\newcommand{\ZZ}{\Bbb Z}
\newcommand{\RR}{\Bbb R}
\newcommand{\TT}{\Bbb T}
\newcommand{\NN}{\Bbb N}
\newcommand{\QQ}{\Bbb Q}
\newcommand{\CC}{\Bbb C}
\newcommand{\ip}[1]{\langle #1 \rangle}
\newcommand{\widetidle}{\widetilde}
\newcommand{\varpesilon}{\varepsilon}
\newcommand{\varespilon}{\varepsilon}
\newcommand{\vaerpsilon}{\varepsilon}
\newcommand{\actson}{\curvearrowright}
\newcommand{\acston}{\curvearrowright}
\newtheorem{question}{Question}
\newtheorem{?}{Question}
\newtheorem{theorem}{Theorem}
\newtheorem{definition}[theorem]{Definition}
\newtheorem{proposition}[theorem]{Proposition}
\newtheorem{cor}[theorem]{Corollary}
\newtheorem{lemma}[theorem]{Lemma}
\DeclareMathOperator{\IE}{IE}
\DeclareMathOperator{\Span}{Span}
\DeclareMathOperator{\Map}{Map}
\DeclareMathOperator{\supp}{supp}
\DeclareMathOperator{\Det}{det}
\DeclareMathOperator{\Hom}{Hom}
\DeclareMathOperator{\Tr}{Tr}
\numberwithin{theorem}{section}
\begin{document}
\begin{titlepage}
\title{ Independence Tuples and Deninger's Problem }        
\author{Ben Hayes}
\affil{\emph{Vanderbilt University}\\
         \emph{Stevenson Center}\\
         \emph{Nashville, TN 37240}\\
         \emph{benjamin.r.hayes@vanderbilt.edu}}
\date{\today}
\maketitle

\begin{abstract} We define  modified versions of the independence tuples for sofic entropy developed in \cite{KerrLi2}. Our first modification uses an $\ell^{q}$-distance instead of an $\ell^{\infty}$-distance. It turns out this produces the same version of independence tuples (but for nontrivial reasons), and this allows one added flexibility. Our second modification considers the ``action" a sofic approximation gives on $\{1,\dots,d_{i}\},$ and forces our independence sets $J_{i}\subseteq\{1,\dots,d_{i}\}$ to be such that $\chi_{J_{i}}-u_{d_{i}}(J_{i})$ (i.e. the projection of $\chi_{J_{i}}$ onto mean zero functions) spans a representation of $\Gamma$ weakly contained in the left regular representation. This modification is motivated by the results in \cite{Me6}. Using both of these modified versions of independence tuples we prove that if $\Gamma$ is sofic, and $f\in M_{n}(\ZZ(\Gamma))\cap GL_{n}(L(\Gamma))$ is not invertible in $M_{n}(\ZZ(\Gamma)),$ then $\Det_{L(\Gamma)}(f)>1.$ This extends a consequence of the work in \cite{Me5} and \cite{KerrLi2} where one needed $f\in M_{n}(\ZZ(\Gamma))\cap GL_{n}(\ell^{1}(\Gamma)).$ As a consequence of our work, we show that if $f\in M_{n}(\ZZ(\Gamma))\cap GL_{n}(L(\Gamma))$ is not invertible in $M_{n}(\ZZ(\Gamma))$ then $\Gamma\actson (\ZZ(\Gamma)^{\oplus n}/\ZZ(\Gamma)^{\oplus n}f)^{\widehat{}}$ has completely positive topological entropy with respect to any sofic approximation.\end{abstract}

\textbf{Keywords:} sofic groups, independence tuples, completely positive entropy, Fuglede-Kadsion determinants.

\textbf{MSC:} 37B40, 47A67, 22D25
\tableofcontents

\end{titlepage}

\section{Introduction}

	This paper is concerned with a modification of  independence tuples in the case of sofic topological entropy due to Kerr-Li in \cite{KerrLi2}. We remark that the definition of sofic topological entropy is due to Kerr-Li in \cite{KLi}, following on the seminal work of Bowen on sofic measure-theoretic entropy  in \cite{Bow}. Independence tuples were first developed in \cite{KerrLiIndepened} for actions of amenable groups. Positivity of topological entropy is equivalent to having a nondiagonal independence pair, and this can be viewed as a topological version of the fact that a measure-preserving action of an amenable group must have a weakly mixing factor. Using independence tuples, Kerr-Li showed that if a topological action has positive entropy, then the action must exhibit some chaotic behavior (see e.g. \cite{KerrLi2} Theorem 8.1). Let us briefly mention the combinatorial version of independence. We say that a tuple $(A_{i,1},\dots,A_{i,k})_{i\in J}$ of subsets of a set $A$ are independent, if  for every $c\colon J\to \{1,\dots,k\}$ we have
\[\bigcap_{i\in J}A_{i,c(i)}\ne \varnothing.\]
The name coming from the case when $(A_{i,1},\dots,A_{i,k})_{i\in J}$ are (probabilistically) independent partitions in a probability space. If $\Gamma$ is a countable discrete group acting on a set $X,$ and $(A_{1},\dots,A_{k})$ are subsets of $X,$ we  call a finite $J\subseteq \Gamma$ an independence set for $(A_{1},\dots,A_{k})$ if $\{(s^{-1}A_{1},\dots,s^{-1}A_{k})\}_{s\in J}$ is independent. When $\Gamma$ is amenable and $F\subseteq\Gamma$ is finite, we let  $\phi_{A}(F)$ is the maximal cardinality of a subset of $F$ which is an independent set for $A.$ We can then define the independence density of $A=(A_{1},\dots,A_{k}),$ denoted $I(A),$  to be the limit  of $\frac{\phi_{A}(F_{n})}{|F_{n}|}$ where  $F_{n}$ is a F\o lner sequence. In the case $X$ is compact and the action is by homeomorphisms, we say that a tuple $x=(x_{1},\dots,x_{k})$ is an independence tuple if every tuple $U=(U_{1},\dots,U_{k})$ where $U_{j}$ is a neighborhood of $x_{j}$ we have $I(U)>0.$ This definition is due to Kerr-Li in \cite{KerrLiIndepened}. 

	To generalize to the case of sofic groups (defined in the next section), Kerr-Li considered a sofic approximation (again we define this in the next section)
\[\sigma_{i}\colon\Gamma\to S_{d_{i}}\]
and abstracted the internal independent subsets of $\Gamma$ considered in the amenable case to  \emph{external} independent subsets of $\{1,\dots,d_{i}\}$ in the sofic case. In this manner they defined what it means for a tuple $(x_{1},\cdots,x_{k})\in X^{k}$ to be an independence tuple for the action $\Gamma\actson X$ with respect to some fixed sofic approximation $\sigma_{i}\colon\Gamma\to S_{d_{i}}.$ Moreover, they showed that $\Gamma\actson X$ has positive entropy with respect to $\sigma_{i}\colon\Gamma\to S_{d_{i}}$ if and only if there is a nondiagonal  independence pair in $X^{2}.$ This can be viewed as a topological version of the fact that a probability measure-preserving action with positive entropy must have a weakly mixing factor.

We give two alternate  versions of an independence tuple for actions of sofic groups. For the first it is useful to rephrase the definition in terms of metrics. Let $\rho$ be a compatible metric on $X.$ The condition
\[\bigcap_{g\in J}g^{-1}U_{c(g)}\ne\varnothing\]
can be replaced by the similar condition that there is a $x\in X$ so that
\[\max_{g\in J}\rho(gx,gx_{c(s)})<\varepsilon.\]
Equivalently, for $1\leq p\leq\infty,$ let us define $\rho_{p}$ on $X^{J}$ by
\[\rho_{p,J}(x,y)^{p}=\frac{1}{|J|}\sum_{g\in J}\rho(x(g),y(g))^{p}\mbox{ if $p<\infty$}\]
\[\rho_{\infty,J}(x,y)=\sup_{j\in J}\rho(x(j),y(j)).\]
Then we are considering the condition that
\[\rho_{\infty,J}(O(x),x_{c(\cdot)})<\varepsilon\]
where $O\colon X\to X^{J}$ is defined by $O(x)(g)=gx.$  One can rephrase the sofic version of independence sets in terms of a similar $\ell^{\infty}$-product metric.  We define an a priori different version of independent set using an $\ell^{p}$-product metric. This is a priori weaker than the $\ell^{\infty}$-product version, however by an application of the Sauer-Shelah theorem we can show that they are equivalent. While it appears that we have thus accomplished nothing, this actually gives us an added degree of flexibility as the $\ell^{2}$-product metric will be more useful to us. The technique of using $\ell^{p}$-metrics instead of $\ell^{\infty}$-metrics was first used by  Li in \cite{Li2}. We believe this is a very important technique, which often gives one added flexibility needed to prove results in entropy theory. We mention that we have already exploited this in \cite{Me4},\cite{Me5},\cite{Me6}. It is quite useful when one wishes to apply Hilbert space techniques as these are phrased better in terms of the $\ell^{2}$-product metric. This is precisely the purpose of their use in \cite{Me4},\cite{Me5},\cite{Me6} and we believe this is crucial for those results, as well as the results in this paper.

 The second version of independence tuples is one in which we control the  translates of an independence set $J$ by the left regular representation (in a sense to be made more precise in Section \ref{S:WCC}), and moreover only consider partitions
\[c\colon J\to \{1,\dots,k\}\]
where each of the pieces $c^{-1}(\{l\}),1\leq l\leq k$ also has its translates controlled by the left regular representation (again this will be made more precise later). To briefly describe the idea, consider a measure-preserving action $\Gamma\actson (X,\mu)$.  Given a set $A\subseteq X$ we can consider the subspace of $L^{2}(X,\mu)$ given by
\[\mathcal{H}_{A}=\overline{\Span\{g(\chi_{A}-\mu(A)1):g\in\Gamma\}}^{\|\cdot\|_{2}}.\]
One can then ask for sets where
\[\Gamma\actson\mathcal{H}_{A}\]
is related to representations one is more familiar with, and this provides interesting restrictions of the translates of $A$ by $\Gamma.$ For example, one could consider $A$ where $\Gamma\actson \mathcal{H}_{A}$ extends to the reduced group $C^{*}$-algebra (this is the completion of the group algebra in the left regular representation). Equivalently, for all $f\in c_{c}(\Gamma)$ we have
\[\left\|\sum_{g\in\Gamma}f(g)(\chi_{gA}-\mu(A)1)\right\|_{2}\leq \left\|\sum_{g\in\Gamma}f(g)\lambda(g)\right\|\|\chi_{A}-\mu(A)1\|_{2},\]
where $\lambda$ is the left regular representation. This says nothing in the amenable case, but in the non-amenable case implies some mixing behavior of $A.$ For example, if every measurable $A\subseteq X$ has this property and $\Gamma$ is non-amenable then the action is strongly ergodic. Based on this idea, we give a version of independence tuples, called  independence tuples satisfying the weak containment condition, where the ``representation" (via the sofic approximation) on the independence sets in question is weakly contained in the left regular representation. Since the sofic approximation is not actually a representation, we mention for clarity that we will require our independence sets to be sequences $(J_{i})_{i\geq 1}$ of subsets  of $\{1,\dots,d_{i}\}$ so that for all $f\in \CC(\Gamma),\eta>0$ we have
\[\|\sigma_{i}(f)(\chi_{J_{i}}-u_{d_{i}}(J_{1})1)\|_{2}\leq \|\lambda(f)\|\|\chi_{J_{i}}-u_{d_{i}}(J_{i})1\|_{2}+\eta\]
 for all large $i.$ Moreover, we require that the partitions
\[c\colon J_{i}\to \{1,\dots,d_{i}\}\]
are such that the pieces $c^{-1}(\{l\})$ also exhibit similarly controlled behavior by the left regular representation (albeit in a more finitary sense). Theorem 1.1 and Corollary 1.4 in \cite{Me6} indicate that the left regular representation plays a crucial role in entropy theory, and from this our strengthening of independence tuples is  natural.

 	 A priori, this different version of an independence tuple bears no relation to independence tuples developed by Kerr-Li, as we are requiring a stronger condition on the structure of the independent set but also considering less general partitions. However, using a probabilistic argument and the Sauer-Shelah Lemma we show that every  independence tuple satisfying the weak containment condition is an independence tuple. It turns out (not  surprisingly) that in the amenable case, independence tuples are  independence tuples satisfying the weak containment condition.  It is possible that independence tuples are independence tuples satisfying the weak containment condition for sofic groups, but it is not clear how one would prove this. However, we strongly believe that positivity of topological entropy is equivalent to the existence of a nondiagonal  independence pair satisfying the weak containment condition. This would be not only an analogue of  Proposition 4.16 (3) of \cite{KerrLi2}, but an  analogue of our recent results in \cite{Me6}, where it is shown (see Theorem 1.1 of \cite{Me6}) that the Koopman representation of a probability measure-preserving action with positive entropy must contain a nonzero subrepresentation of the left regular representation. The major application in our paper of independence tuples is the following question of Deninger (see \cite{DenQuest}, question 26).

\begin{question}If $\Gamma$ is a countable discrete group and $f\in M_{n}(\ZZ(\Gamma))\cap GL_{n}(\ell^{1}(\Gamma))$ which is not invertible in $M_{n}(\ZZ(\Gamma)),$  is it true that $\Det_{L(\Gamma)}(f)>1$?\end{question}

 Here $L(\Gamma)$ denotes the group von Neumann algebra, which is the strong operator topology closure of $\CC(\Gamma)$ in the left regular representation on $\ell^{2}(\Gamma)$ given by
 \[(g\xi)(h)=\xi(g^{-1}h),\mbox{ $g,h\in\Gamma.$}\]
Also $\Det_{L(\Gamma)}(f)$ is the Fuglede-Kadison determninant of $f,$ a generalization of the usual determinant in linear algebra to the infinite-dimensional setting of  operators in $M_{n}(L(\Gamma))$ see \cite{Luck} Chapter 3.2 for the precise definition. Chung-Li answered this affirmatively in Corollary 7.9 of \cite{ChungLi}  for  all amenable groups using  independence tuples. Following on the techniques in \cite{ChungLi}, David Kerr and Hanfeng Li in \cite{KerrLi2} were able to answer this in the affirmative when $\Gamma$ is residually finite. Both of these proofs use   independence tuples and their previous calculations of topological entropy for algebraic actions of residually finite groups or amenable groups. This was further exploited by Chung-Li in \cite{ChungLi} to describe algebraic actions of amenable groups with completely positive entropy. Using the main result of \cite{Me5}, and Theorem 6.8 in \cite{KerrLi2} one immediately affirmatively answers Deninger's Problem for sofic groups. However, we will be interested in generalizing this result to a larger class of $f.$ We will weaken the assumption that $f\in GL_{n}(\ell^{1}(\Gamma)).$

	To motivate our generalization, let  us  consider the case  $\Gamma=\ZZ,$ and $f\in \ZZ(\ZZ),$ and view $f$ as a Laurent polynomial. By Jensen's Formula, one can show that $\det_{L(\ZZ)}(f)>1$ if and only if $f$ has a leading coefficient of modulus one and does not have all of its roots on the unit circle. Using Fourier analysis, we see that $f$ is invertible in $\ell^{1}$ if and only if  $f$ never vanishes on  the unit circle. In particular, if $f$ is invertible in $\ell^{1}$ then $\det_{L(\ZZ)}(f)>1.$ This analysis also generalizes to any abelian group.

We note here that  the Gelfand transforms on $\ell^{1}(\ZZ)$ and $C^{*}_{\lambda}(\ZZ)$ of $f$ are both the Fourier transform, so $f$ is invertible in $\ell^{1}(\ZZ)$ if and only if $f$ is invertible in $C^{*}_{\lambda}(\ZZ)$ (equivalently $L(\ZZ)$). Consideration of the abelian case leads us to believe that it is  reasonable to expect that if $f\in M_{n}(\ZZ(\Gamma))\cap GL_{n}(L(\Gamma))$ is not invertible in $M_{n}(\ZZ(\Gamma)),$ then $\Det_{L(\Gamma)}(f)>1.$ We prove this is true in the sofic case.

\begin{theorem}\label{T:InvertGVNA} Let $\Gamma$ be a countable discrete sofic group, and $f\in M_{n}(\ZZ(\Gamma))\cap GL_{n}(L(\Gamma)).$ If $f$ is not invertible in $M_{n}(\ZZ(\Gamma)),$ then $\Det_{L(\Gamma)}(f)>1.$ \end{theorem}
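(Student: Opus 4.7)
Assume for contradiction that $f\in M_n(\ZZ(\Gamma))\cap GL_n(L(\Gamma))$ is not invertible in $M_n(\ZZ(\Gamma))$ yet $\Det_{L(\Gamma)}(f)\leq 1$. The plan is to exhibit a nondiagonal independence pair satisfying the weak containment condition for the algebraic action $\Gamma\actson X_f$, where $X_f=(\ZZ(\Gamma)^{\oplus n}/\ZZ(\Gamma)^{\oplus n}f)^{\widehat{}}$. Such a pair will force positive topological sofic entropy; combined with an entropy formula of the shape $h_{\sigma}(X_f)=\log\Det_{L(\Gamma)}(f)$, which I expect to extend \cite{Me5} to all $f\in GL_n(L(\Gamma))$ via the $\ell^q$-version of independence tuples developed earlier in this paper, this will give the desired contradiction.

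Since $f$ is not invertible over $\ZZ(\Gamma)$, pick $v\in \ZZ(\Gamma)^{\oplus n}\setminus \ZZ(\Gamma)^{\oplus n}f$. Pontryagin duality turns $v$ into a nonzero continuous character $\ev_v\colon X_f\to\TT$, so there exist $\xi_0,\xi_1\in X_f$ with $\ev_v(\xi_0)\ne\ev_v(\xi_1)$. I then aim to show that $(\xi_0,\xi_1)$ satisfies the weak containment independence condition with respect to a sofic approximation $\sigma_i\colon\Gamma\to S_{d_i}$. The microstate construction proceeds as follows: for each $i$, choose $J_i\subseteq\{1,\dots,d_i\}$ of density bounded away from $0$ such that $\chi_{J_i}-u_{d_i}(J_i)1$ approximately satisfies the weak containment condition with respect to $\lambda$, and for each admissible partition $c\colon J_i\to\{0,1\}$ whose pieces obey the finitary weak containment condition demanded by the definition, build a microstate $\phi_c$ with $\phi_c(j)$ close to $\xi_{c(j)}$ in the $\ell^2$-product metric on $J_i$. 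Invertibility of $f$ in $L(\Gamma)$ enters to approximately invert $\sigma_i(f)$ in operator norm on $\ell^2(d_i)^{\oplus n}$, enabling one to solve $\sigma_i(f)\phi_c\approx y_c$ in $\ell^2$ for a suitable target $y_c$ assembled from the data $\xi_{c(j)}$. The $\ell^2$-formulation of the independence definition, shown equivalent to the $\ell^\infty$ one earlier in the paper, is what legitimises these Hilbert space arguments throughout.

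The main obstacle I foresee is coordinating the two structural constraints during the construction of $\phi_c$: the pieces $c^{-1}(\{0\})$ and $c^{-1}(\{1\})$ must themselves satisfy the finitary weak containment condition, and the approximate inversion by $\sigma_i(f)^{-1}$ must preserve both $\ell^2$-closeness to the target values and membership in the class of admissible microstates, uniformly in $c$. In the $\ell^1(\Gamma)$ regime treated in \cite{ChungLi,KerrLi2} one can approximate $f^{-1}$ by finitely supported elements of $\CC(\Gamma)$, which makes such extension routine; replacing $\ell^1$-invertibility by $L(\Gamma)$-invertibility loses this polynomial approximation, and the weak containment condition appears to be the correct substitute, restricting attention to partitions whose spectral geometry matches that of the left regular representation, on which $\sigma_i(f)^{-1}$ is well-behaved. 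Once such a nondiagonal pair is produced, the entropy formula forces $\Det_{L(\Gamma)}(f)>1$; and because the same construction should work for any two distinct points of $X_f$, one additionally obtains completely positive topological entropy, matching the statement advertised in the abstract.
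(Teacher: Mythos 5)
Your overall strategy matches the paper's: exhibit nondiagonal IE pairs of $X_f$ satisfying the weak containment condition, deduce positive sofic topological entropy via Proposition 4.16(3) of \cite{KerrLi2}, and combine with the entropy formula $h_{(\sigma_i)_i}(X_f,\Gamma)=\log\Det_{L(\Gamma)}(f)$. Two points need correction, one minor and one substantive.

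The minor point: you write that you ``expect to extend \cite{Me5}'' so the entropy formula covers all $f\in GL_n(L(\Gamma))$. No extension is needed --- Theorem 4.4 of \cite{Me5} already gives $h_{(\sigma_i)_i}(X_f,\Gamma)=\log\Det_{L(\Gamma)}(f)$ for every $f\in GL_n(L(\Gamma))$ (indeed for $f$ merely injective on $\ell^2(\Gamma)^{\oplus n}$). The entire burden of the present paper is on the independence-tuple side, not on the entropy computation.

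The substantive gap is in your microstate mechanism. You propose to ``approximately invert $\sigma_i(f)$ in operator norm on $\ell^2(d_i)^{\oplus n}$.'' This is not viable: $\sigma_i$ is only an approximate homomorphism, so invertibility of $f$ in $L(\Gamma)$ gives no control on $\sigma_i(f)\sigma_i(f^{-1})$, and in fact there is no reason for the integer matrix $\sigma_i(f)$ to be invertible at all, let alone with a uniformly bounded inverse. The paper never inverts $\sigma_i(f)$. Instead, take $\phi=f^{-1}\in M_n(L(\Gamma))$ and finitely supported $\phi_\delta\in M_n(\RR(\Gamma))$ with $\|\phi_\delta-\phi\|_\infty<\delta$ in the $L(\Gamma)$ operator norm (such approximation exists by density of $\CC(\Gamma)$ in $L(\Gamma)$ in the strong topology, hence in $\|\cdot\|_2$, combined with Kaplansky density). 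One then defines the candidate microstate directly as $\xi_\delta=\sum_s\sigma_i(\phi_\delta\alpha_s^*)\chi_{J_i^{(s)}}$, where the $\alpha_s\in\ZZ(\Gamma)^{\oplus n}$ parametrize a dense set of points of $X_f$ with $t(\alpha_s)=0$, and shows $\xi_\delta\in\Xi_{O(\delta)}(\sigma_i(f))$, i.e.\ $\sigma_i(f)\xi_\delta$ is $O(\delta)$-close to $(\ZZ^{d_i})^{\oplus n}$. The cancellation uses $t(\alpha_s)=0$ to rewrite errors as $\sigma_i\bigl((f\phi_\delta-1)\alpha_s^*\bigr)$ applied to the mean-zero vectors $\chi_{J_i^{(s)}}-u_{d_i}(J_i^{(s)})1$, and \emph{that} is where the weak containment condition enters: $J_i^{(s)}\in\Lambda(E,\eta,\sigma_i)$ allows bounding
\[\bigl\|\sigma_i\bigl((f\phi_\delta-1)\alpha_s^*\bigr)\bigl(\chi_{J_i^{(s)}}-u_{d_i}(J_i^{(s)})1\bigr)\bigr\|_2\]
by $\|\lambda\bigl((f\phi_\delta-1)\alpha_s^*\bigr)\|\,\|\chi_{J_i^{(s)}}-u_{d_i}(J_i^{(s)})1\|_2+\eta \leq \delta\|f\|_\infty\|\alpha_s\|_\infty\|\chi_{J_i^{(s)}}\|_2+\eta$. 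So the weak containment condition is a condition on the mean-zero indicator vectors, not on the spectral behavior of $\sigma_i(f)$; it converts operator-norm smallness of $f\phi_\delta-1$ in $L(\Gamma)$ into $\ell^2$-smallness of the error on those specific vectors, which is precisely the replacement for the $\ell^1$-Wiener argument used in the $GL_n(\ell^1(\Gamma))$ case. One also needs Lemma 4.8 to produce $J_i\in\Lambda_{(\sigma_i)_i}$ whose translates by a large finite $K\subseteq\Gamma$ are disjoint, so that evaluating $\sigma_i(g)\xi_\varepsilon$ at points of $J_i^{(s)}$ recovers the Fourier coefficient $\widehat{\alpha_s\phi_\varepsilon^*}(g)$.
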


For readers unfamiliar with operator algebras, we note that $f\in GL_{n}(L(\Gamma))$ is the same as saying that $f$ is invertible as a left convolution operator
\[\ell^{2}(\Gamma)^{\oplus n}\to \ell^{2}(\Gamma)^{\oplus n}.\]
We also mention in Section 4 a wide class of examples of $f\in \ZZ(\Gamma)\cap L(\Gamma)^{\times}$ as to illustrate that the above Theorem is a significant generalization of  the case $f\in \ZZ(\Gamma)\cap \ell^{1}(\Gamma)^{\times}.$ We actually prove the above Theorem by  using our results in \cite{Me5}. For notation, if $f\in M_{n}(L(\Gamma))$ we define
\[r(f)\colon \ell^{2}(\Gamma)^{\oplus n}\to \ell^{2}(\Gamma)^{\oplus n}\]
by
\[(r(f)\xi)(l)=\sum_{m=1}^{n}\sum_{g\in\Gamma}\xi(l)(g)\widehat{f_{lm}}(g)\]
if $f_{lm}=\sum_{g\in\Gamma}\widehat{f_{lm}}(g)g$ for $1\leq l,m\leq n.$ We then set
\[X_{f}=(\ZZ(\Gamma)^{\oplus n}/r(f)\ZZ(\Gamma)^{\oplus n})^{\widehat{}}.\]
Where the hat indicates that we are taking the Pontryagin dual, i.e we look at the compact, abelian group of al continuous homomorphisms from $\ZZ(\Gamma)^{\oplus n}/r(f)\ZZ(\Gamma)^{\oplus n}$ into $\TT=\RR/\ZZ$. Here we are identifying $\ZZ(\Gamma)$ inside of $\ell^{2}(\Gamma)$ via
\[\sum_{g\in\Gamma}\widehat{f}(g)g\mapsto \sum_{g\in\Gamma}\widehat{f}(g)\chi_{\{g\}}.\]
The compact, abelian group $X_{f}$ inherits a natural action of $\Gamma$ by 
\[(g\theta)(a)=\theta(g^{-1}a),\mbox{ for $\theta\in X_{f},a\in \ZZ(\Gamma)^{\oplus n}/r(f)\ZZ(\Gamma)^{\oplus n},g\in\Gamma.$}\]
The proof of Theorem \ref{T:InvertGVNA} then follows from the main result of \cite{Me5}, the following Theorem and the results of \cite{KerrLi2}.

\begin{theorem} Let $\Gamma$ be a countable discrete sofic group with sofic approximation $\sigma_{i}\colon\Gamma\to S_{d_{i}},$ and let $f\in M_{n}(\ZZ(\Gamma))\cap GL_{n}(L(\Gamma)).$ Then every $k$-tuple of points in $X_{f}$ is a $(\sigma_{i})_{i}-\IE-k$-tuple.\end{theorem}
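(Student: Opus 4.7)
The plan is to prove the stronger statement that every $k$-tuple in $X_{f}^{k}$ is an $\IE$-$k$-tuple satisfying the weak containment condition developed in the paper, and then invoke the implication (proved in the earlier sections) from weak-containment $\IE$-tuples to plain $\IE$-tuples. I will work throughout with the $\ell^{2}$-version of the definition, which by the Sauer-Shelah argument referenced in the introduction is equivalent to the classical $\ell^{\infty}$-version but is much more amenable to Hilbert space techniques.

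Fix $(x_{1},\dots,x_{k})\in X_{f}^{k}$ and open neighborhoods $V_{l}$ of $x_{l}$; choose lifts $\widetilde{x}_{l}\in \TT^{n}$. I seek sequences $J_{i}\subseteq\{1,\dots,d_{i}\}$ satisfying the weak containment condition, of uniformly positive lower density, such that for every partition $c\colon J_{i}\to\{1,\dots,k\}$ whose preimages $c^{-1}(\{l\})$ also satisfy the finitary weak containment condition, there exists a microstate $\phi_{i}$ for $\Gamma\actson X_{f}$ with $\phi_{i}(j)$ close to $\widetilde{x}_{c(j)}$ for all $j\in J_{i}$. Existence of such $J_{i}$, i.e.\ of subsets of $\{1,\dots,d_{i}\}$ whose mean-zero characteristic functions, via the sofic approximation, weakly contain the left regular representation, is the kind of spectral abundance result whose prototypes appear in \cite{Me6}.

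To construct $\phi_{i}$ for a given $c$, define the target function $\psi_{i}\colon\{1,\dots,d_{i}\}\to\TT^{n}$ by $\psi_{i}(j)=\widetilde{x}_{c(j)}$ on $J_{i}$ and $\psi_{i}(j)=0$ off $J_{i}$. Since $X_{f}$ is the Pontryagin dual of $\ZZ(\Gamma)^{\oplus n}/r(f)\ZZ(\Gamma)^{\oplus n}$, being a microstate (in the $\ell^{2}$-sense) translates to the condition that $\sigma_{i}(r(f))\psi_{i}$ is close to $\ZZ^{n}$-valued in $\ell^{2}$. Invertibility of $r(f)$ in $M_{n}(L(\Gamma))$ together with the main theorem of \cite{Me5} supplies an approximate inverse to $\sigma_{i}(r(f))$ on the sofic model with $\ell^{2}$-norm controlled by $\|r(f)^{-1}\|_{L(\Gamma)}$. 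Applying this inverse to the $\ZZ^{n}$-valued error yields a correction $\xi_{i}$ so that $\phi_{i}=\psi_{i}-\sigma_{i}(r(f))\xi_{i}\pmod{\ZZ^{n}}$ is an honest microstate. Shrinking the $V_{l}$ and refining the sofic approximation makes $\xi_{i}$ small in $\ell^{2}$, and then the weak containment condition on $J_{i}$ and the pieces of $c$ is used to upgrade $\ell^{2}$-smallness to $\ell^{\infty}$-smallness on a density-one subset of $J_{i}$, producing the required independent subset of positive density.

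The main obstacle is exactly this final upgrade from $\ell^{2}$- to $\ell^{\infty}$-control on $\xi_{i}$, and this is precisely what the weak containment condition is designed to deliver: it lets one use the $C^{*}$-operator norm $\|\lambda(f)\|$ to control $\|\sigma_{i}(f)(\chi_{J_{i}}-u_{d_{i}}(J_{i})1)\|_{2}$, and iterated over the pieces of the partition this turns the $L(\Gamma)$-invertibility of $r(f)$ (an $\ell^{2}$-statement) into uniform $\ell^{\infty}$ control away from a density-zero subset. This is the step that the earlier work of Kerr-Li and Chung-Li, working with plain $\IE$-tuples in the $\ell^{\infty}$-metric, could not perform, and is why those arguments required the much stronger hypothesis $f\in GL_{n}(\ell^{1}(\Gamma))$ — under which an $\ell^{\infty}$-bounded approximate inverse of $\sigma_{i}(f)$ exists directly.
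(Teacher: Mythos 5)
Your top-level plan---prove the stronger statement that $X_{f}^{k}=\IE^{\Lambda,k}_{(\sigma_{i})_{i}}(2,X_{f},\Gamma)$ and then invoke Proposition \ref{P:regularindependence} and the $\ell^{2}\leftrightarrow\ell^{\infty}$ equivalence of Lemma \ref{L:whatq?}---is exactly the paper's plan. But the microstate construction at the heart of your argument does not work. The ``approximate inverse to $\sigma_{i}(r(f))$ with $\ell^{2}$-norm controlled by $\|r(f)^{-1}\|_{L(\Gamma)}$'' does not exist: a sofic approximation is not a representation, and $\sigma_{i}(f)\in M_{nd_{i}}(\CC)$ can have arbitrarily small singular values even though $f\in GL_{n}(L(\Gamma))$---this is exactly the phenomenon the Fuglede--Kadison determinant records, and it is what \cite{Me5} computes rather than what it circumvents. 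Moreover the claim that $\sigma_{i}(r(f))\psi_{i}$ is close to $\ZZ^{n}$-valued is unsupported: the $\widetilde{x}_{l}=x_{l}(\cdot)(e)\in\TT^{n}$ are merely the values at the identity, and the constraint $x_{l}\in X_{f}$ is a constraint on the whole $\Gamma$-orbit, so there is nothing forcing $\sigma_{i}(f)\bigl(\sum_{l}\widetilde{x}_{l}\chi_{J_{i}^{(l)}}\bigr)$ to be near integer-valued. Finally, the weak containment condition is not what upgrades $\ell^{2}$-control to $\ell^{\infty}$-control---that upgrade is the Sauer--Shelah Lemma \ref{L:GSS}, used in Lemma \ref{L:whatq?} and Proposition \ref{P:regularindependence}; the weak containment condition only supplies the bound $\|\sigma_{i}(\beta)(\chi_{J}-u_{d_{i}}(J)1)\|_{2}\leq\|\lambda(\beta)\|\|\chi_{J}-u_{d_{i}}(J)1\|_{2}+\eta$ for a fixed finite $\beta\in\CC(\Gamma)$, and only on \emph{mean-zero} vectors.

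What the paper does instead is never attempt to invert $\sigma_{i}(f)$. First a density lemma shows that tuples of the form $(Q(\alpha_{1}\phi^{*}),\dots,Q(\alpha_{k}\phi^{*}))$ with $\phi=f^{-1}$, $\alpha_{s}\in\ZZ(\Gamma)^{\oplus n}$, and (crucially) $t(\alpha_{s})=0$, are dense in $X_{f}^{k}$; by Proposition \ref{P:permanence}(2) one may assume the given tuple has this form. Lemma \ref{L:Bernoulliembedding} (modelling a Bernoulli shift on the sofic approximation) then produces $(J_{i})_{i}\in\Lambda_{(\sigma_{i})_{i}}$ of positive density whose translates $\sigma_{i}(x)J_{i}$ for $x$ in a large finite $K\supseteq\supp(\phi_{\delta}\alpha_{s}^{*})$ are disjoint---a requirement you omit, and which is essential to evaluate $\sigma_{i}(g)\xi$ on $J_{i}$ exactly and verify that the constructed microstate matches the target $\widehat{\alpha_{c(j)}\phi^{*}}(g)$. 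The microstate is then built \emph{directly} as $\xi_{\delta}=\sum_{s}\sigma_{i}(\phi_{\delta}\alpha_{s}^{*})\chi_{J_{i}^{(s)}}$ for a finitely supported $\phi_{\delta}\approx\phi$; the condition $t(\alpha_{s})=0$ makes $\sigma_{i}(\alpha_{s}^{*})1=0$ and so lets one pass from $\chi_{J_{i}^{(s)}}$ to its mean-zero part, at which point $J_{i}^{(s)}\in\Lambda(E,\eta,\sigma_{i})$ bounds $\|\sigma_{i}((f\phi_{\delta}\alpha_{s}^{*}-\alpha_{s}^{*})_{lp})(\chi_{J_{i}^{(s)}}-u_{d_{i}}(J_{i}^{(s)})1)\|_{2}$ via $\|\lambda(\cdot)\|$, which is small because $\phi_{\delta}\approx\phi$ in $L(\Gamma)$-norm. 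This is how $L(\Gamma)$-invertibility of $f$ enters---through forward estimates on fixed elements of $\CC(\Gamma)$ applied to mean-zero characteristic functions, not through any inverse of $\sigma_{i}(f)$.
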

By  Theorem 1.1 of \cite{Me5} as well as Theorem 6.8 in \cite{KerrLi2} the above Theorem implies Theorem \ref{T:InvertGVNA}.
Crucial in the proof of this theorem is both the reduction to $\ell^{2}$-independence tuples and  independence tuples satisfying the weak containment condition. If $J_{i}\subseteq\{1,\dots,d_{i}\}$ is our candidate independent set and
\[J_{i}=J_{i}^{(1)}\cup \cdots \cup J_{i}^{(k)}\]
is our candidate partition, we will need to control
\[\|\sigma_{i}(\alpha)(\chi_{J_{i}^{(s)}}-u_{d_{i}}(J_{i}^{(s)})1)\|_{2}\]
for $\alpha\in\CC(\Gamma),1\leq s\leq k.$  In particular, since we only assume that $f\in GL_{n}(L(\Gamma))$ we need to control by the norm of $\alpha$ in the left regular representation. Because of this, our modified notion of independence will be the key to proving the above theorem. Thus, we will actually show the more general fact that every $k$-tuple of points in $X_{f}$ is a independence tuple satisfying the weak containment condition.

As a consequence of our work  we have the following application to \emph{completely positive entropy}. Recall that if $\Gamma$ is a sofic group, with sofic approximation $\sigma_{i}\colon\Gamma\to S_{d_{i}},$ then $\Gamma\actson X$ where $X$ is a compact metrizable space, and $\Gamma$ acts by homeomorphisms is said to have \emph{completely positive entropy} if every nontrivial factor has positive entropy. Similarly, a probability measure-preserving action is said to have \emph{completely positive entropy} if every nontrivial (measure-theoretic) factor has positive entropy.
\begin{cor} Let $\Gamma$ be a countable discrete sofic group with sofic approximation $\sigma_{i}\colon\Gamma\to S_{d_{i}}.$ Let $f\in M_{n}(\ZZ(\Gamma))\cap GL_{n}(L(\Gamma))$ be not invertible in $M_{n}(\ZZ(\Gamma)).$ Then $\Gamma\actson X_{f}$ has completely positive topological entropy with respect to any sofic approximation. If $\Gamma$ is amenable and $\lambda_{X_{f}}$ is the Haar measure on $X_{f},$ then $\Gamma\actson (X_{f},\lambda_{X_{f}})$ has completely positive entropy as well.
\end{cor}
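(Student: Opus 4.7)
The plan is to deduce both statements from the preceding theorem, which asserts that every $k$-tuple in $X_f$ is a $(\sigma_i)_i$-$\IE$-$k$-tuple. Nothing further is needed on the independence-tuple side; the work is in feeding this into the standard machinery.

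For the topological statement, the key input is the functoriality of $\IE$-tuples under equivariant continuous factor maps: if $\pi\colon X_f\to Y$ is a $\Gamma$-equivariant surjection and $(x_1,\dots,x_k)$ is a $(\sigma_i)_i$-$\IE$-$k$-tuple in $X_f$, then $(\pi(x_1),\dots,\pi(x_k))$ is a $(\sigma_i)_i$-$\IE$-$k$-tuple in $Y$, which is part of Proposition~4.16 of \cite{KerrLi2}. So let $Y$ be any nontrivial topological factor of $X_f$, pick distinct points $y_1\ne y_2\in Y$, and lift them to $x_1,x_2\in X_f$. By the preceding theorem $(x_1,x_2)$ is an $\IE$-pair, hence $(y_1,y_2)$ is a nondiagonal $\IE$-pair in $Y$, and by Proposition~4.16(3) of \cite{KerrLi2} this forces $h_{(\sigma_i)_i}(Y)>0$. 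Thus $\Gamma\actson X_f$ has completely positive topological entropy.

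For the measure-theoretic statement in the amenable case, one invokes three facts about algebraic actions $\Gamma\actson X_f$ for amenable $\Gamma$: (i) topological entropy coincides with measure-theoretic entropy with respect to the Haar measure $\lambda_{X_f}$, and the same equality holds for every algebraic factor $X_f/K$ (with $K$ a closed $\Gamma$-invariant subgroup), a result of Deninger–Li; (ii) the same equalities pass to any algebraic factor, so positive topological entropy of $X_f/K$ gives positive entropy of $(X_f/K,\lambda_{X_f/K})$; and (iii) by the amenable machinery developed in \cite{ChungLi}, the Pinsker $\sigma$-algebra of $(X_f,\lambda_{X_f},\Gamma)$ is itself algebraic, i.e.\ arises as the pullback of $X_f/K$ for some closed $\Gamma$-invariant subgroup $K\subseteq X_f$. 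If $K\ne X_f$, the quotient $X_f/K$ is a nontrivial topological factor, which by the first half of the corollary has positive topological entropy, hence positive measure entropy by (i)–(ii); this contradicts the Pinsker property of $K$. Therefore $K=X_f$, the Pinsker factor is trivial, and $(X_f,\lambda_{X_f})$ has completely positive entropy.

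The main obstacle is really only bookkeeping: the topological half is essentially immediate from the preceding theorem together with the pushforward property of $\IE$-tuples, while the measure-theoretic half rests squarely on the \cite{ChungLi} identification of the Pinsker factor of an algebraic amenable action as an algebraic factor. The subtle point—the one place a novice could go wrong—is that measure-theoretic factors of $(X_f,\lambda_{X_f})$ are \emph{a priori} much richer than algebraic factors, so one cannot apply the topological CPE conclusion directly to an abstract measurable factor; the Chung–Li algebraicity of the Pinsker factor is precisely what bridges this gap.
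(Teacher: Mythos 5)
Your proposal is correct and follows essentially the same route as the paper: both halves reduce to the preceding theorem (every $k$-tuple in $X_f$ is a $(\sigma_i)_i$-$\IE$-$k$-tuple), with the topological statement obtained by pushing $\IE$-pairs through factor maps and invoking Proposition~4.16(3) of \cite{KerrLi2}, and the measure-theoretic statement obtained from the Chung--Li machinery (the paper cites their Corollary~8.4 as a black box, while you unpack it via the algebraicity of the Pinsker factor of an algebraic action of an amenable group, which is where that corollary comes from). The only cosmetic difference is that the paper pushes forward the $\Lambda$-version of $\IE$-tuples using its own Proposition~\ref{P:permanence}(3) before converting to ordinary $\IE$-tuples, whereas you convert first and then use Kerr--Li's permanence directly; the two orderings are equivalent.
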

The amenable case uses important results from \cite{ChungLi}. For $f\in M_{n}(\ZZ(\Gamma))\cap GL_{n}(\ell^{1}(\Gamma)),$ the case of topological entropy  is a consequence of the results in \cite{KerrLi2}. The case of amenable groups and measure-theoretic entropy is contained in the results of \cite{ChungLi}, again in the situation in which $f\in M_{n}(\ZZ(\Gamma))\cap GL_{n}(\ell^{1}(\Gamma)).$ In Section 4, we will list examples of $f\in \ZZ(\Gamma)\cap L(\Gamma)^{\times}$ which are not $\ell^{1}(\Gamma)^{\times}$ when $\Gamma$ is amenable. For example, if $\Gamma$ is elementary amenable then a result of Chou in \cite{Chou} implies that $\ZZ(\Gamma)\cap L(\Gamma)^{\times}\subseteq\ell^{1}(\Gamma)^{\times}$ if and only if $\Gamma$ is virtually nilpotent. This examples reveal that, even in the amenable case, the generalization from invertbility in $\ell^{1}(\Gamma)$ to invertibility in $L(\Gamma)$ is significant. 

\textbf{Acknowledgments.} I thank the anonymous referee for their comments, which vastly improved the understandability of the paper.

\section{$\ell^{p}$-Versions of Independence Tuples}

Let us first recall the definition of a sofic group. For an $n\in \NN,$ we let $u_{n}$ be the uniform measure on $\{1,\dots,n\}.$ In general, if $A$ is a finite set, we use $u_{A}$ for the uniform probability measure on $A.$ We use $S_{n}$ for the symmetric group on $n$ letters.
\begin{definition}\emph{Let $\Gamma$ be a countable discrete group. A} sofic approximation \emph{ is a sequence of functions (not assumed to be homomorphisms) $\sigma_{i}\colon\Gamma\to S_{d_{i}}$ so that}
\[u_{d_{i}}(\{1\leq j\leq d_{i}:\sigma_{i}(gh)(j)=\sigma_{i}(g)\sigma_{i}(h)(j)\})\to 1,\mbox{\emph{ for all $g,h\in\Gamma$}}\]
\[u_{d_{i}}(\{1\leq j\leq d_{i}:\sigma_{i}(g)(j)\ne j\})\to 1,\mbox{ \emph{for all $g\in\Gamma\setminus\{e\}$}}.\]
\emph{We say that $\Gamma$ is} sofic \emph{if it has a sofic approximation.}
\end{definition}
It is known that all amenable groups and residually finite groups are sofic. Also, it is known that soficity is closed under free products  with amalgamtion over amenable subgroups (see \cite{ESZ1},\cite{LPaun},\cite{DKP},\cite{KerrDykemaPichot2}, \cite{PoppArg}).  In fact, it is shown in \cite{CHR} that graph products of sofic groups are sofic. Additonally, residually sofic groups and locally sofic groups are sofic. By Malcev's Theorem, this implies all linear groups are sofic. Finally, if $\Lambda$ is a subgroup of $\Gamma$ and $\Gamma\actson \Gamma/\Lambda$ is amenable (i.e. there is a $\Gamma$ invariant mean on $\Gamma/\Lambda$), then $\Gamma$ is sofic. For a pseudometric space $(X,\rho)$ and $A$ a finite set, and $1\leq p\leq \infty$ we define
\[\rho_{p,A}(x,y)^{p}=\frac{1}{|A|}\sum_{a\in A}\rho(x(a),y(a))^{p},\mbox{ if $p<\infty$}\]
\[\rho_{\infty,A}(x,y)=\max_{a\in A}\rho(x(a),y(a)).\]
If $A=\{1,\dots,n\}$ we shall typically use
\[\rho_{p,n}\]
instead of
\[\rho_{p,\{1,\dots,n\}}.\]
We recall the definition of sofic entropy.

\begin{definition}\emph{Let $\Gamma$ be a countable discrete group and $\Gamma\actson X$ by homeomorphisms. We say that a continuous pseudometric $\rho$ on $X$ is} dynamically generating \emph{if for all $x\ne y,$ there is a $g\in\Gamma$ so that $\rho(gx,gy)>0.$}
\end{definition}

For a pseudometric space $(X,\rho),$ subsets $A,B$ of $X$ and $\varepsilon>0$ we say that $A$ is $\varepsilon$-contained in $B$ and write $A\subseteq_{\varepsilon}B$ if for all $a\in A,$ there is a $b\in B$ with $\rho(a,b)\leq \varespilon.$ We say that $A\subseteq X$ is $\varepsilon$-dense if $X\subseteq_{\varespilon}A.$ We use $S_{\varepsilon}(X,\rho)$ for the smallest cardinality of an $\varepsilon$-dense subset of $X.$ 

\begin{definition}\emph{Let $\Gamma$ be a countable discrete sofic group with sofic approximation $\sigma_{i}\colon\Gamma\to S_{d_{i}}.$  Fix a continuous dynamically generating pseudometric $\rho$ on $X.$ For a finite $F\subseteq\Gamma,$ and $\delta>0,$ we let $\Map(\rho,F,\delta,\sigma_{i})$ be all $\phi\in X^{d_{i}}$ so that}
\[\max_{g\in F}\rho_{2,d_{i}}(\phi\circ \sigma_{i}(g),g\phi)<\delta.\]
\end{definition}

\begin{definition}\emph{ Let $\Gamma$ be a countable discrete sofic group with sofic approximation $\sigma_{i}\colon\Gamma\to S_{d_{i}}.$  Let $X$ be a compact metrizable space with $\Gamma\actson X$ by homeomorphisms. Fix a  continuous dynamically generating pseudometric $\rho$ on $X.$ Define the entropy of $\Gamma\actson X$ with respect to $\sigma_{i}$ by}
\[h_{(\sigma_{i})_{i}}(\rho,F,\delta,\varepsilon)=\limsup_{i\to\infty}\frac{1}{d_{i}}\log S_{\varepsilon}(\Map(\rho,F,\delta,\sigma_{i}),\rho_{2,d_{i}})\]
\[h_{(\sigma_{i})_{i}}(\rho,\varepsilon)=\inf_{F,\delta}h_{(\sigma_{i})_{i}}(\rho,F,\delta,\varepsilon)\]
\[h_{(\sigma_{i})_{i}}(X,\Gamma)=\sup_{\varepsilon>0}h_{(\sigma_{i})_{i}}(\rho,\varepsilon).\]
\end{definition}
In \cite{KLi} Theorem 4.5 and \cite{KLi2} Proposition 2.4, it is shown that this does not depend upon the choice of continuous dynamically generating pseudometric. In \cite{KerrLi2}, Kerr-Li defined independence tuples as a topological measure of randomness of the action, and connected it with positivity of topological entropy. One of the main results of \cite{KerrLi2} of relevance  for us is Proposition 4.16 (3), which shows that positivity of entropy is equivalent to the existence of a nondiagonal independence pair.  For our purposes, it will be convenient to consider $\ell^{q}$-versions of independence tuples.

\begin{definition}\emph{ Let $\Gamma$ be a countable discrete sofic group with sofic approximation $\sigma_{i}\colon \Gamma\to S_{d_{i}}.$ Let $X$ be a compact metrizable space with $\Gamma\actson X$ by homeomorphisms, and fix a continuous dynamically generating pseudometric $\rho$ on $X,$ and $1\leq q\leq \infty.$ Let $x=(x_{1},\dots,x_{k})\in X^{k}.$ For finite $F,K\subseteq\Gamma$ and $\delta,\varepsilon>0$ we say that a subset $J\subseteq\{1,\dots,d_{i}\}$ is a} $\ell^{q}-(\rho,F,\delta,\sigma_{i};\varepsilon,K)$-independence set for $x$ \emph{if for every $c\colon J\to \{1,\dots,k\}$ there is a $\phi\in \Map(\rho,F,\delta,\sigma_{i})$ so that}
\[\max_{g\in K}\rho_{q,J}(g\phi(\cdot),gx_{c(\cdot)})<\varepsilon.\]
\emph{We let $I_{q}(x,\rho,F,\delta,\sigma_{i};\varepsilon,K)$ be the maximum of $u_{d_{i}}(J)$ where $J$ is a $\ell^{q}-(\rho,F,\delta,\sigma_{i};\varepsilon,K)$-independence set for $x$. Additionally, we let }
\[I_{q}(x,\rho,F,\delta,(\sigma_{i})_{i};\varepsilon,K)=\limsup_{i\to\infty}I_{q}(x,\rho,F,\delta,\sigma_{i};\varepsilon,K)\]
\[I_{q}(x,\rho,(\sigma_{i})_{i};\varepsilon,K)=\inf_{\substack{\textnormal{finite } F\subseteq\Gamma,\\ \delta>0}}I_{q}(x,\rho,F,\delta,(\sigma_{i})_{i};\varepsilon,K).\]
\emph{We say that $x$ is a} $\ell^{q}-\IE-$tuple with respect to $\rho$ \emph{ if for all $\varepsilon>0,$ and finite $K\subseteq\Gamma$,}
\[I_{q}(x,\rho,(\sigma_{i})_{i};\varepsilon,K)>0.\]
\emph{We let $\IE^{k}_{(\sigma_{i})_{i},\rho}(X,\Gamma,q)$ be the set of all  $\ell^{q}-\IE-$tuples with respect to $\rho.$} \end{definition}

We shall typically denote $\IE^{k}_{(\sigma_{i})_{i},\rho}(X,\Gamma,q)$  by $\IE^{k}_{(\sigma_{i})_{i},\rho}(q)$ if $X,\Gamma$ are clear from the context.  Our goal in this section is to show that $\IE^{k}_{(\sigma_{i})_{i},\rho}(q)$ is independent of the choice of $\rho ,q,$ and that in fact $\IE^{k}_{(\sigma_{i})_{i},\rho}(q)$ is the set of independence $k$-tuples as defined by Kerr-Li in \cite{KerrLi2}. We will first show that $\IE^{k}_{(\sigma_{i})_{i},\rho}(q)$ does not depend upon $\rho.$

\begin{lemma}\label{L:whatmetric?}Let $\Gamma$ be a countable discrete sofic group and $X$ a compact metrizable space with $\Gamma\actson X$ by homeomorphisms. Let $\rho,\rho'$ be two continuous dynamically generating pseudometrics on $X.$ Then for any $1\leq q\leq\infty,$
\[\IE^{k}_{(\sigma_{i})_{i},\rho}(q)=\IE^{k}_{(\sigma_{i})_{i},\rho'}(q).\]
\end{lemma}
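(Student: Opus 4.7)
The plan is to prove one containment, say $\IE^{k}_{(\sigma_{i})_{i},\rho}(q)\subseteq \IE^{k}_{(\sigma_{i})_{i},\rho'}(q)$, since the reverse follows by exchanging the roles of $\rho$ and $\rho'$. The key auxiliary statement is the following \emph{uniform equivalence} of pseudometrics, a standard compactness consequence of dynamic generation: for every $\eta>0$ there exist a finite $F_{1}\subseteq\Gamma$ and $\delta_{1}>0$ such that
\[\max_{h\in F_{1}}\rho(hy,hz)<\delta_{1}\implies \rho'(y,z)<\eta\qquad\text{for all }y,z\in X.\]
Indeed, if this failed we could extract from $X\times X$ a limit $(y,z)$ with $\rho(gy,gz)=0$ for every $g\in\Gamma$ yet $\rho'(y,z)\geq \eta$, contradicting the fact that $\rho$ is dynamically generating.

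Fix $x\in\IE^{k}_{(\sigma_{i})_{i},\rho}(q)$, a finite $K\subseteq\Gamma$, and $\varepsilon>0$. Given arbitrary finite $F_{0}\subseteq\Gamma$ and $\delta_{0}>0$, I will produce a positive lower bound for $I_{q}(x,\rho',F_{0},\delta_{0},(\sigma_{i})_{i};\varepsilon,K)$ independent of $F_{0},\delta_{0}$. Pick a small $\eta>0$ (to be fixed below) and let $F_{1}\supseteq F_{0}$, $\delta_{1}>0$ be as in the uniform equivalence. Choose $F\supseteq F_{0}\cup F_{1}\cup F_{1}F_{0}$, $K'\supseteq K\cup F_{1}K$, and sufficiently small parameters $\delta,\varepsilon''>0$ to be tuned at the end. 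Since $x$ is an $\IE$-tuple for $\rho$, there is $c_{0}>0$ so that for infinitely many $i$ we may find a $\ell^{q}$-$(\rho,F,\delta,\sigma_{i};\varepsilon'',K')$-independence set $J_{i}$ for $x$ with $u_{d_{i}}(J_{i})\geq c_{0}$. The claim is that each such $J_{i}$ is also a $\ell^{q}$-$(\rho',F_{0},\delta_{0},\sigma_{i};\varepsilon,K)$-independence set for $x$, which completes the proof upon taking limsup and then infimum over $F_{0},\delta_{0}$.

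To verify the claim, fix $c\colon J_{i}\to\{1,\dots,k\}$ and choose $\phi\in\Map(\rho,F,\delta,\sigma_{i})$ with $\max_{g\in K'}\rho_{q,J_{i}}(g\phi(\cdot),gx_{c(\cdot)})<\varepsilon''$. For the \emph{map condition transfer}, fix $g\in F_{0}$ and $h\in F_{1}$, so $hg\in F$. Using (i) $\rho_{2,d_{i}}(\phi\circ\sigma_{i}(hg),hg\phi)<\delta$, (ii) the reindexing of $\rho_{2,d_{i}}(\phi\circ\sigma_{i}(h),h\phi)<\delta$ by the permutation $\sigma_{i}(g)$, and (iii) the sofic cocycle relation $\sigma_{i}(hg)(j)=\sigma_{i}(h)\sigma_{i}(g)(j)$ off a set of vanishing density, the triangle inequality shows that $\frac{1}{d_{i}}\sum_{j}\rho(h\phi(\sigma_{i}(g)(j)),hg\phi(j))^{2}$ can be made arbitrarily small by shrinking $\delta$ and requiring $i$ large. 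A Chebyshev/union-bound argument over $h\in F_{1}$ then gives a set $B\subseteq\{1,\dots,d_{i}\}$ of arbitrarily small density off which $\max_{h\in F_{1}}\rho(h\phi(\sigma_{i}(g)(j)),hg\phi(j))<\delta_{1}$; by the uniform equivalence, $\rho'(\phi(\sigma_{i}(g)(j)),g\phi(j))<\eta$ off $B$. Since $\rho'$ is bounded by $M=\diam(X,\rho')<\infty$, this yields $\rho'_{2,d_{i}}(\phi\circ\sigma_{i}(g),g\phi)^{2}\leq \eta^{2}+u_{d_{i}}(B)M^{2}<\delta_{0}^{2}$ after suitable tuning, so $\phi\in\Map(\rho',F_{0},\delta_{0},\sigma_{i})$.

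The \emph{approximation transfer} is simpler: for $g\in K$ and $h\in F_{1}$ we have $hg\in K'$, so $\rho_{q,J_{i}}(hg\phi,hgx_{c})<\varepsilon''$; Markov's inequality (for $q<\infty$) bounds $u_{J_{i}}\{j:\rho(hg\phi(j),hgx_{c(j)})\geq \delta_{1}\}\leq (\varepsilon''/\delta_{1})^{q}$, and union-bounding over $h\in F_{1}$ gives a bad set of density $\leq |F_{1}|(\varepsilon''/\delta_{1})^{q}$ on which we use the trivial bound $\rho'\leq M$; on the complement, uniform equivalence yields $\rho'(g\phi(j),gx_{c(j)})<\eta$. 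For $q=\infty$, one simply arranges $\varepsilon''<\delta_{1}$ so every $j$ is good. Either way, choosing $\eta$ and $\varepsilon''$ small enough (relative to $\varepsilon,|F_{1}|,M,\delta_{1}$) makes $\rho'_{q,J_{i}}(g\phi,gx_{c})<\varepsilon$. The main obstacle is the map-condition transfer, since it requires a careful combined use of the sofic cocycle identity and a reindexing of the pointwise $\rho$-estimate by the permutation $\sigma_{i}(g)$; once this is handled, the rest is routine bookkeeping.
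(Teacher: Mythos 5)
Your proof is correct and follows essentially the same strategy as the paper's: transfer the independence-set estimate via a uniform-equivalence statement between the two dynamically generating pseudometrics, handle the $\ell^{q}$-closeness condition by a Chebyshev/Markov truncation, and treat $q=\infty$ separately. The only real difference is in the map-condition transfer: the paper simply invokes Lemma 2.3 of \cite{KerrLi2}, which directly gives $\Map(\rho',F',\delta',\sigma_{i})\subseteq\Map(\rho,F,\delta,\sigma_{i})$ for suitably chosen $(F',\delta')$, whereas you re-derive that inclusion from scratch using the sofic cocycle identity, permutation-invariance of the $\rho_{2,d_{i}}$ average, and another Chebyshev bound. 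Your version is more self-contained, at the cost of length; both are fine, and citing the Kerr--Li lemma would shorten your argument to essentially the paper's.
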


\begin{proof} Let $M,M'$ be the diameters of $\rho,\rho'.$  Let $x\in\IE^{k}_{(\sigma_{i})_{i},\rho'}(q).$ Let $\varepsilon>0$ and a finite $K\subseteq\Gamma$ be given. Choose a finite $K'\subseteq\Gamma$ and an $\varepsilon'>0$ so that
\[\max_{g\in K}\rho(gx,gy)<\varepsilon\]
whenever
\[\max_{g\in K'}\rho'(gx,gy)<\varepsilon'.\]
Let  $\eta'>0$ depend upon $\varepsilon$ in a manner to be determined later. Let $\alpha'>0$ be such that
\[I_{q}(x,\rho',(\sigma_{i})_{i};\eta',K')\geq \alpha'.\]

Suppose we are given a finite $F\subseteq\Gamma,$ and a $\delta>0.$ By Lemma 2.3 in \cite{KerrLi2}, we may choose a finite $F'\subseteq\Gamma$ and a $\delta'>0$ so that
\[\Map(\rho',F',\delta',\sigma_{i})\subseteq\Map(\rho,F,\delta,\sigma_{i}).\]
Let $J_{i}'$ be a $\ell^{q}-(\rho',F',\delta',\sigma_{i};\eta',K')$ independence set of maximal cardinality. Suppose we are given
\[c\colon J_{i}'\to \{1,\dots,k\},\]
choose a $\phi\in\Map(\rho',F',\delta',\sigma_{i})$ so that
\[\max_{g\in K'}\rho'_{q,J_{i}'}(g\phi(\cdot),gx_{c(\cdot)})<\eta'.\]
Let
\[C_{i}=\bigcap_{g\in K'}\{j\in J_{i}':\rho'(g\phi(j),gx_{c(j)})<\varepsilon'\}.\]
If $q<\infty,$ then
\[u_{J_{i}}(J_{i}'\setminus C_{i})\leq |K'|\left(\frac{\eta'}{\varepsilon'}\right)^{q}.\]
If $q=\infty,$ we force $\eta'<\varepsilon'$ so that $C_{i}=J_{i}'.$
For $j\in C_{i}$ we have by our choice of $\varepsilon',K'$ that
\[\rho(g\phi(j),gx_{c(j)})<\varepsilon\]
for all $g\in K.$ Thus for all $g\in K,$ and $q<\infty,$
\[\rho_{q,J_{i}'}(g\phi(\cdot),gx_{c(\cdot)})^{q}<\varepsilon^{q}+M^{q} |K'|\left(\frac{\eta'}{\varepsilon'}\right)^{q}\]
and if $q=\infty,$ then
\[\rho_{\infty,J_{i}'}(g\phi(\cdot),gx_{c(\cdot)})<\varepsilon'.\]
Choosing $\eta'>0$ sufficiently small (depending upon $K,q$) , we see that we have that $J_{i}'$ is a $(\rho,F,\delta,\sigma_{i};2\varepsilon,K)$-independence set. Thus
\[I_{q}(\rho,F,\delta,(\sigma_{i});2\varepsilon,K)\geq \alpha'.\]
As $F,\delta,\varepsilon$ are arbitrary this completes the proof.

\end{proof}

Thus for $1\leq q\leq \infty$ we will use $\IE^{k}_{(\sigma_{i})_{i}}(q)$ for $\IE^{k}_{(\sigma_{i})_{i},\rho}(q)$ for any continuous dynamically generating pseudometric $\rho.$ If $X,\Gamma$ are not clear from the context we will use
\[\IE^{k}_{(\sigma_{i})_{i}}(X,\Gamma,q)\]
instead of $\IE^{k}_{(\sigma_{i})_{i}}(q).$ It is not hard to relate our notion of combinatorial independence to that developed by Kerr-Li in \cite{KerrLi2}. We use $\IE^{k}_{(\sigma_{i})_{i}}$ for the set of $(\sigma_{i})_{i}-\IE-k$-tuples as defined by Kerr-Li in \cite{KerrLi2} (again we should really use $\IE^{k}_{(\sigma_{i})_{i}}(X,\Gamma)$ but in most cases $\Gamma\actson X$ will be clear from the context).
\begin{cor} Let $\Gamma$ be a countable discrete sofic group with sofic approximation $\sigma_{i}\colon\Gamma\to S_{d_{i}}.$ Let $X$ be a compact metrizable space with $\Gamma\actson X$ by homeomorphisms. Then $\IE^{k}_{(\sigma_{i})_{i}}(\infty)=\IE^{k}_{(\sigma_{i})_{i}}.$\end{cor}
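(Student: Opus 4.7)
The plan is to prove the two inclusions separately, using the routine equivalence between the metric and the open-neighborhood formulations. Recall that the Kerr--Li definition of $\IE^{k}_{(\sigma_{i})_{i}}$ in \cite{KerrLi2} says that $x=(x_1,\ldots,x_k)$ is an $\IE$-tuple when, for every choice of open neighborhoods $U_{j}\ni x_{j}$, the tuple $\mathbf{U}=(U_1,\ldots,U_k)$ has positive independence density, meaning that there exist finite $F\subseteq\Gamma$, $\delta>0$, and sets $J_{i}\subseteq\{1,\ldots,d_i\}$ of positive asymptotic density with the property that for each $c\colon J_{i}\to\{1,\ldots,k\}$ some $\phi\in \Map(\rho,F,\delta,\sigma_i)$ satisfies $\phi(j)\in U_{c(j)}$ for all $j\in J_i$. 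The bridge to the $\ell^{\infty}$ version is just that continuous metrics and open sets give equivalent bases at each $x_j$.

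For the forward inclusion $\IE^{k}_{(\sigma_{i})_{i}}(\infty)\subseteq\IE^{k}_{(\sigma_{i})_{i}}$, I would take $x\in\IE^{k}_{(\sigma_{i})_{i}}(\infty)$ and an arbitrary tuple of open neighborhoods $U_{j}\ni x_{j}$. Choose $\varepsilon>0$ so that the $\rho$-ball of radius $\varepsilon$ around $x_{j}$ lies in $U_{j}$ for each $j$, and apply the $\ell^{\infty}$-independence hypothesis with this $\varepsilon$ and with $K=\{e\}$. The resulting positive-density sets $J_{i}$ automatically satisfy the Kerr--Li independence condition, since $\rho_{\infty,J_i}(\phi(\cdot),x_{c(\cdot)})<\varepsilon$ forces $\phi(j)\in U_{c(j)}$ pointwise.

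For the reverse inclusion, given $x\in\IE^{k}_{(\sigma_{i})_{i}}$ and an arbitrary $\varepsilon>0$ together with a finite $K\subseteq\Gamma$, I would use continuity of the finite collection of homeomorphisms $\{g:g\in K\}$ to shrink neighborhoods $U_{j}\ni x_{j}$ so that $y\in U_{j}$ forces $\max_{g\in K}\rho(gy,gx_{j})<\varepsilon$; concretely, set $U_j=\bigcap_{g\in K}g^{-1}B_\rho(gx_j,\varepsilon)$. By hypothesis, $\mathbf{U}$ has positive independence density, and any sets $J_i$ realizing this Kerr--Li density immediately become $\ell^{\infty}$-$(\rho,F,\delta,\sigma_i;\varepsilon,K)$-independence sets, since $\phi(j)\in U_{c(j)}$ yields $\max_{g\in K}\rho(g\phi(j),gx_{c(j)})<\varepsilon$. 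Taking inf over $F,\delta$ and lim sup over $i$ then gives $I_{\infty}(x,\rho,(\sigma_i)_i;\varepsilon,K)>0$.

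I do not expect any genuine obstacle. The argument is essentially bookkeeping that swaps metric balls for open neighborhoods in both directions; the only mildly non-trivial input is the joint continuity used to choose $U_j$ in the reverse inclusion, and that is already implicit in Lemma \ref{L:whatmetric?}, which freed us from any particular choice of $\rho$.
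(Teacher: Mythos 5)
Your proof is correct and follows essentially the same route as the paper: the paper simply observes that when $\rho$ is a compatible metric one has $\IE^{k}_{(\sigma_{i})_{i},\rho}(\infty)=\IE^{k}_{(\sigma_{i})_{i}}$ and then invokes Lemma \ref{L:whatmetric?} to remove the dependence on $\rho$, and your two-inclusion argument (shrinking balls versus intersecting pulled-back neighborhoods $U_j=\bigcap_{g\in K}g^{-1}B_\rho(gx_j,\varepsilon)$) is exactly the spelled-out version of that ``easily seen'' identification. One small imprecision: the Kerr--Li independence density is an infimum over $(F,\delta)$ rather than an existential statement as you phrase it mid-paragraph, but your closing sentence makes clear you are applying the inclusion for each fixed $F,\delta$ before taking the infimum, so the argument as a whole is sound.
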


\begin{proof}
It is easily seen that
\[\IE^{k}_{(\sigma_{i})_{i}}=\IE^{k}_{(\sigma_{i}),\rho}(\infty)\]
when $\rho$ is a compatible metric. Now apply the preceding lemma.

\end{proof}

We now show that in fact $\IE^{k}_{(\sigma_{i})_{i}}(q)$ does not depend upon $q.$ We remark that the proof is closely modeled on the proof of Proposition 4.6 in \cite{KerrLi2}. We will need Karpovsky and Milman's generalization of the Sauer-Shelah lemma (see \cite{SS1},\cite{SS2},\cite{SS3}).  For convenience we state the Lemma below.

\begin{lemma}\label{L:GSS} For any integer $k\geq 2$ and any real number $\lambda\in (k-1,k)$ there is a constant $\beta(\lambda)>0$ so that for all $n\in\NN$ if  $S\subseteq \{1,2,\dots,k\}^{n}$ has $|S|^{1/n}\geq \lambda,$ then there is an $I\subseteq\{1,\dots,n\}$ with $|I|\geq \beta(\lambda)n$ and
\[S\big|_{I}=\{1,2,\dots,k\}^{I}.\]
\end{lemma}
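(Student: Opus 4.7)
The plan is to prove this generalized Sauer-Shelah lemma by separating it into a purely combinatorial exact counting bound and an asymptotic entropy estimate. The combinatorial step shows that if $S\subseteq\{1,\ldots,k\}^{n}$ does not shatter any $d$-element subset of $\{1,\ldots,n\}$ (meaning that for no $I$ of size $d$ does one have $S\big|_{I}=\{1,\ldots,k\}^{I}$), then
\[|S|\leq\sum_{i=0}^{d-1}\binom{n}{i}(k-1)^{n-i}.\]
For $k=2$ this recovers the classical Sauer-Shelah bound. I would prove it by induction on $n$: project $S$ onto the first $n-1$ coordinates to get $T$, let $T'\subseteq T$ consist of those $t$ for which $(t,j)\in S$ for every $j\in\{1,\ldots,k\}$, and note $|S|\leq(k-1)|T|+|T'|$. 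The set $T$ does not shatter any $d$-subset of $\{1,\ldots,n-1\}$, and $T'$ does not shatter any $(d-1)$-subset of $\{1,\ldots,n-1\}$ (else $S$ would shatter the corresponding $d$-subset together with the last coordinate). Applying the inductive hypothesis and simplifying via Pascal's identity $\binom{n}{i}=\binom{n-1}{i}+\binom{n-1}{i-1}$ closes the loop.

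For the asymptotic step, introduce
\[f(\alpha)=-\alpha\ln\alpha-(1-\alpha)\ln(1-\alpha)+(1-\alpha)\ln(k-1)\]
on $[0,1]$. This is continuous and concave, with $f(0)=\ln(k-1)$ and maximum $\ln k$ attained at $\alpha=1/k$. Stirling's formula gives
\[\sum_{i=0}^{\lfloor\beta n\rfloor}\binom{n}{i}(k-1)^{n-i}\leq P(n)\,e^{nf(\beta)}\]
for some polynomial $P$, provided $\beta\leq 1/k$. Since $\lambda\in(k-1,k)$ yields $f(0)<\ln\lambda<f(1/k)$, there exists $\alpha^{*}\in(0,1/k)$ with $f(\alpha^{*})=\ln\lambda$. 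Choosing any $\beta(\lambda)<\alpha^{*}$, the right-hand side above is strictly smaller than $\lambda^{n}\leq|S|$ for all $n$ past some threshold $n_{0}(\lambda)$, so the contrapositive of the combinatorial bound forces $S$ to shatter some subset of size at least $\lceil\beta(\lambda)n\rceil$. To cover the range $n<n_{0}$, shrink $\beta(\lambda)$ further so that $\beta(\lambda)n<1$ for such $n$, making the required conclusion $|I|\geq\beta(\lambda)n$ satisfied vacuously by $I=\varnothing$.

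The main obstacle is identifying the correct inductive decomposition in the combinatorial step; the auxiliary set $T'$ exploiting the last coordinate's full fiber is the crucial idea, and once this quantity is isolated the rest follows from routine binomial identities. The subsequent entropy computation is standard Stirling analysis, but one must verify that the threshold $\beta(\lambda)$ can indeed be chosen depending only on $\lambda$ and $k$ (and not on $n$), which is what the concavity of $f$ and the strict separation $\ln(k-1)<\ln\lambda<\ln k$ guarantee. I expect the combinatorial induction to be the conceptually interesting step and the Stirling estimate to be largely mechanical.
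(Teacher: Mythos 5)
The paper does not prove this lemma; it states it as a black-box citation to Karpovsky--Milman and Sauer/Shelah, so there is no in-paper argument to compare against. Your proof is the standard two-stage argument and is essentially correct: the inductive decomposition $|S|\le(k-1)|T|+|T'|$ with the ``full-fiber'' set $T'$ is exactly the right generalization of the classical shifting/induction proof, and the Pascal-identity bookkeeping does close up to yield $|S|\le\sum_{i=0}^{d-1}\binom{n}{i}(k-1)^{n-i}$, with $T'$ failing to shatter any $(d-1)$-set because each $t\in T'$ has all $k$ extensions in $S$. The entropy estimate is also fine: the terms $\binom{n}{i}(k-1)^{n-i}$ are increasing in $i$ on the range $i\lesssim n/k$, so the sum up to $\lfloor\beta n\rfloor$ is at most $n+1$ times the largest term, which Stirling bounds by $P(n)e^{nf(\beta)}$, and the monotonicity of $f$ on $(0,1/k)$ with $f(0^{+})=\ln(k-1)<\ln\lambda<\ln k=f(1/k)$ gives the threshold $\alpha^{*}$.

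There is one small but genuine slip at the end. If $0<\beta(\lambda)n<1$, then $I=\varnothing$ has $|I|=0<\beta(\lambda)n$, so the conclusion $|I|\ge\beta(\lambda)n$ is \emph{not} satisfied vacuously by the empty set; you cannot discharge the small-$n$ range that way. The correct fix is just as cheap: since $\lambda>k-1$ and $|S|\ge\lambda^{n}>(k-1)^{n}=\sum_{i=0}^{0}\binom{n}{i}(k-1)^{n-i}$, your combinatorial bound with $d=1$ already shows that $S$ shatters some singleton $\{j\}$, hence $I=\{j\}$ has $|I|=1$. Shrinking $\beta(\lambda)$ to be at most $1/n_{0}$ then guarantees $1\ge\beta(\lambda)n$ for every $n\le n_{0}$, and the proof goes through. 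With that one-line correction the argument is complete and depends on $\lambda$ and $k$ only, as required.
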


\begin{lemma}\label{L:whatq?} Let $\Gamma$ be a countable discrete sofic group with sofic approximation $\sigma_{i}\colon\Gamma\to S_{d_{i}}.$ Let $X$ be a compact metrizable space with $\Gamma\actson X$ by homeomorphisms. For any $1\leq q_{1},q_{2}\leq \infty$ we have
\[\IE^{k}_{(\sigma_{i})_{i}}(q_{1})=\IE^{k}_{(\sigma_{i})_{i}}(q_{2}).\]
\end{lemma}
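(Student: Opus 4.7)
The containment $\IE^k_{(\sigma_i)_i}(q_2)\subseteq \IE^k_{(\sigma_i)_i}(q_1)$ whenever $q_1\leq q_2$ is immediate from Jensen's inequality applied to the uniform probability measure on $J$: any $\ell^{q_2}$-$(\rho,F,\delta,\sigma_i;\varepsilon,K)$-independence set is automatically an $\ell^{q_1}$-independence set with the same parameters. Together with Lemma \ref{L:whatmetric?}, this gives $\IE^k_{(\sigma_i)_i}(\infty)\subseteq\IE^k_{(\sigma_i)_i}(q)\subseteq\IE^k_{(\sigma_i)_i}(1)$ for every $q\in[1,\infty]$, so it suffices to prove the single reverse inclusion $\IE^k_{(\sigma_i)_i}(1)\subseteq\IE^k_{(\sigma_i)_i}(\infty)$.

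Fix $x=(x_1,\dots,x_k)\in\IE^k_{(\sigma_i)_i}(1)$ (the case $k=1$ is trivial, so assume $k\geq 2$), a continuous dynamically generating $\rho$, an $\varepsilon>0$, and a finite $K\subseteq\Gamma$. Pick $\eta>0$ small, to be determined from $k,|K|,\varepsilon$, and let $\alpha'>0$ satisfy $I_1(x,\rho,(\sigma_i)_i;\eta,K)\geq\alpha'$. For any finite $F\subseteq\Gamma$, $\delta>0$, and arbitrarily large $i$, choose an $\ell^1$-$(\rho,F,\delta,\sigma_i;\eta,K)$-independence set $J_i$ with $u_{d_i}(J_i)\geq\alpha'/2$, and for every $c\colon J_i\to\{1,\dots,k\}$ fix a witness $\phi_c\in\Map(\rho,F,\delta,\sigma_i)$ with $\rho_{1,J_i}(g\phi_c,gx_{c(\cdot)})<\eta$ for all $g\in K$. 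Define $G_c=\{j\in J_i:\max_{g\in K}\rho(g\phi_c(j),gx_{c(j)})<\varepsilon\}$; Markov's inequality yields $|J_i\setminus G_c|\leq\tau|J_i|$ where $\tau=|K|\eta/\varepsilon$ can be made arbitrarily small by shrinking $\eta$.

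The combinatorial step is the core of the argument. There are at most $2^{H(\tau)|J_i|}$ subsets of $J_i$ with complement of size $\leq\tau|J_i|$ (with $H$ the binary entropy function), so pigeonholing $c\mapsto G_c$ produces a single $G^*$ with $|\{c:G_c=G^*\}|\geq k^{|J_i|}2^{-H(\tau)|J_i|}$. The image $\Psi_{G^*}:=\{c|_{G^*}:G_c=G^*\}\subseteq\{1,\dots,k\}^{G^*}$ has fibers of size at most $k^{|J_i\setminus G^*|}$ under $c\mapsto c|_{G^*}$, so $|\Psi_{G^*}|^{1/|G^*|}\geq k\cdot 2^{-H(\tau)/(1-\tau)}$. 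Since $H(\tau)/(1-\tau)\to 0$ as $\tau\to 0$, for $\eta$ sufficiently small this quantity exceeds some $\lambda\in(k-1,k)$, and Lemma \ref{L:GSS} produces $J'_i\subseteq G^*$ with $|J'_i|\geq\beta(\lambda)|G^*|\geq\beta(\lambda)(1-\tau)|J_i|$ and $\Psi_{G^*}|_{J'_i}=\{1,\dots,k\}^{J'_i}$. Every $c'\colon J'_i\to\{1,\dots,k\}$ lifts to some $c$ with $G_c=G^*\supseteq J'_i$ and $c|_{J'_i}=c'$; since $J'_i\subseteq G_c$, the witness $\phi_c$ already provides the $\ell^\infty$-bound $\max_{g\in K}\rho(g\phi_c(j),gx_{c'(j)})<\varepsilon$ uniformly in $j\in J'_i$. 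The resulting density bound is positive and independent of $F,\delta$, so $x\in\IE^k_{(\sigma_i)_i}(\infty)$.

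The main obstacle is precisely this last passage: the ``good set'' $G_c$ a priori depends on the partition $c$, so one cannot apply Karpovsky-Milman directly to $\{1,\dots,k\}^{J_i}$. The resolution is that the number of possible $G^*$ is only subexponential in $|J_i|$ (bounded by $2^{H(\tau)|J_i|}$), while the partitions remain fully exponential, leaving enough $c$ with a common $G_c$ for Karpovsky-Milman to upgrade an average $\ell^1$-bound to a coordinatewise $\ell^\infty$-bound on a positive-density subset.
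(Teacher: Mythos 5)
Your proof is correct and follows essentially the same strategy as the paper: reduce to $\IE^{k}_{(\sigma_i)_i}(1)\subseteq\IE^{k}_{(\sigma_i)_i}(\infty)$, pass from an $\ell^1$-bound to a coordinatewise $\ell^\infty$-bound on a good set $G_c$ via Markov/Chebyshev, pigeonhole over the subexponentially many possible good sets to find a large family of colorings sharing one good set, and apply the Karpovsky--Milman (Sauer--Shelah) lemma to extract a positive-density shattered subset on which the $\ell^\infty$-bound holds. The only organizational difference is that you first restrict the surviving colorings to $G^*$ and then apply Karpovsky--Milman inside $\{1,\dots,k\}^{G^*}$, whereas the paper applies it inside $\{1,\dots,k\}^{J_i}$ to get $J_i'$ and then takes $J_i'\cap\Theta_i$; both produce a subset of the common good set of proportional size, so this is a cosmetic variation rather than a different route.
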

\begin{proof}

It is clear that if $q_{1}<q_{2},$ then
\[\IE^{k}_{(\sigma_{i})_{i}}(q_{1})\supseteq \IE^{k}_{(\sigma_{i})_{i}}(q_{2}).\]
It thus suffices to prove that
\[\IE^{k}_{(\sigma_{i})_{i}}(1)\subseteq \IE^{k}_{(\sigma_{i})_{i}}(\infty).\]
Fix $k-1<\lambda<k,$ and let $\beta$ be the function defined in Lemma \ref{L:GSS}. Then we may find a  $n_{0}$ so that if $J$ is a finite set with $|J|\geq n_{0}$ and $E\subseteq \{1,\dots,k\}^{J}$ has
\[|E|\geq \lambda^{|J|},\]
then there is a $J'\subseteq J$ with
\[|J'|\geq \beta(\lambda)|J|\]
so that
\[E\big|_{J'}=\{1,\dots,k\}^{J'}.\]
Let $\rho$ be a continuous dynamically generating pseudometric on $X.$ Let
\[x=(x_{1},\dots,x_{k})\in \IE^{k}_{(\sigma_{i})_{i}}(1).\]
Let $\varepsilon>0$ and a finite $K\subseteq\Gamma$ be given. Let $\varepsilon'>0$ depend upon $\varepsilon$ in manner to be determined later. Set
\[\alpha=I_{1}(x,\rho,(\sigma_{i})_{i};\varepsilon',K).\]
Suppose we are given a finite $F\subseteq\Gamma$ and $\delta>0.$ Choose $J_{i}\subseteq\{1,\dots,d_{i}\}$ a $\ell^{1}-(\rho,F,\delta,\sigma_{i};\varepsilon',K)$-independence set for $x$ with
\[u_{d_{i}}(J_{i})=I_{1}(\rho,F,\delta,\sigma_{i};\varepsilon',K,x).\]
For every $c\colon J_{i}\to \{1,\dots,k\}$ choose a $\phi_{c}\in \Map(\rho,F,\delta,\sigma_{i})$ so that
\[\max_{g\in K}\rho_{1,J_{i}}(g\phi_{c}(\cdot),gx_{c(\cdot)})<\varepsilon'.\]
Let
\[\Xi_{c}=\bigcap_{g\in K}\{j\in J_{i}:\rho(g\phi_{c}(j),gx_{c(\cdot)})<\varepsilon\}.\]
Then
\[u_{J_{i}}(J_{i}\setminus\Xi_{c})\leq |K|\left(\frac{\varepsilon'}{\varepsilon}\right).\]
Let
\[H(t)=-t\log(t)-(1-t)\log(1-t).\]
By Stirling's formula there is a $A>0$ so that the number of subsets of $J_{i}$ of cardinality at most $|K|\left(\frac{\varepsilon'}{\varepsilon}\right)|J_{i}|$ is at most
\[A\exp\left(H\left(|K|\left(\frac{\varepsilon'}{\varepsilon}\right)\right)|J_{i}|\right)|K|\left(\frac{\varepsilon'}{\varepsilon}\right)|J_{i}|.\]
Thus there is subset $\Omega_{i}\subseteq\{1,\dots,k\}^{d_{i}}$ with
\[|\Omega_{i}|\geq \frac{k^{|J_{i}|}\exp\left(H\left(|K|\left(\frac{\varepsilon'}{\varepsilon}\right)\right)\right)^{|J_{i}|}}{A|K|\left(\frac{\varepsilon'}{\varepsilon}\right)|J_{i}|},\]
so that $\Xi_{c}$ is the same, say equal to $\Theta_{i},$ for all $c\in \Omega_{i}.$
If we choose $\varepsilon'>0$ sufficiently small then
\[|\Omega_{i}|\geq \lambda^{|J_{i}|}\]
for all large $i.$ So by our choice of $\beta$ for all large $i,$ we can find a $J_{i}'\subseteq J_{i}$ with
\[|J_{i}'|\geq \beta(\lambda)|J_{i}|\]
and
\[\Omega_{i}\big|_{J_{i}'}=\{1,\dots,k\}^{J_{i}'}.\]
Thus
\[\limsup_{i\to\infty}u_{d_{i}}(J_{i}')\geq \beta(\lambda)\limsup_{i\to\infty}u_{d_{i}}(J_{i})\geq \beta(\lambda)\alpha.\]
Choose $\varepsilon'>0$ sufficiently small so that
\[|K|\left(\frac{\varepsilon'}{\varepsilon}\right)\leq \frac{\beta(\lambda)}{2}.\]
As
\[u_{J_{i}}(J_{i}\setminus \Theta_{i})\leq \left(\frac{\varepsilon'}{\varepsilon}\right)|K|,\]
we find that
\[\liminf_{i\to\infty}u_{J_{i}}(J_{i}'\cap \Theta_{i})\geq \frac{\beta(\lambda)}{2},\]
so
\[\limsup_{i\to\infty}u_{d_{i}}(J_{i}'\cap \Theta_{i})=\limsup_{i\to\infty}\frac{|J_{i}|}{d_{i}}u_{J_{i}}(J_{i}'\cap \Theta_{i})\geq \alpha\frac{\beta(\lambda)}{2}.\]
We claim that  $J_{i}'\cap \Theta_{i}$ is a $\ell^{\infty}-(\rho,F,\delta,\sigma_{i};\varepsilon,K)$ independence set for $x$ for infinitely many $i.$  Let
\[c'\colon J_{i}'\cap \Theta_{i}\to \{1,\dots,k\}.\]
Since
\[\Omega_{i}\big|_{J_{i}'}=\{1,\dots,k\}^{J_{i}'},\]
we have
\[\Omega_{i}\big|_{J_{i}'\cap \Theta_{i}}=\{1,\dots,k\}^{J_{i}'\cap \Theta_{i}}.\]
So we may find a $c\in \Omega_{i}$ so that $c\big|_{J_{i}\cap \Theta_{i}}=c'.$ Since $\Theta_{i}=\Xi_{c}$ we have that
\[\max_{g\in K}\rho(g\phi_{c}(j),gx_{c(j)})<\varepsilon\]
for all $j\in J_{i}'\cap\Theta_{i}.$ As
\[\limsup_{i\to\infty}u_{d_{i}}(J_{i}'\cap \Theta_{i})\geq \frac{\beta(\lambda)}{2}\alpha,\]
we find that
\[\limsup_{i\to\infty}I_{\infty}(\rho,F,\delta;\varepsilon,K)\geq \frac{\beta(\lambda)}{2}\alpha.\]
Thus
\[x\in \IE^{k}_{(\sigma_{i})_{i}}(\infty).\]

\end{proof}

\section{Independence Tuples with a Weak Containment Condition}\label{S:WCC}

We now proceed to give our strengthening of independence tuples. The basic idea is instead of considering our sequence of independence sets $(J_{i})_{i\geq 1}$ to be arbitrary subsets of $\{1,\dots,d_{i}\}$ we require that in the ``representation'' $\Gamma$ has on $\ell^{2}(d_{i})$ we have that the ``subrepresentation'' generated by $(\chi_{J_{i}}-u_{d_{i}}(J_{i})1)$ is weakly contained in the left regular representation. Moreover, instead of considering arbitrary partitions
\[c\colon J_{i}\to \{1,\dots,k\}\]
we only consider one so that the pieces $J_{i,l}=c^{-1}(\{l\})$ also have the property that the ``subrepresentation'' generated by $(\chi_{J_{i,l}}-u_{d_{i}}(J_{i,l})1)$ is weakly contained in the left regular representation. The results in \cite{Me6} indicate that positivity of entropy is related in an essential way to the left regular representation. Our modified version of independence tuple is more natural from that perspective. An essential difficulty here is that since $\sigma_{i}$ is not an honest homomorphism, we do not get an honest representation this way. As we shall see shortly, one can get around this using ultraproducts. For now, we simply give a direct definition.
	
		First let us introduce some notation. For a countable discrete group $\Gamma,$ define the left regular representation
\[\lambda\colon\Gamma\to U(\ell^{2}(\Gamma))\]
by
\[(\lambda(g)\xi)(h)=\xi(g^{-1}h).\]
Extend $\lambda$ to a map $\lambda\colon \CC(\Gamma)\to B(\ell^{2}(\Gamma))$ by
\[\lambda\left(\sum_{g\in\Gamma}\alpha_{g}g\right)=\sum_{g\in\Gamma}\alpha_{g}\lambda(g).\]
For $\alpha\in \CC(\Gamma),g\in\Gamma$ set
\[\widehat{\alpha}(g)=\ip{\lambda(\alpha)\delta_{e},\delta_{g}}.\]
Then
\[\alpha=\sum_{g\in\Gamma}\widehat{\alpha}(g)g.\]

If $\Gamma$ is a sofic group with sofic approximation $\sigma_{i}\colon\Gamma\to S_{d_{i}},$ we define $\sigma_{i}\colon\CC(\Gamma)\to M_{d_{i}}(\CC)$ by
\[\sigma_{i}(f)=\sum_{g\in\Gamma}\widehat{f}(g)\sigma_{i}(g).\]
Here we are viewing $\sigma_{i}(g)$ as a permutation matrix.

\begin{definition}\label{D:regularsets}\emph{Let $\Gamma$ be a countable discrete sofic group with sofic approximation $\sigma_{i}\colon \Gamma\to S_{d_{i}}.$ For a finite $F\subseteq\CC(\Gamma),$ and a $\delta>0$ we let $\Lambda(F,\delta,\sigma_{i})$ be all $J\subseteq\{1,\dots,d_{i}\}$ so that}
\[\max_{f\in F}\|\sigma_{i}(f)(\chi_{J}-u_{d_{i}}(J)1)\|_{2}\leq \|\lambda(f)\|\|\chi_{J}-u_{d_{i}}(J)1\|_{2}+\delta.\]
\emph{We let $\Lambda_{(\sigma_{i})_{i}}$ be the set of all sequences $(J_{i})_{i\geq 1}$ with $J_{i}\subseteq\{1,\dots,d_{i}\}$ so that for every finite $F\subseteq\CC(\Gamma),$ and for every $\delta>0$ it is true that for all large $i,$ $J_{i}\in \Lambda(F,\delta,\sigma_{i}).$}
\end{definition}

We now mention the formalization via ultrafilters. Fix a free ultrafilter $\omega\in\beta\NN\setminus \NN.$ Let
\[A=\frac{\left\{(f_{i})_{i=1}^{\infty}:f_{i}\in \ell^{\infty}(d_{i}),\sup_{i}\|f_{i}\|_{\infty}<\infty\right\}}{\left\{(f_{i})_{i=1}^{\infty}\colon f_{i}\in \ell^{\infty}(d_{i}),\sup_{i}\|f_{i}\|_{\infty}<\infty,\lim_{i\to \omega}\|f_{i}\|_{\ell^{2}(d_{i},u_{d_{i}})}=0\right\}}.\]
For a sequence $f_{i}\in \ell^{\infty}(d_{i},u_{d_{i}})$ we use $(f_{i})_{i\to\omega}$ for the image in $A$ of $(f_{i})_{i\geq 1}$ under the quotient map. There is a well-defined inner product on $A$ given by
\[\ip{(f_{i})_{i\to\omega},(g_{i})_{i\to\omega}}=\lim_{i\to\omega}\frac{1}{d_{i}}\sum_{j=1}^{d_{i}}f_{i}(j)\overline{g_{i}(j)}.\]
Let $L^{2}(A,u_{\omega})$ be the completion of $A$ under this inner-product. Then we have a well-defined unitary representation
\[\sigma_{\omega}\colon\Gamma\to \mathcal{U}(L^{2}(A,u_{\omega}))\]
defined densely by
\[\sigma_{\omega}(g)(f_{i})_{i\to\omega}=(f_{i}\circ \sigma_{i}(g))_{i\to\omega}\]
if $f_{i}\in \ell^{\infty}(d_{i},u_{d_{i}})$ and
\[\sup_{i}\|f_{i}\|_{\infty}<\infty.\]
We then see that $\Lambda_{(\sigma_{i})_{i}}$ can be regarded as all sequences $J_{i}$ of subsets of $\{1,\dots,d_{i}\}$ so that if $\omega\in\beta\NN\setminus\NN$ is any free ultrafilter and we set
\[\chi_{J_{\omega}}^{o}=(\chi_{J_{i}}-u_{d_{i}}(J_{i})1)_{i\to\omega},\]
\[\mathcal{K}=\overline{\Span\{\sigma_{\omega}(g)\chi_{J_{\omega}}^{o}:g\in\Gamma\}},\]
then the representation $\Gamma\actson \mathcal{K}$ is weakly contained in the left regular representation (see \cite{BHV} Appendix F for the relevant facts about weak containment of representations).

\begin{definition}\emph{Let $\Gamma$ be a countable discrete sofic group and $\sigma_{i}\colon\Gamma\to S_{d_{i}}$ a sofic approximation. Let $X$ be a compact metrizable space with $\Gamma\actson X$ by homeomorphisms. Let $\rho$ be a continuous dynamically generating pseudometric on $X.$  Let $1\leq q\leq \infty,$ and $x\in X^{k}.$ Fix finite $K,F\subseteq\Gamma,E\subseteq \CC(\Gamma)$ and $\varepsilon,\delta,\eta>0.$  We say that a sequence $(J_{i})_{i}$ is a} $\Lambda-\ell^{q}-(\rho,F,\delta,E,\eta;\varepsilon,K)$-independence sequence for $x$\emph{ if $(J_{i})_{i}\in \Lambda_{(\sigma_{i})_{i}},$ and for all $i,$  for all $c\colon J_{i}\to \{1,\dots,k\}$ with $c^{-1}(\{l\})\in \Lambda(E,\eta,\sigma_{i}),$ there is a $\phi\in\Map(\rho,F,\delta,\sigma_{i})$ with}
\[\max_{g\in K}\rho_{q,J_{i}}(g\phi(\cdot),gx_{c(\cdot)})<\vaerpsilon.\]
\emph{ We let $I_{\Lambda,q}(x,\rho,F,\delta,E,\eta,(\sigma_{i})_{i};\varepsilon,K)$ be the supremum of
\[\limsup_{i\to\infty}u_{d_{i}}(J_{i})\]
 over all sequences $(J_{i})$ which are $\Lambda-\ell^{q}-(\rho,F,\delta,E,\eta;\vaerpsilon,K)$-independence sequences. We then set}
 \[I_{\Lambda,q}(x,\rho,F,\delta,(\sigma_{i})_{i};\varepsilon,K)=\sup_{\substack{\textnormal{ finite } E\subseteq\CC(\Gamma),\\ \eta>0}}I_{\Lambda,q}(x,\rho,F,\delta,E,\eta;\varepsilon,K),\]
 \[I_{\Lambda,q}(x,\rho,(\sigma_{i})_{i};\varepsilon,K)=\inf_{\substack{\textnormal{ finite} F\subseteq\Gamma,\\ \delta>0}}I_{\Lambda,q}(x,\rho,F,\delta;\varepsilon,K).\]

 \end{definition}

\begin{definition}\emph{Let $\Gamma$ be a countable discrete group and $\sigma_{i}\colon\Gamma\to S_{d_{i}}$ a sofic approximation. Let $X$ be a compact metrizable space with $\Gamma\actson X$ by homeomorphisms. Let $\rho$ be a continuous dynamically generating pseudometric on $X,$ and $1\leq q<\infty.$  We say that $x=(x_{1},\dots,x_{k})$ is a}  $\ell^{q}-(\sigma_{i})-\IE-k$-tuple satisfying the weak containment condition (or a $\Lambda_{(\sigma_{i})_{i}}-\IE-k$-tuple)  \emph{if for every $\varepsilon>0$ and $K\subseteq\Gamma$ finite}
\[I_{\Lambda,q}(x,\rho,(\sigma_{i})_{i};\varepsilon,K )>0.\]
\emph{We use $\IE^{\Lambda,k}_{(\sigma_{i}),\rho}(X,\Gamma,q)$ for the set of  $\ell^{q}-(\sigma_{i})-IE-k$-tuples satisfying the weak containment condition. If $X,\Gamma$ are clear from context we will simply use $\IE^{\Lambda,k}_{(\sigma_{i}),\rho}(q).$}
\end{definition}

In fact, following the proof of Lemma \ref{L:whatmetric?}, one shows that $\IE^{\Lambda,k}_{(\sigma_{i}),\rho}(q)$ is independent of $\rho,$ so we simply use $\IE^{\Lambda,k}_{(\sigma_{i})_{i}}(q).$ However, we do not know if $\IE^{\Lambda,k}_{(\sigma_{i})_{i}}(q)$ is independent of $q.$ Note that if $M$ is the diameter of $(X,\rho)$ then by standard H\"{o}lder estimates we have for any finite set $A$ and $1\leq q_{1}<q_{2}<\infty:$
\[\rho_{q_{1},A}(x,y)\leq \rho_{q_{2},A}(x,y)\leq M^{1-\frac{q_{1}}{q_{2}}}\rho_{q_{1},A}(x,y)^{\frac{q_{1}}{q_{2}}},\mbox{ for any $x,y\in X^{A}.$}\]
From this it is not hard to see that 
\[\IE^{\Lambda,k}_{(\sigma_{i})_{i}}(q_{1})=\IE^{\Lambda,k}_{(\sigma_{i})_{i}}(q_{2})\]
for all $1\leq q_{1},q_{2}<\infty.$

	The definition may seem a little ad hoc. The following proposition will hopefully make it seem more natural. Essentially, this proposition will tell us two things: first if we fix a $\alpha>0,$ and choose a subset $J_{i}\subseteq\{1,\dots,d_{i}\}$ of size roughly $\alpha d_{i}$  uniformly at random, then $(J_{i})_{i}$ will be in $\Lambda_{(\sigma_{i})}$ with high probability. Secondly, suppose we are given a sequence $(J_{i})_{i}$ in $\Lambda_{(\sigma_{i})}$ a finite $E\subseteq\Gamma$ and a $\eta>0,$ and a probability measure $\mu$ on $\{1,\dots,k\}.$ If  we choose a partition of $(J_{i})_{i}$ into sets of size $\mu(\{1\})u_{d_{i}}(J_{i}),\dots,\mu(\{k\})u_{d_{i}}(J_{i})$ uniformly at random, then with high probability, each of the pieces of the partition will be in $\Lambda(E,\eta,\sigma_{i}).$ Thus we may view independence tuples satisfying the weak containment condition as simply a randomization of independence tuples as defined by Kerr-Li in \cite{KerrLi2}. The proof is a simple adaption of Bowen's argument for the computation of sofic entropy of Bernoulli shifts in \cite{Bow}.

\begin{proposition}\label{P:randomization} Let $\Gamma$ be a countable discrete sofic group, with sofic approximation $\sigma_{i}\colon\Gamma\to S_{d_{i}}.$ Let $\mu$ be a probability measure on $\{1,\dots,k\}.$ Fix a sequence $(J_{i})_{i}\in \Lambda_{(\sigma_{i})_{i}}.$ Then, for any $E\subseteq\CC(\Gamma)$ finite, and any $\eta>0$ and any $1\leq l\leq k$ we have
\[\mu^{\otimes J_{i}}(\{p\in \{1,\dots,k\}^{J_{i}}:p^{-1}(\{l\})\in \Lambda(E,\eta,\sigma_{i})\})\to 1,\]
\[\mu^{\otimes J_{i}}(\{p\in\{1,\dots,k\}^{J_{i}}:|u_{d_{i}}(p^{-1}(\{l\}))-\mu(\{l\})u_{d_{i}}(J_{i})|>\eta\})\to 0.\]
\end{proposition}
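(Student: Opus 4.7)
The plan is to handle the two claims separately, with the second (concentration of $|p^{-1}(\{l\})|$) following directly from Chebyshev's inequality. Writing $X_j := \chi_{\{p(j) = l\}}$ for $j \in J_i$, these are i.i.d.\ Bernoulli of mean $q := \mu(\{l\})$, so $u_{d_i}(p^{-1}(\{l\})) = d_i^{-1}\sum_{j\in J_i} X_j$ has mean $qu_{d_i}(J_i)$ and variance $q(1-q)u_{d_i}(J_i)/d_i \leq 1/d_i \to 0$, which gives the second conclusion immediately.

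For the first (weak-containment) claim, the central idea is to decompose
\[
\zeta_i^p \;:=\; \chi_{p^{-1}(\{l\})} - u_{d_i}(p^{-1}(\{l\}))1 \;=\; q\xi_i + Y + Z\cdot 1,
\]
where $\xi_i := \chi_{J_i} - u_{d_i}(J_i)1$ is deterministic, $Y := \sum_{j \in J_i}(X_j - q)\delta_j$ is a centered noise vector supported on $J_i$, and $Z := qu_{d_i}(J_i) - u_{d_i}(p^{-1}(\{l\}))$ is a centered scalar of order $d_i^{-1/2}$. Fixing $f \in E$, I would expand $\|\sigma_i(f)\zeta_i^p\|_2^2$ into three squared and three cross terms, bound its expectation by $\|\lambda(f)\|^2 \EE\|\zeta_i^p\|_2^2 + o(1)$, and show that every random term concentrates on its expectation so the same inequality (with an additional $\eta^2$ slack) holds with probability tending to one; a union bound over $E$ and $\{1,\ldots,k\}$ then finishes.

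The expectation bound assembles from three ingredients. First, the hypothesis $(J_i)_i \in \Lambda_{(\sigma_i)_i}$ bounds $q^2\|\sigma_i(f)\xi_i\|_2^2$ by $q^2\|\lambda(f)\|^2\|\xi_i\|_2^2 + o(1)$ upon taking $\delta_0 \to 0$ in $J_i \in \Lambda(\{f\},\delta_0,\sigma_i)$. Second, the asymptotic tracial property $d_i^{-1}\tr(\sigma_i(f)^*\sigma_i(f)) \to \|f\|_2^2 \leq \|\lambda(f)\|^2$ gives $\EE\|\sigma_i(f) Y\|_2^2 = q(1-q)\sum_{j\in J_i}\|\sigma_i(f)\delta_j\|_2^2 \leq q(1-q) u_{d_i}(J_i)\|\lambda(f)\|^2 + o(1)$. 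Third, the cross term pairing $\sigma_i(f)\xi_i$ with $\sigma_i(f)\cdot 1$ vanishes identically (since $\sigma_i(f)^*\cdot 1$ is a constant vector and $\ip{\xi_i, 1} = 0$), while the remaining cross terms have mean zero modulo $O(1/d_i)$ corrections. A short algebraic simplification collapses everything to $\EE\|\sigma_i(f)\zeta_i^p\|_2^2 \leq \|\lambda(f)\|^2 qu_{d_i}(J_i)(1 - qu_{d_i}(J_i)) + o(1) = \|\lambda(f)\|^2 \EE\|\zeta_i^p\|_2^2 + o(1)$.

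For concentration, independence of the $X_j$ reduces each variance to a sum of squared inner products of the columns $v_j := \sigma_i(f)\delta_j$ against fixed vectors. The cross terms have variance $O(1/d_i)$ via the bound $\sum_j |\ip{v, v_j}|^2 \leq d_i^{-1}\|\sigma_i(f)^* v\|_2^2$; the variance of $\|\sigma_i(f) Y\|_2^2$ itself is $O(u_{d_i}(J_i)/d_i)$, using that $\ip{v_j, v_{j'}} \neq 0$ only when $\sigma_i(h)(j) = j'$ for some $h$ in the finite set $(\supp f)^{-1}\supp f$, so each row of the Gram matrix has boundedly many nonzero entries. Chebyshev's inequality then converts the expectation estimate into $\|\sigma_i(f)\zeta_i^p\|_2^2 \leq \|\lambda(f)\|^2\|\zeta_i^p\|_2^2 + \eta^2$ with probability tending to one, which implies the weak-containment inequality. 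The main obstacle is making the expectation bound tight: naive triangle-type estimates are too lossy, and one really needs the precise cancellations among the cross terms (in particular those involving the scalar $Z$ and $\sigma_i(f)\cdot 1$) for the right-hand side $qu_{d_i}(J_i)(1-qu_{d_i}(J_i))$ to emerge rather than something strictly larger.
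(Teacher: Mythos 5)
Your proposal is correct and rests on the same probabilistic engine as the paper's proof: the coordinates $X_{j}=\chi_{\{p(j)=l\}}$, $j\in J_{i}$, are i.i.d.\ Bernoulli under $\mu^{\otimes J_{i}}$, so one computes first and second moments and invokes Chebyshev. The organization of the computation is genuinely different, though. The paper works one matrix coefficient at a time: its Claims 1 and 2 show that $u_{d_{i}}(\sigma_{i}(g)p^{-1}(\{l\})\cap p^{-1}(\{l\}))$ and $u_{d_{i}}(p^{-1}(\{l\}))$ concentrate, which yields, for each fixed $g\neq e$, the closed-form asymptotic identity
\[
\ip{\sigma_{i}(g)\xi_{l},\xi_{l}}=o(1)+\mu(\{l\})^{2}\ip{\sigma_{i}(g)\zeta,\zeta},
\]
and then simply sums $\|\sigma_{i}(f)\xi_{l}\|_{2}^{2}=\sum_{g}\widehat{f^{*}f}(g)\ip{\sigma_{i}(g)\xi_{l},\xi_{l}}$ to get
\[
\|\sigma_{i}(f)\xi_{l}\|_{2}^{2}=o(1)+\widehat{f^{*}f}(e)(\mu(\{l\})-\mu(\{l\})^{2})u_{d_{i}}(J_{i})+\mu(\{l\})^{2}\|\sigma_{i}(f)\zeta\|_{2}^{2},
\]
from which the bound is immediate because the per-$g$ identity makes all cancellations automatic and there are no cross terms to track. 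Your decomposition $\zeta_{i}^{p}=q\xi_{i}+Y+Z\cdot1$ encodes exactly the same information additively: $q^{2}\|\sigma_{i}(f)\xi_{i}\|_{2}^{2}$ is the deterministic core handled by the hypothesis on $(J_{i})_{i}$; $\EE\|\sigma_{i}(f)Y\|_{2}^{2}=q(1-q)\sum_{j\in J_{i}}\|\sigma_{i}(f)\delta_{j}\|_{2}^{2}$ reproduces the $\widehat{f^{*}f}(e)(q-q^{2})u_{d_{i}}(J_{i})$ term via the asymptotic trace $\frac{1}{d_{i}}\tr(\sigma_{i}(f)^{*}\sigma_{i}(f))\to\|f\|_{2}^{2}$; and the $Z$- and cross-terms are $O(1/d_{i})$ in mean and variance. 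This is correct and arrives at the same quantity $qu_{d_{i}}(J_{i})-q^{2}u_{d_{i}}(J_{i})^{2}$, but as you yourself note it requires careful cross-term bookkeeping (including recognizing $Z=-\ip{Y,1}$ so that $Y+Z\cdot1$ is just the mean-zero projection of $Y$). The trade-off is that your route makes the ``deterministic signal plus i.i.d.\ noise'' structure explicit and the variance estimates conceptually uniform, while the paper's per-$g$ identities give a tidier algebraic path with fewer moving parts; both ultimately run on the same moment-plus-Chebyshev machinery.
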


\begin{proof}

As our claim is probabilistic, we may assume $E=\{f\}.$ We make the following two claims.

\emph{Claim 1:} For all $F\subseteq\Gamma\setminus\{e\}$ finite, for every $1\leq l\leq k,$ for every $\delta>0,$ we have
\[\mu^{\otimes J_{i}}\left(\left\{p\in \{1,\dots,k\}^{J_{i}}:\left|u_{d_{i}}(\sigma_{i}(g)p^{-1}(\{l\})\cap p^{-1}(\{l\}))-u_{d_{i}}(J_{i}\cap \sigma_{i}(g)J_{i})\mu(\{l\})^{2}\right|>\delta\mbox{ for some $g\in F$}\right\}\right)\to 0.\]
\emph{Claim 2}: For every $1\leq l\leq k,$ for every $\delta>0,$ we have\[\mu^{\otimes J_{i}}(\{p\in \{1,\dots,k\}^{J_{i}}:|u_{d_{i}}(p^{-1}(\{l\}))-\mu(\{l\})u_{d_{i}}(J_{i})|>\delta\})\to 0.\]

Suppose we accept the two claims. Then, we may find a subset $P_{i}\subseteq \{1,\dots,k\}^{J_{i}}$ so that for every sequence $p_{i}\in P_{i},$
\[u_{d_{i}}(p_{i}^{-1}(\{l\}))-\mu(\{l\})u_{d_{i}}(J_{i})\to 0,\]
\[|u_{d_{i}}(\sigma_{i}(g)p_{i}^{-1}(\{l\})\cap p_{i}^{-1}(\{l\}))-\mu(\{l\})^{2}u_{d_{i}}(J_{i}\cap \sigma_{i}(g)J_{i})|\to 0,\mbox{ for every $g\in\Gamma\setminus\{e\}$}\]
and
\[\mu^{\otimes J_{i}}(P_{i})\to 1.\]

Fix a sequence $p_{i}\in P_{i}.$ Let $\xi_{l}=\chi_{p_{i}^{-1}(\{l\})}-u_{d_{i}}(p_{i}^{-1}(\{l\}))1,$ $\zeta=\chi_{J_{i}}-u_{d_{i}}(J_{i})1.$ We use $o(1)$ for any expression which goes to zero as $i\to\infty.$ Then for any $f\in \CC(\Gamma)$ and $i\in \NN$
\begin{align}\label{E:aldgakl}
\|\sigma_{i}(f)\xi_{l}\|_{2}^{2}&=\sum_{g,h\in\Gamma}\widehat{f}(g)\overline{\widehat{f}(h)}\ip{\sigma_{i}(h)^{-1}\sigma_{i}(g)\xi_{l},\xi_{l}}\\ \nonumber
&=o(1)+\sum_{g,h\in\Gamma}\widehat{f}(g)\overline{\widehat{f}(h)}\ip{\sigma_{i}(h^{-1}g)\xi_{l},\xi_{l}}\\ \nonumber
&=o(1)+\sum_{g\in\Gamma}\widehat{f^{*}f}(g)\ip{\sigma_{i}(g)\xi_{l},\xi_{l}}\\ \nonumber
\end{align}
We have that 
\begin{equation}\label{E:aldgja}
\ip{\sigma_{i}(e)\xi_{l},\xi_{l}}=o(1)+\|\xi_{l}\|_{2}^{2}=o(1)+u_{d_{i}}(p_{i}^{-1}(\{l\}))-u_{d_{i}}(p_{i}^{-1}(\{l\}))^{2}=o(1)+\mu(\{l\})u_{d_{i}}(J_{i})-\mu(\{l\})^{2}u_{d_{i}}(J_{i})^{2}.
\end{equation}
and for $g\ne e$ we have
\begin{align*}
\ip{\sigma_{i}(g)\xi_{l},\xi_{l}}&=u_{d_{i}}(\sigma_{i}(g)p_{i}^{-1}(\{l\})\cap p_{i}^{-1}(\{l\}))-u_{d_{i}}(p_{i}^{-1}(\{l\}))^{2}\\
&=o(1)+\mu(\{l\})^{2}u_{d_{i}}(J_{i}\cap \sigma_{i}(g)J_{i})-\mu(\{l\})^{2}u_{d_{i}}(J_{i})^{2}\\
&=o(1)+\mu(\{l\})^{2}\ip{\sigma_{i}(g)\zeta,\zeta}.\\
\end{align*}
Additionally
\begin{equation}\label{E:slkagkgj}
\|\zeta\|_{2}^{2}=u_{d_{i}}(J_{i})-u_{d_{i}}(J_{i})^{2}.
\end{equation}
Combining $(\ref{E:aldgakl}),(\ref{E:aldgja}),(\ref{E:slkagkgj})$ we see that 
\begin{align*}
\|\sigma_{i}(f)\xi_{l}\|_{2}^{2}&=o(1)+\widehat{f^{*}f}(e)(\mu(\{l\})u_{d_{i}}(J_{i})-\mu(\{l\})^{2}u_{d_{i}}(J_{i})^{2})\\ \nonumber
&         +\mu(\{l\})^{2}\sum_{g\in\Gamma\setminus\{e\}}\widehat{f^{*}f}(g)\ip{\sigma_{i}(g)\zeta,\zeta}\\
&=o(1)+\widehat{f^{*}f}(e)(\mu(\{l\})-\mu(\{l\})^{2})u_{d_{i}}(J_{i})\\
&+\mu(\{l\})^{2}\sum_{g\in\Gamma}\widehat{f^{*}f}(g)\ip{\sigma_{i}(g)\zeta,\zeta}.\\
\end{align*}
By the same logic,
\[\|\sigma_{i}(f)\zeta\|_{2}^{2}=o(1)+\sum_{g\in\Gamma}\widehat{f^{*}f}(g)\ip{\sigma_{i}(g)\zeta,\zeta}.\]
Thus
\[\|\sigma_{i}(f)\xi_{l}\|_{2}^{2}=o(1)+\widehat{f^{*}f}(e)(\mu(\{l\})-\mu(\{l\})^{2})u_{d_{i}}(J_{i})+\mu(\{l\})^{2}\|\sigma_{i}(f)\zeta\|_{2}^{2}.\]
Since $J_{i}\in \Lambda_{(\sigma_{i})_{i}},$
\[\|\sigma_{i}(f)\zeta\|_{2}^{2}\leq \|\lambda(f)\|^{2}(u_{d_{i}}(J_{i})-u_{d_{i}}(J_{i})^{2})+\eta\]
for all large $i.$ Thus for all large $i,$
\begin{align*}
\|\sigma_{i}(f)\xi_{l}\|_{2}^{2}&\leq o(1)+\eta+\mu(\{l\})^{2}\|\lambda(f)\|^{2}(u_{d_{i}}(J_{i})-u_{d_{i}}(J_{i})^{2})\\
&+\|\lambda(f)\|^{2}(\mu(\{l\})-\mu(\{l\})^{2})u_{d_{i}}(J_{i})\\
&= o(1)+\eta+\|\lambda(f)\|^{2}(\mu(\{l\})u_{d_{i}}(J_{i})-\mu(\{l\})^{2}u_{d_{i}}(J_{i})^{2}).
\end{align*}
Since
\[|\|\xi_{l}\|_{2}^{2}-(\mu(\{l\})u_{d_{i}}(J_{i})-\mu(\{l\})^{2}u_{d_{i}}(J_{i})^{2})|\to 0,\]
and $\eta$ is arbitrary this proves the proposition.

We thus turn to the proof of Claim 1 and Claim 2. For Claim 1, it suffices to assume $F=\{g\}.$ We have

\[\int u_{d_{i}}(\sigma_{i}(g)p^{-1}(\{l\})\cap p^{-1}(\{l\}))\,d\mu^{\otimes J_{i}}(p)=\frac{1}{d_{i}}\sum_{j=1}^{d_{i}}\int \chi_{p^{-1}(\{l\})}(j)\chi_{p^{-1}(\{l\})}(\sigma_{i}(g)^{-1}(j))\,d\mu^{\otimes J_{i}}(p).\]
Note that $\chi_{p^{-1}(\{l\})}(j)\chi_{p^{-1}(\{l\})}(\sigma_{i}(g)^{-1}(j))$ can only be positive if $j\in \sigma_{i}(g)J_{i}\cap J_{i}.$ Thus the above sum is
\[\frac{1}{d_{i}}\sum_{j\in \sigma_{i}(g)J_{i}\cap J_{i}}\int \chi_{\{l\}}(p(j))\chi_{\{l\}}(p(\sigma_{i}(g)^{-1}(j)))\,d\mu^{\otimes J_{i}}(p).\]
Since
\[u_{d_{i}}(\{1\leq j\leq d_{i}:\sigma_{i}(g)(j)\ne j\})\to 1,\]
 we have that
\[\frac{1}{d_{i}}\sum_{j\in \sigma_{i}(g)J_{i}\cap J_{i}}\int \chi_{\{l\}}(p(j))\chi_{\{l\}}(p(\sigma_{i}(g)^{-1}(j)))\,d\mu^{\otimes J_{i}}(p)=u_{d_{i}}(\sigma_{i}(g)J_{i}\cap J_{i})\mu(\{l\})^{2}+o(1).\]
By Chebyshev's inequality, it thus suffices to show that
\[\int u_{d_{i}}(\sigma_{i}(g)p^{-1}(\{l\})\cap p^{-1}(\{l\}))^{2}\,d\mu^{\otimes J_{i}}(p)=u_{d_{i}}(\sigma_{i}(g)J_{i}\cap J_{i})^{2}\mu(\{l\})^{4}+o(1).\]

For this, we have

\[\int u_{d_{i}}(\sigma_{i}(g)p^{-1}(\{l\})\cap p^{-1}(\{l\}))^{2}\,d\mu^{\otimes J_{i}}(p)=\]
\[\frac{1}{d_{i}^{2}}\sum_{j,k\in \sigma_{i}(g)J_{i}\cap J_{i}}\int \chi_{\{l\}}(p(j))\chi_{\{l\}}(p(\sigma_{i}(g)^{-1}(j)))\chi_{\{l\}}(p(k))\chi_{\{l\}}(p(\sigma_{i}(g)^{-1}(k)))\,d\mu^{\otimes J_{i}}(p).\]

We claim that
\begin{equation}\label{E:distinctionyo}
u_{d_{i}}\otimes u_{d_{i}}(\{(j,k):|\{j,k,\sigma_{i}(g)^{-1}(j),\sigma_{i}(g)^{-1}(k)\}|=4\})\to 1,\mbox{ as $i\to\infty$}.
\end{equation}
We already know that
\[u_{d_{i}}\otimes u_{d_{i}}(\{(j,k):|\{j,k,\sigma_{i}(g)^{-1}(j)\ne j,\sigma_{i}(g)^{-1}(k)\ne k\}|\})\to 1,\mbox{ as $i\to\infty$}.\]
 Additionally,
 \[u_{d_{i}}\otimes u_{d_{i}}(\{(j,k):j\ne k\})\to 1,\mbox{ as $i\to\infty$}.\]
 Thus it suffices to show that 
\[u_{d_{i}}\otimes u_{d_{i}}(\{(j,k):j\ne k,\sigma_{i}(g)^{-1}(j)\ne \sigma_{i}(g)^{-1}(k),|\{j,k,\sigma_{i}(g)^{-1}(j),\sigma_{i}(g)^{-1}(k)\}|<4\})\to 0.\]
Suppose then that $j\ne k,\sigma_{i}(g)^{-1}(j)\ne j,\sigma_{i}(g)^{-1}(k)\ne k.$ Then, $\sigma_{i}(g)^{-1}(j)\ne \sigma_{i}(g)^{-1}(k).$ So 
\[|\{j,k,\sigma_{i}(g)^{-1}(j),\sigma_{i}(g)^{-1}(k)\}|<4\]
 if and only if $j=\sigma_{i}(g)^{-1}(k)$ or $k=\sigma_{i}(g)^{-1}(j).$ However the union of $\{(j,k):\sigma_{i}(g)^{-1}(k)=j\},\{(j,k):k=\sigma_{i}(g)^{-1}(j)\}$ has cardinality at most $2d_{i}.$ This proves (\ref{E:distinctionyo}). So

\[\int u_{d_{i}}(\sigma_{i}(g)p^{-1}(\{l\})\cap p^{-1}(\{l\}))^{2}\,d\mu^{\otimes J_{i}}(p)=o(1)+u_{d_{i}}(\sigma_{i}(g)J_{i}\cap J_{i})^{2}\mu(\{l\})^{4}.\]
This proves Claim 1.

The proof of Claim 2 is similar, and in fact has already been done by Bowen in \cite{Bow} Theorem 8.1, it can also be seen as a consequence of the law of large numbers.
\end{proof}

We now show that the set of $\ell^{q}$-independence tuples satisfying the weak containment condition is contained in the set of $\ell^{q}$-independence tuples.

\begin{proposition}\label{P:regularindependence}
Let $\Gamma$ be a countable discrete group with sofic approximation $\sigma_{i}\colon\Gamma\to S_{d_{i}}.$ Let $X$ be a  compact metrizable space with $\Gamma\actson X$ by homeomorphisms. Then for any $1\leq q\leq \infty,$
\[\IE^{\Lambda,k}_{(\sigma_{i})_{i}}(q)\subseteq\IE^{k}_{(\sigma_{i})_{i}}(q).\]
\end{proposition}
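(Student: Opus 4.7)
The plan is to use Proposition \ref{P:randomization} in concert with the Karpovsky--Milman Lemma \ref{L:GSS} to pass from the restricted class of ``weakly-contained'' partitions to arbitrary partitions, at the expense of shrinking the independence set by a definite constant factor.

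Fix $x \in \IE^{\Lambda,k}_{(\sigma_i)_i}(q)$, a finite $K \subseteq \Gamma$, and $\varepsilon > 0$. Choose any $\lambda \in (k-1,k)$ and let $\beta = \beta(\lambda) > 0$ be the constant supplied by Lemma \ref{L:GSS}. Let $M$ be the diameter of $(X,\rho)$. Choose $\varepsilon' > 0$ small enough that $\varepsilon'\beta^{-1/q} < \varepsilon$ when $q < \infty$, and just $\varepsilon' \leq \varepsilon$ when $q = \infty$. By hypothesis $\alpha := I_{\Lambda,q}(x,\rho,(\sigma_i)_i;\varepsilon',K) > 0$. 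So given any finite $F \subseteq \Gamma$ and $\delta > 0$, one can choose a finite $E \subseteq \CC(\Gamma)$, an $\eta > 0$, and a $\Lambda$-$\ell^q$-$(\rho,F,\delta,E,\eta;\varepsilon',K)$-independence sequence $(J_i)_i \in \Lambda_{(\sigma_i)_i}$ with $\limsup_i u_{d_i}(J_i) \geq \alpha/2$.

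Next, for each $i$ let $P_i \subseteq \{1,\ldots,k\}^{J_i}$ be the set of partitions $c$ such that $c^{-1}(\{l\}) \in \Lambda(E,\eta,\sigma_i)$ for every $l = 1,\ldots,k$. Applying Proposition \ref{P:randomization} with the uniform measure $\mu$ on $\{1,\ldots,k\}$ (and a union bound over the $k$ labels), one gets
\[\frac{|P_i|}{k^{|J_i|}} = \mu^{\otimes J_i}(P_i) \to 1 \quad \text{as } i \to \infty\]
along the subsequence on which $u_{d_i}(J_i) \geq \alpha/2$ (where $|J_i| \to \infty$ since $d_i \to \infty$). Consequently $|P_i|^{1/|J_i|} \geq \lambda$ for all large $i$. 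Lemma \ref{L:GSS} then produces $J_i' \subseteq J_i$ with $|J_i'| \geq \beta |J_i|$ and $P_i|_{J_i'} = \{1,\ldots,k\}^{J_i'}$.

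I claim $J_i'$ is an ordinary $\ell^q$-$(\rho,F,\delta,\sigma_i;\varepsilon,K)$-independence set for $x$. Given any $c' \colon J_i' \to \{1,\ldots,k\}$, pick $c \in P_i$ extending $c'$; by the $\Lambda$-IE property applied to $c$ we obtain $\phi \in \Map(\rho,F,\delta,\sigma_i)$ with $\max_{g \in K}\rho_{q,J_i}(g\phi(\cdot),gx_{c(\cdot)}) < \varepsilon'$. For $q < \infty$, since $c' = c|_{J_i'}$,
\[\rho_{q,J_i'}(g\phi(\cdot),gx_{c'(\cdot)})^q = \frac{1}{|J_i'|}\sum_{j \in J_i'}\rho(g\phi(j),gx_{c(j)})^q \leq \frac{|J_i|}{|J_i'|}\rho_{q,J_i}(g\phi(\cdot),gx_{c(\cdot)})^q < \frac{(\varepsilon')^q}{\beta} < \varepsilon^q,\]
and the $q = \infty$ case is immediate from the inclusion $J_i' \subseteq J_i$. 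Therefore $I_q(x,\rho,F,\delta,\sigma_i;\varepsilon,K) \geq u_{d_i}(J_i') \geq \beta u_{d_i}(J_i)$. Taking $\limsup_i$ yields $I_q(x,\rho,F,\delta,(\sigma_i)_i;\varepsilon,K) \geq \beta\alpha/2$, and then taking the infimum over $F,\delta$ gives $I_q(x,\rho,(\sigma_i)_i;\varepsilon,K) \geq \beta\alpha/2 > 0$. As $\varepsilon$ and $K$ were arbitrary, $x \in \IE^k_{(\sigma_i)_i}(q)$.

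The main conceptual obstacle is the asymmetry in the $\Lambda$-definition: the set of partitions one gets a witness for is restricted, but simultaneously the set of allowed $(J_i)$ is restricted. Proposition \ref{P:randomization} is precisely what says the restricted partition class is ``almost all'' partitions in an exponential-counting sense, and this is exactly the hypothesis needed to trigger the Sauer--Shelah shatter extraction that converts a witness for a large family of partitions into a witness for \emph{every} partition of a smaller set.
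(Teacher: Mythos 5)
Your proof is correct and follows essentially the same route as the paper's: fix $\lambda \in (k-1,k)$, invoke Proposition \ref{P:randomization} to show the admissible partitions have full exponential density, extract a shattered $J_i' \subseteq J_i$ of proportional size via the Karpovsky--Milman Lemma \ref{L:GSS}, extend an arbitrary $c'$ on $J_i'$ to an admissible $c$ on $J_i$, and control the $\rho_{q,J_i'}$ distance by the $\rho_{q,J_i}$ distance with a $\beta(\lambda)^{-1/q}$ loss. The only differences are cosmetic: you pre-shrink to $\varepsilon'$ so the conclusion lands on $\varepsilon$ exactly (whereas the paper carries a $\varepsilon/\beta(\lambda)$ and relies on $\varepsilon$ being arbitrary), and you take the slightly sharper $\beta^{-1/q}$ exponent rather than the paper's cruder $\beta^{-1}$ bound.
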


\begin{proof}

Fix a compatible metric $\rho$ on $X.$  Let $x=(x_{1},\dots,x_{k})\in \IE^{\Lambda,k}_{(\sigma_{i})_{i}}(q).$ Let $\varepsilon>0,$ and $K\subseteq\Gamma$ finite be given.  Set
\[\alpha=I_{\Lambda,q}(x,\rho,(\sigma_{i})_{i};\varepsilon,K).\]
Fix $k-1<\lambda<k$ and let $\beta(\lambda)$ be as in the Sauer-Shelah Lemma.

Suppose we are given a finite $F\subseteq\Gamma,$ and $\delta>0.$ Choose a finite $E\subseteq\CC(\Gamma),$ and a $\eta>0$ so that
\[I_{\Lambda}(x,\rho,F,\delta,E,\eta,(\sigma_{i})_{i};\varepsilon,K)\geq \frac{\lambda}{k}\alpha.\]
 Let $(J_{i})_{i=1}^{\infty}\in \Lambda_{(\sigma_{i})}$ be a $\ell^{q}-(\rho,F,\delta,E,\eta)$-independence sequence for $x$ with
\[\limsup_{i\to \infty}u_{d_{i}}(J_{i})\geq \frac{\lambda}{k}\alpha.\]
Let $\Lambda_{k}(E,\eta,J_{i})$ be the set of all $c\colon J_{i}\to \{1,\dots,k\}$ so that $c^{-1}(\{l\})\in \Lambda(E,\eta,\sigma_{i})$ for $1\leq l\leq k.$ By Proposition \ref{P:randomization} we have
\[\lim_{i\to \infty}\frac{|\Lambda_{k}(E,\eta,J_{i})|}{k^{|J_{i}|}}=1.\]
 So by Lemma \ref{L:GSS}, for all large $i$ we can find $J_{i}'\subseteq J_{i}$ with
\[\Lambda_{k}(E,\eta,J_{i})\big|_{J_{i}'}=\{1,\dots,k\}^{J_{i}'}\]
and
\[u_{d_{i}}(J_{i}')\geq \beta(\lambda)u_{d_{i}}(J_{i}).\]
We claim that $J_{i}'$ is a $\ell^{q}-(\rho,F,\delta,\sigma_{i})$-independence set for $x$ for all large $i.$

For this, let $c'\colon J_{i}'\to \{1,\dots,k\}.$ Then, there is a $c\in \Lambda_{k}(E,\eta,J_{i})$ so that
\[c\big|_{J_{i}'}=c'.\]
By the definition of $(\rho,F,\delta,E,\eta,(\sigma_{i})_{i})$ independence, there is a $\phi\in\Map(\rho,F,\delta,\sigma_{i})$ so that
\[\max_{g\in K}\rho_{q,J_{i}}(g\phi(\cdot),gx_{c(\cdot)})<\varepsilon.\]
As $c\big|_{J_{i}'}=c',$ we find that
\[\max_{g\in K}\rho_{q,J_{i}'}(g\phi(\cdot),gx_{c'(\cdot)})<\frac{\varepsilon}{\beta(\lambda)}.\]
As
\[\limsup_{i\to\infty}u_{d_{i}}(J_{i}')\geq \beta(\lambda)\alpha,\]
we see that
\[I_{q}\left(\rho,F,\delta,(\sigma_{i});\frac{\varepsilon}{\beta(\lambda)},K\right)\geq \beta(\lambda)\alpha.\]
Taking the infimum over all $F,\delta$ completes the proof.

\end{proof}

We now discuss the analogue of Proposition 4.16 from \cite{KerrLi2} for independence tuples with a weak containment condition. Recall that if $X,Y$ are compact metrizable spaces and $\Gamma\actson X, \Gamma\actson Y$ by homeomoprhisms, then a continuous, $\Gamma$-equivariant, surjection $\pi\colon X\to Y$ is called a \emph{factor map}. If there is a factor map $\pi\colon X\to Y,$ we call $Y$ a \emph{factor} of $X.$ 

\begin{proposition}\label{P:permanence} Let $\Gamma$ be a countable discrete sofic group with sofic approximation $\sigma_{i}\colon\Gamma\to S_{d_{i}}.$ Fix $1\leq q\leq \infty.$ Let $X$ be a compact metrizable space with $\Gamma\actson X$ by homeomorphisms.

(1): If $\IE^{\Lambda,2}_{(\sigma_{i})_{i}}(q)\setminus\{(x,x):x\in X\}$ is nonempty, then $h_{(\sigma)_{i}}(X,\Gamma)>0.$

(2): We have that $\IE^{\Lambda,k}_{(\sigma_{i})_{i}}(q)$ is a closed $\Gamma$-invariant subset of $X^{k},$ where $\Gamma\actson X^{k}$ is the product action.

(3): Let $Y$ be a compact metrizable space with $\Gamma\actson Y$ by homeomorphisms and $\pi\colon X\to Y$ a factor map. Then
\[\pi^{k}(\IE^{\Lambda,k}_{(\sigma_{i})_{i}}(q,X,\Gamma))\subseteq\IE^{\Lambda,k}_{(\sigma_{i})_{i}}(q,Y,\Gamma).\]

(4): Suppose that $Z$ is a closed $\Gamma$-invariant subset of $X,$ then $\IE^{\Lambda,k}_{(\sigma_{i})_{i}}(Z,\Gamma)\subseteq\IE^{\Lambda,k}_{(\sigma_{i})_{i}}(X,\Gamma).$

\end{proposition}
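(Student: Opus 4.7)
All four parts follow reasonably standard patterns from the Kerr--Li framework, so my plan is to indicate the right reduction in each case and flag the one place the $\Lambda$-condition requires a small amount of care.

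For part (1), the plan is to compose our Proposition \ref{P:regularindependence} with the earlier equivalence results. Namely, any nondiagonal pair $(x_1,x_2)\in \IE^{\Lambda,2}_{(\sigma_i)_i}(q)$ lies in $\IE^{2}_{(\sigma_i)_i}(q)$ by Proposition \ref{P:regularindependence}, hence in $\IE^{2}_{(\sigma_i)_i}(\infty)=\IE^{2}_{(\sigma_i)_i}$ by Lemma \ref{L:whatq?} and the subsequent Corollary. Proposition 4.16(3) of \cite{KerrLi2} then gives $h_{(\sigma_i)_i}(X,\Gamma)>0$.

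For part (2), $\Gamma$-invariance is the usual substitution of $K$ by $Kg$: given $g\in\Gamma$, a finite $K\subseteq\Gamma$, and $\varepsilon>0$, the same $(J_i)_i$ and $\phi$'s that witness independence for $x=(x_1,\dots,x_k)$ against $K'=Kg$ witness independence for $gx$ against $K$, since $\rho_{q,J_i}(hg\phi(\cdot),hgx_{c(\cdot)})$ is precisely what one bounds. For closedness, one argues that if $x^{(n)}\to x$ in $X^{k}$ and each $x^{(n)}\in \IE^{\Lambda,k}_{(\sigma_i)_i}(q)$, then given $\varepsilon,K$, pick $n$ so large that $\max_{g\in K}\rho(gx^{(n)}_l,gx_l)<\varepsilon/2$ for every $l$, and use the $(\rho,F,\delta,E,\eta;\varepsilon/2,K)$-independence sequence for $x^{(n)}$; the triangle inequality in $\rho_{q,J_i}$ upgrades it to a $(\rho,F,\delta,E,\eta;\varepsilon,K)$-independence sequence for $x$, with the same lower density.

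For part (3), the trick is to choose the pseudometric on $X$ cleverly, and this is the only genuinely delicate step. Fix a compatible metric $\rho_X'$ on $X$ and a compatible metric $\rho_Y$ on $Y$, and put $\widetilde\rho_X=\rho_X'+\rho_Y\circ(\pi\times\pi)$; this is a continuous dynamically generating pseudometric on $X$ which by Lemma \ref{L:whatmetric?} can be used to compute $\IE^{\Lambda,k}_{(\sigma_i)_i}(q,X,\Gamma)$. Given $x\in\IE^{\Lambda,k}_{(\sigma_i)_i}(q,X,\Gamma)$, a finite $K\subseteq\Gamma$ and $\varepsilon>0$, fix any finite $F\subseteq\Gamma$ and $\delta>0$, choose $E\subseteq\CC(\Gamma)$ and $\eta>0$, and let $(J_i)_i\in\Lambda_{(\sigma_i)_i}$ be a $\Lambda$-$\ell^q$-$(\widetilde\rho_X,F,\delta,E,\eta;\varepsilon,K)$-independence sequence for $x$ realizing the infimum to within a factor. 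For any allowed partition $c\colon J_i\to\{1,\dots,k\}$, pick $\phi\in\Map(\widetilde\rho_X,F,\delta,\sigma_i)$ with $\max_{g\in K}\widetilde\rho_{X,q,J_i}(g\phi(\cdot),gx_{c(\cdot)})<\varepsilon$. Since $\rho_Y\circ(\pi\times\pi)\leq \widetilde\rho_X$ pointwise, the composition $\pi\circ\phi$ lies in $\Map(\rho_Y,F,\delta,\sigma_i)$ and witnesses
\[\max_{g\in K}\rho_{Y,q,J_i}(g(\pi\circ\phi)(\cdot),g\pi(x_{c(\cdot)}))<\varepsilon.\]
The sequence $(J_i)_i$ is unchanged, so it still belongs to $\Lambda_{(\sigma_i)_i}$ and the allowed partitions are still those with pieces in $\Lambda(E,\eta,\sigma_i)$. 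Hence $\pi^{k}(x)\in\IE^{\Lambda,k}_{(\sigma_i)_i}(q,Y,\Gamma)$.

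For part (4), let $\rho$ be a compatible metric on $X$ and use its restriction to $Z$ as the dynamically generating pseudometric on $Z$. Any $\phi\in Z^{d_i}$ is automatically an element of $X^{d_i}$, and $\Map(\rho|_Z,F,\delta,\sigma_i)\subseteq \Map(\rho,F,\delta,\sigma_i)$; the $\Lambda$-condition on $(J_i)_i$ and on the partitions depends only on $\Gamma$ and $(\sigma_i)_i$, not on the space, so any $\Lambda$-independence sequence witnessing a tuple in $\IE^{\Lambda,k}_{(\sigma_i)_i}(Z,\Gamma)$ directly witnesses the same tuple in $\IE^{\Lambda,k}_{(\sigma_i)_i}(X,\Gamma)$. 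The only step requiring foresight is the construction of the pseudometric in part (3); everything else is a transfer argument.
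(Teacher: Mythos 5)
Parts (1), (3), and (4) are fine, but your argument for $\Gamma$-invariance in part (2) has a genuine gap.

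\textbf{Part (2), $\Gamma$-invariance.} You claim the same $(J_i)_i$ and the same $\phi$'s that witness independence for $x$ against $K' = Kg$ witness independence for $gx$ against $K$, "since $\rho_{q,J_i}(hg\phi(\cdot),hgx_{c(\cdot)})$ is precisely what one bounds." But that is not what one bounds. For $gx$ against $K$ with microstate $\phi$ one must control $\rho_{q,J_i}(h\phi(\cdot), h(gx)_{c(\cdot)}) = \rho_{q,J_i}(h\phi(\cdot), hgx_{c(\cdot)})$, whereas independence of $x$ against $Kg$ with $\phi$ controls $\rho_{q,J_i}(hg\phi(\cdot), hgx_{c(\cdot)})$. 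The first argument is $h\phi(\cdot)$, the second $hg\phi(\cdot)$ — these are not the same, and nothing forces them to be close. The microstate must be transformed; the obvious candidate $g\phi$ also fails, because verifying $g\phi \in \Map(\rho,F,\delta,\sigma_i)$ asks $\rho_{2,d_i}(g\phi\circ\sigma_i(h), hg\phi)$ to be small, and pushing $g$ past $h$ is illegitimate. The paper instead uses $\psi = \alpha_g\circ\phi\circ\sigma_i(g)^{-1}$ (with $\alpha_g(y)=gy$), which does land in $\Map(\rho,F,\delta,\sigma_i)$ once $\phi\in\Map(\rho,F',\delta',\sigma_i)$ with $F'\supseteq \{g^{-1}\}\cup g^{-1}F$; the independence sets then become $\sigma_i(g)J_i$ and the partitions are carried along by $\sigma_i(g)$. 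This is also precisely the spot where the $\Lambda$-condition requires the care you flagged but did not supply: one must check $(J_i)_i\in\Lambda_{(\sigma_i)_i}$ implies $(\sigma_i(g)J_i)_i\in\Lambda_{(\sigma_i)_i}$ (and similarly for the pieces of the partition), which follows from $\sigma_i(g)\chi_J = \chi_{\sigma_i(g)J}$, $\sigma_i(g)1=1$, $\sigma_i(f)\sigma_i(g)\approx\sigma_i(fg)$, and $\|\lambda(fg)\|=\|\lambda(f)\|$. Your closedness argument via the triangle inequality is correct.

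\textbf{Part (3).} Your approach is correct and in fact cleaner than the paper's. The paper fixes independent compatible metrics on $X$ and $Y$, has no uniform Lipschitz control of $\pi$, and recovers an estimate by a Chebyshev-type argument on the proportion of $j\in J_i$ with $\rho(\phi(j),x_{c(j)})<\varepsilon$. Building $\widetilde\rho_X = \rho_X' + \rho_Y\circ(\pi\times\pi)$ — legitimate by Lemma \ref{L:whatmetric?} — makes $\pi$ $1$-Lipschitz, so both the $\Map$-containment $\pi^{d_i}(\Map(\widetilde\rho_X,F,\delta,\sigma_i))\subseteq\Map(\rho_Y,F,\delta,\sigma_i)$ and the $\rho_{q,J_i}$ bound transfer without loss, the $(J_i)_i$ and the allowed partitions being unchanged. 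This is the right way to package the estimate.

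\textbf{Parts (1), (4).} These match the paper: (1) is Proposition \ref{P:regularindependence} composed with Lemma \ref{L:whatq?} and Proposition 4.16(3) of Kerr--Li; (4) is immediate from the inclusion of $\Map$-spaces since the $\Lambda$-conditions depend only on $\Gamma$ and $(\sigma_i)_i$.
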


\begin{proof}

(1): This is a consequence of the preceding Proposition and Proposition 4.16 (3) in \cite{KerrLi2}.

(2): Fix a compatible metric $\rho$ on $X$ and $g\in\Gamma.$ Let $\alpha_{g}(x)=gx.$ Then for any finite $F\subseteq\Gamma,$ for any $\delta>0,$ there is a $\delta'>0$ so that if
\[\phi\in \Map(\rho,\{g^{-1}\}\cup (g^{-1}F)\cup\{g\},\delta',\sigma_{i})\]
then
\[\alpha_{g}\circ \phi\circ \sigma_{i}(g)^{-1}\in \Map(\rho,F,\delta,\sigma_{i}),\]
for all large $i.$ Thus,
\[\IE^{\Lambda,k}_{(\sigma_{i})_{i}}\]
is $\Gamma$-invariant. The fact that it is closed is a trivial consequence of the definitions.

(3): Let $\rho,\rho'$ be compatible metrics on $X,Y.$ Let $M,M'$ be the diameter of $\rho,\rho'.$ Suppose we are given a $\varepsilon'>0,$ and let $\eta'>0$ depend upon $\varepsilon$ to be determined shortly. Choose a $\varepsilon>0$ so that
\[\rho(x,y)<\varepsilon\]
implies
\[\rho'(\pi(x),\pi(y))<\eta'.\]
Let $\eta>0$ depend upon $\varepsilon$ in a manner to be determined later.
Given a finite $F'\subseteq\Gamma$ finite and a $\delta'>0$ we can find a finite $F\subseteq\Gamma$ and a $\delta>0$ so that
\[\pi^{d_{i}}(\Map(\rho,F,\delta,\sigma_{i}))\subseteq\Map(\rho',F',\delta',\sigma_{i}),\]
(this follows by the same argument in Lemma 2.3 of \cite{KerrLi2}).
Let  $x\in X^{k},$ and let $J_{i}$ be a $(x,\rho,F,\delta;\eta,\{e\})$-independence set, and suppose we are given
\[c\colon J_{i}\to \{1,\dots,k\}.\]
Choose $\phi\in\Map(\rho,F,\delta,\sigma_{i})$ so that
\[\rho_{q,J_{i}}(\phi,x_{c(\cdot)})<\eta.\]
Then
\[u_{J_{i}}(\{j\in J_{i}:\rho(\phi(j),x_{c(j)})<\varepsilon\})\geq (1-\frac{\eta^{q}}{\varepsilon^{q}}).\]
By our choice of $\varepsilon,$
\[\rho_{q}(\pi\circ \phi,\pi(x_{c(\cdot)}))^{q}\leq (\eta')^{q}+M\left(\frac{\eta^{q}}{\varepsilon^{q}}\right).\]
Choosing $\eta,\eta'$ appropriately we have that $J_{i}$ is a $(\pi(x),\rho,F',\delta';\varepsilon',\{e\})$ independence set.

(4): This is trivial.

\end{proof}

We now proceed to show that $\ell^{q}-\Lambda_{(\sigma_{i})}$-tuples are the same as $\ell^{q}$-independence tuples in the amenable case. For this, we will need the following general fact: if $\Gamma$ is an amenable group and $\pi\colon\Gamma\to \mathcal{U}(\mathcal{H})$ is a unitary representation, $\pi$ is weakly contained in $\lambda.$ See \cite{BHV} Theorem G.3.2 for a proof.

\begin{proposition}\label{P:automaticLR} Let $\Gamma$ be a countable discrete amenable group. Let $\sigma_{i}\colon\Gamma\to S_{d_{i}}$ be a sofic approximation. Then every sequence of subsets of $\{1,\dots,d_{i}\}$ is in $\Lambda_{(\sigma_{i})_{i}}.$
\end{proposition}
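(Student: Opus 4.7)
The plan is to exploit two facts: the ultrafilter reformulation of membership in $\Lambda_{(\sigma_i)_i}$ recorded in the excerpt (just before Definition \ref{D:regularsets}'s surrounding discussion), and the classical result (\cite{BHV} Theorem G.3.2) that for an amenable group $\Gamma$, every unitary representation of $\Gamma$ is weakly contained in the left regular representation $\lambda$. Once these two ingredients are lined up, the statement is almost immediate; the proposition is really a packaging of ``amenable $\Rightarrow$ everything factors through $C^{*}_{\lambda}(\Gamma)$.''

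Concretely, I would fix an arbitrary sequence $(J_i)_i$ with $J_i \subseteq \{1,\dots,d_i\}$ and an arbitrary free ultrafilter $\omega \in \beta\NN \setminus \NN$. Form the ultraproduct Hilbert space $L^{2}(A,u_{\omega})$ and the unitary representation $\sigma_{\omega}\colon \Gamma \to \mathcal{U}(L^{2}(A,u_{\omega}))$ as in the excerpt. Amenability of $\Gamma$ applied to $\sigma_\omega$ yields that $\sigma_{\omega}$ is weakly contained in $\lambda$; equivalently, $\sigma_\omega$ extends to a $*$-representation of $C^{*}_{\lambda}(\Gamma)$, and for every $f \in \CC(\Gamma)$ we have $\|\sigma_\omega(f)\| \leq \|\lambda(f)\|$.

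Setting $\chi^{o}_{J_{\omega}} = (\chi_{J_i} - u_{d_i}(J_i)1)_{i\to\omega}$ and $\mathcal{K} = \overline{\Span\{\sigma_\omega(g)\chi^{o}_{J_\omega} : g \in \Gamma\}}$, weak containment of $\sigma_\omega$ in $\lambda$ descends to every subrepresentation, so in particular $\Gamma \actson \mathcal{K}$ is weakly contained in $\lambda$. By the ultraproduct characterization of $\Lambda_{(\sigma_i)_i}$ noted in the excerpt, this is exactly the statement that $(J_i)_i \in \Lambda_{(\sigma_i)_i}$. Since $(J_i)_i$ was arbitrary, every sequence of subsets of $\{1,\dots,d_i\}$ lies in $\Lambda_{(\sigma_i)_i}$.

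There is essentially no hard step here, because the setup section already put the ultraproduct machinery in place and translated Definition \ref{D:regularsets} into a statement about weak containment inside $L^{2}(A,u_\omega)$; the only thing to verify is the implication from amenability. If one preferred to avoid ultrafilters entirely, the alternative route would be a direct argument: for any finite $F \subseteq \CC(\Gamma)$ and $\delta > 0$, suppose by way of contradiction that along a subsequence some $f \in F$ violates the inequality by more than $\delta$; extract a free ultrafilter $\omega$ containing that subsequence, apply $\|\sigma_{\omega}(f)\| \leq \|\lambda(f)\|$ to $\chi^{o}_{J_\omega}$, and note that $\|\sigma_\omega(f)\chi^{o}_{J_\omega}\|_2 = \lim_{i\to\omega} \|\sigma_i(f)(\chi_{J_i}-u_{d_i}(J_i)1)\|_2$ and $\|\chi^{o}_{J_\omega}\|_2 = \lim_{i\to\omega} \|\chi_{J_i}-u_{d_i}(J_i)1\|_2$, giving a contradiction.
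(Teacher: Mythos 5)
Your proposal is correct and follows the same route as the paper: the paper's proof is the one-line statement ``Automatic from the remarks about ultraproducts and weak containment following Definition \ref{D:regularsets},'' which is exactly the ultrafilter reformulation of $\Lambda_{(\sigma_i)_i}$ combined with the fact (stated just before the proposition, citing \cite{BHV} Theorem G.3.2) that amenability forces every unitary representation of $\Gamma$ to be weakly contained in $\lambda$. You have simply written out the details that the paper elides, including the useful observation that one can dodge ultrafilters entirely by running the contrapositive along a subsequence.
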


\begin{proof}

Automatic from the remarks about ultraproducts and weak containment following Definition \ref{D:regularsets}.

\end{proof}

\begin{proposition}\label{T:amenablecase} Let $\Gamma$ be a countable discrete amenable group. Let $\sigma_{i}\colon\Gamma\to S_{d_{i}}$ be a sofic approximation. Let $X$ be a compact metrizable space with $\Gamma\actson X$ by homeomorphisms. Then for any $1\leq q\leq\infty,$
\[\IE_{(\sigma_{i})_{i}}^{k}(q)=\IE_{(\sigma_{i})_{i}}^{\Lambda,k}(q).\]
\end{proposition}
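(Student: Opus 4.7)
The plan is to prove the two inclusions separately, and the key observation is that in the amenable case the $\Lambda$-condition contributes nothing, so the equality should fall out immediately from what has already been established.

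First, the inclusion $\IE^{\Lambda,k}_{(\sigma_{i})_{i}}(q) \subseteq \IE^{k}_{(\sigma_{i})_{i}}(q)$ is already handled in full generality by Proposition \ref{P:regularindependence}, so amenability is not needed there. For the reverse inclusion, I would fix $x = (x_{1},\dots,x_{k}) \in \IE^{k}_{(\sigma_{i})_{i}}(q)$, a continuous dynamically generating pseudometric $\rho$ on $X$, and finite $K \subseteq \Gamma$ and $\varepsilon > 0$. Set $\alpha = I_{q}(x,\rho,(\sigma_{i})_{i};\varepsilon,K) > 0$. Given any finite $F \subseteq \Gamma$, $\delta > 0$, finite $E \subseteq \CC(\Gamma)$, and $\eta > 0$, choose for each $i$ a subset $J_{i} \subseteq \{1,\dots,d_{i}\}$ which is a $\ell^{q}$-$(\rho,F,\delta,\sigma_{i};\varepsilon,K)$-independence set for $x$ of maximal cardinality, so that $\limsup_{i\to\infty} u_{d_{i}}(J_{i}) \geq \alpha$.

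Now I would invoke Proposition \ref{P:automaticLR}: since $\Gamma$ is amenable, the sequence $(J_{i})_{i}$ automatically lies in $\Lambda_{(\sigma_{i})_{i}}$, with no work required. The crucial point is that the definition of $\Lambda$-$\ell^{q}$-independence sequence only demands the existence of a good $\phi$ for those partitions $c\colon J_{i} \to \{1,\dots,k\}$ whose fibers satisfy the weak containment condition $c^{-1}(\{l\}) \in \Lambda(E,\eta,\sigma_{i})$; this is a \emph{subset} of all possible partitions. But $J_{i}$ being a $\ell^{q}$-independence set in the sense of Section 2 already provides such a $\phi \in \Map(\rho,F,\delta,\sigma_{i})$ for \emph{every} $c\colon J_{i} \to \{1,\dots,k\}$. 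Hence $(J_{i})_{i}$ is trivially a $\Lambda$-$\ell^{q}$-$(\rho,F,\delta,E,\eta;\varepsilon,K)$-independence sequence for $x$.

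This yields $I_{\Lambda,q}(x,\rho,F,\delta,E,\eta,(\sigma_{i})_{i};\varepsilon,K) \geq \alpha$, and since the resulting bound is independent of $E$ and $\eta$, taking the supremum over those and the infimum over $(F,\delta)$ gives $I_{\Lambda,q}(x,\rho,(\sigma_{i})_{i};\varepsilon,K) \geq \alpha > 0$. As $\varepsilon > 0$ and finite $K \subseteq \Gamma$ were arbitrary, we conclude $x \in \IE^{\Lambda,k}_{(\sigma_{i})_{i}}(q)$, completing the reverse inclusion. There is essentially no obstacle here: the amenability hypothesis is doing all the real work through Proposition \ref{P:automaticLR}, and the rest is unpacking definitions — the only mild subtlety is remembering that the $\Lambda$-version only restricts the admissible partitions, never requiring one to produce a partition, so imposing a weaker condition on $c$ can only make the required existence of $\phi$ easier, not harder.
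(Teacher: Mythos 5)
Your proof is correct and is essentially the paper's argument, written forward rather than by contradiction: you invoke Proposition \ref{P:regularindependence} for $\IE^{\Lambda,k}_{(\sigma_{i})_{i}}(q) \subseteq \IE^{k}_{(\sigma_{i})_{i}}(q)$, and for the converse observe that by Proposition \ref{P:automaticLR} any maximal $\ell^{q}$-independence set $J_{i}$ automatically forms a sequence in $\Lambda_{(\sigma_{i})_{i}}$, while the $\Lambda$-version demands a good $\phi$ only for a \emph{restricted} class of colourings $c$, a condition already met since $J_{i}$ works for all $c$. The paper phrases the reverse inclusion as a proof by contradiction (extracting a subsequence with $(J^{(p)}_{i_{l}})_{l}\notin\Lambda_{(\sigma_{i_{l}})_{l}}$ and contradicting Proposition \ref{P:automaticLR}), but the underlying idea — amenability trivialises the weak containment constraints — is the same, and your direct version is cleaner.
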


\begin{proof}

Fix a compatible metric $\rho$ on $X.$
By Proposition \ref{P:automaticLR} we have
\[\IE_{(\sigma_{i})_{i}}^{k}(q)\supseteq \IE_{(\sigma_{i})_{i}}^{\Lambda,k}(q).\]
Conversely, let $(x_{1},\dots,x_{k})$ be  in $\IE_{(\sigma_{i})_{i}}^{k}$ but not in $\IE_{(\sigma_{i})_{i}}^{\Lambda,k}.$ Choose a $\varepsilon>0$ and a finite $K\subseteq\Gamma$ so that
\[I_{\Lambda,q}(x,\rho,(\sigma_{i})_{i};\varepsilon,K)=0.\]
Since
\[I_{q}(x,\rho,(\sigma_{i})_{i};\varepsilon,K)>0,\]
we can find a finite $F\subseteq\Gamma$ and a $\delta>0$ so that
\[I_{\Lambda,q}(x,\rho,F,\delta,(\sigma_{i})_{i};\varepsilon,K)<I_{q}(x,\rho,F,\delta,(\sigma)_{i};\varepsilon,K).\]
Choose  a finite $E\subseteq\Gamma,$ and a $\eta>0$ so that
\[I_{\Lambda,q}(x,\rho,F,\delta,E,\eta,(\sigma_{i})_{i};\varepsilon,K)<I_{q}(x,\rho,F,\delta,(\sigma_{i})_{i};\varepsilon,K).\]
Choose $(J_{i})_{i\geq 1}\in\Lambda_{(\sigma_{i})_{i}}$ so that  $(J_{i})_{i\geq 1}$ is a $\Lambda-\ell^{q}-(\rho,F,\delta,E,\eta;\vaerpsilon,K)$ independence sequence  with
\[\limsup_{i\to\infty}u_{d_{i}}(J_{i})=I_{\Lambda}(x,\rho,F,\delta,\sigma_{i};\varespilon,K).\]
Since
\[I_{\Lambda,q}(x,\rho,F,\delta,E,\eta,(\sigma_{i})_{i};\varepsilon,K)<I_{q}(x,\rho,F,\delta,(\sigma_{i})_{i};\varepsilon,K)=\limsup_{i\to\infty}u_{d_{i}}(J_{i})\]
we can find a subsequence $i_{l},$ and a partition
\[J_{i_{l}}=J_{i_{l}}^{(1)}\cup\dots\cup J_{i_{l}}^{(k)}\]
so that there is a $1\leq p_{l}\leq k$ with $J_{i_{l}}^{(p_{l})}\notin \Lambda(E,\eta,\sigma_{i_{l}}).$ Passing to a further subsequence, we may assume that $p_{l}=p$ is constant. Thus,
\[(J^{(p)}_{i_{l}})_{l\geq 1}\notin \Lambda_{(\sigma_{i_{l}})_{l}},\]
contradicting Proposition \ref{P:automaticLR}.

\end{proof}

\section{A Generalization of Deninger's Problem For Sofic Groups}

Let $\Gamma$ be a countable discrete group. An \emph{algebraic action} of $\Gamma$ is an action $\Gamma\actson X$ by automorphisms, where $X$ is a compact, metrizable, abelian group. An equivalent way to describe this family of actions is to start with a countable $\ZZ(\Gamma)$ module $A,$ and let $\widehat{A}=\Hom(A,\TT)$ where $\TT=\RR/\ZZ$ and $\widehat{A}$ is given the topology of pointwise convergence. We then have the algebraic action $\Gamma\actson \widehat{A}$ by
\[(g\chi)(a)=\chi(g^{-1}a).\]
By Pontryagin duality, all algebraic actions arise in this manner. We will mainly be interested in the case $A=\ZZ(\Gamma)^{\oplus n}/r(f)(\ZZ(\Gamma)^{\oplus m}),$ where $f\in M_{m,n}(\ZZ(\Gamma)),$ in this case $\widehat{A}$ is denoted $X_{f}.$  An interesting aspect of the subject, which has seen great mileage in recent years, (see e.g. \cite{Schmidt} Lemma 1.2, Theorem 1.6, \cite{ChungLi} Theorem 3.1, \cite{Den},\cite{DenSchmidt},\cite{LindSchmidt2},\cite{LiThom}, \cite{LiLiang},\cite{Me4},\cite{Me5}) is that dynamical properties of algebraic actions (i.e. those which only depend upon $\Gamma\actson \widehat{A}$ as an action on a compact metrizable space or probability space) such as entropy and independence tuples of $\Gamma\actson \widehat{A}$ are related to functional analytic objects associated to $\Gamma.$ One relevant object is the group  von Neumann algebra.

The group von Neumann algebra $L(\Gamma)$ is defined to be $\overline{\lambda(\CC(\Gamma))}^{\textnormal{WOT}},$ where $\textnormal{WOT}$ is the weak-operator topology.  Define $\tau\colon L(\Gamma)\to \CC$ by $\tau(x)=\ip{x\delta_{e},\delta_{e}}.$ For $A\in M_{n}(L(\Gamma))$ define
\[\Tr\otimes \tau(A)=\sum_{j=1}^{n}\tau(A_{jj}).\]
Since $L(\Gamma)\subseteq B(\ell^{2}(\Gamma)),$ we can identify $M_{m,n}(L(\Gamma))\subseteq B(\ell^{2}(\Gamma)^{\oplus n},\ell^{2}(\Gamma)^{\oplus m})$ in the natural way. For $x\in M_{m,n}(L(\Gamma)),$ we use $\|x\|_{\infty}$ for the operator norm of $x$ (as an operator $\ell^{2}(\Gamma)^{\oplus n}\to \ell^{2}(\Gamma)^{\oplus m}$). We also use
\[\|x\|_{2}^{2}=\Tr\otimes \tau(x^{*}x).\]
Since we identify $\CC(\Gamma)\subseteq L(\Gamma),$ we will us the same notation for elements of $M_{m,n}(\CC(\Gamma)).$ We shall also identify $\CC(\Gamma)^{\oplus n}\cong M_{1,n}(\CC(\Gamma))$ and use the same notation. For $f\in GL_{n}(L(\Gamma)),$ the Fuglede-Kadison determinant is defined by $\exp\Tr\otimes \tau(\log|f|)$ (here the notation is as in \cite{Me5}). A particular case of Theorem 4.4 in \cite{Me5} shows that if $\Gamma$ is sofic, then
 	 \[h_{(\sigma_{i})_{i}}(X_{f},\Gamma)=\log \Det_{L(\Gamma)}(f),\textnormal{ for $f\in GL_{n}(L(\Gamma))$}\]
(in fact this is true when $f$ is injective as an operator on $\ell^{2}(\Gamma)^{\oplus n}$). When $\Gamma$ is sofic, it is known by \cite{ElekLip} that  for $f\in M_{n}(\ZZ(\Gamma))$ we have $\det_{L(\Gamma)}(f)\geq 1.$ By multiplicativity of Fuglede-Kadison determinants (see \cite{Luck} Theorem 3.14 (1))  it follows that if $f\in GL_{n}(\ZZ(\Gamma)),$ then $\Det_{L(\Gamma)}(f)=1.$ In \cite{DenQuest}, (see question 26) Deninger asked a partial converse to this result. Namely, if $f\in M_{n}(\ZZ(\Gamma))$ is invertible in $GL_{n}(\ell^{1}(\Gamma))$ but not invertible in $M_{n}(\ZZ(\Gamma))$ is $\det_{L(\Gamma)}(f)>1?$

  From Theorem 4.4 of \cite{Me5}, as well as Theorem 6.7 and Proposition 4.16 (3) in \cite{KerrLi2} we can automatically answer Deninger's problem affirmatively for sofic groups. Thus, we automatically have the following.
 \begin{theorem}\label{T:itseasy} Let $\Gamma$ be a countable discrete sofic group and $f\in M_{n}(\ZZ(\Gamma))\cap GL_{n}(\ell^{1}(\Gamma)).$ If $f$ is not in $GL_{n}(\ZZ(\Gamma)),$ then
 \[\Det_{L(\Gamma)}(f)>1.\]
 \end{theorem}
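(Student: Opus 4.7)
The plan is to reduce the theorem to a short chain of previously established results. Since $f \in M_n(\ZZ(\Gamma)) \cap GL_n(\ell^1(\Gamma)) \subseteq GL_n(L(\Gamma))$ and $\Gamma$ is sofic, Theorem 4.4 of \cite{Me5} supplies the entropy identity
\[
h_{(\sigma_i)_i}(X_f,\Gamma) \;=\; \log \Det_{L(\Gamma)}(f).
\]
Hence it suffices to show that the sofic topological entropy on the right is strictly positive. By Proposition 4.16(3) of \cite{KerrLi2}, this positivity is equivalent to the existence of a non-diagonal $(\sigma_i)_i$-IE-pair in $X_f^2$, so the task reduces further to producing such a pair.

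To produce one, I would apply Theorem 6.7 of \cite{KerrLi2}, which under the $\ell^1$-invertibility hypothesis $f \in GL_n(\ell^1(\Gamma))$ asserts that every $k$-tuple of points in $X_f$ is a $(\sigma_i)_i$-IE-tuple. Consequently, as soon as $X_f$ contains two distinct points, any such pair is automatically a non-diagonal IE-pair and the proof is complete. So the only step still requiring verification is that $X_f$ is non-trivial.

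For this, note that by Pontryagin duality $X_f$ is a one-point space if and only if the countable discrete abelian group $A = \ZZ(\Gamma)^{\oplus n}/r(f)\ZZ(\Gamma)^{\oplus n}$ vanishes, i.e.\ if and only if $r(f)$ surjects onto $\ZZ(\Gamma)^{\oplus n}$. The hypothesis $f \in GL_n(\ell^1(\Gamma))$ forces $r(f)$ to be injective on $\ZZ(\Gamma)^{\oplus n}$, so a surjection would make $r(f)$ a $\Gamma$-equivariant bijection of the left $\ZZ(\Gamma)$-module $\ZZ(\Gamma)^{\oplus n}$; such a map is necessarily right-convolution by some $g \in M_n(\ZZ(\Gamma))$, and that $g$ is then a two-sided inverse of $f$ inside $M_n(\ZZ(\Gamma))$, contradicting $f \notin GL_n(\ZZ(\Gamma))$. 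No genuine obstacle arises — the argument is a short assembly of the three cited statements — and the only place meriting attention is this non-triviality step, which is the bridge between the algebraic hypothesis on $f$ and the dynamical conclusion supplied by Theorem 6.7 of \cite{KerrLi2}.
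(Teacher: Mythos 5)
Your proposal is correct and matches the paper's own proof, which the paper dispatches in one sentence by citing precisely the same three ingredients: Theorem 4.4 of \cite{Me5} for the identity $h_{(\sigma_i)_i}(X_f,\Gamma)=\log\Det_{L(\Gamma)}(f)$, Theorem 6.7 of \cite{KerrLi2} for the fact that every tuple in $X_f$ is an IE-tuple when $f\in GL_n(\ell^1(\Gamma))$, and Proposition 4.16(3) of \cite{KerrLi2} to convert a nondiagonal IE-pair into positive entropy. Your spelled-out verification that $X_f$ is nontrivial exactly when $f\notin GL_n(\ZZ(\Gamma))$ is the same observation the paper records explicitly later (in the proof of the corollary to Theorem \ref{T:mainsection4}), so nothing is missing or redundant.
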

In this section, we show how one can use $\Lambda_{(\sigma_{i})}$-IE-tuples to generalize Deninger's conjecture in the case of sofic groups. In particular, in this section we show the following.

\begin{theorem}\label{T:mainsection4} Let $\Gamma$ be a countable discrete sofic group. If $f\in M_{n}(\ZZ(\Gamma))\cap GL_{n}(L(\Gamma)),$ but is not in $GL_{n}(\ZZ(\Gamma)),$ then $\Det_{L(\Gamma)}(f)>1.$ \end{theorem}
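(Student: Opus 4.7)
The plan is to reduce Theorem~\ref{T:mainsection4} to the intermediate theorem flagged in the introduction: that every $k$-tuple of points of $X_f$ is a $\Lambda_{(\sigma_i)_i}$-$\IE$-$k$-tuple (equivalently, by Proposition~\ref{P:regularindependence}, a $(\sigma_i)_i$-$\IE$-$k$-tuple) under the hypothesis $f \in M_n(\ZZ(\Gamma)) \cap GL_n(L(\Gamma))$. Granting this intermediate statement, the theorem follows quickly: since $f \notin GL_n(\ZZ(\Gamma))$ the module $\ZZ(\Gamma)^{\oplus n}/r(f)\ZZ(\Gamma)^{\oplus n}$ is nontrivial, so its Pontryagin dual $X_f$ contains distinct points $x \neq y$, and $(x,y)$ is then a nondiagonal $\Lambda$-$\IE$-pair. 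Proposition~\ref{P:permanence}(1) upgrades this to $h_{(\sigma_i)_i}(X_f,\Gamma) > 0$, and Theorem~4.4 of \cite{Me5}, which under $f \in GL_n(L(\Gamma))$ identifies $h_{(\sigma_i)_i}(X_f,\Gamma) = \log \Det_{L(\Gamma)}(f)$, gives $\Det_{L(\Gamma)}(f) > 1$.

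The intermediate theorem is the real work, and I would use the $\ell^2$-version of $\Lambda$-independence throughout, legitimate by Lemma~\ref{L:whatq?}. This is essential because the only inverse of $r(f)$ that we can invoke is in $M_n(L(\Gamma))$ and is bounded in operator norm rather than $\ell^1$-norm, so all error estimates must be Hilbert-space estimates. Fix $(x_1,\dots,x_k) \in X_f^k$ together with data $F,K \subseteq \Gamma$, $E \subseteq \CC(\Gamma)$, and tolerances $\varepsilon,\delta,\eta>0$. For the candidate independence sequences I would take $(J_i)_i$ of positive density in $\Lambda_{(\sigma_i)_i}$ (their existence follows from Proposition~\ref{P:randomization} with appropriate parameters), and Proposition~\ref{P:randomization} applied to a uniform random coloring $c\colon J_i \to \{1,\dots,k\}$ shows that we need only construct the approximating $\phi \in \Map(\rho,F,\delta,\sigma_i)$ for colorings whose every piece $c^{-1}(\{l\})$ lies in $\Lambda(E,\eta,\sigma_i)$. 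The construction of $\phi$ would then proceed by lifting each $x_l$ via the Pontryagin pairing with $\ZZ(\Gamma)^{\oplus n}/r(f)\ZZ(\Gamma)^{\oplus n}$, using $\chi_{c^{-1}(\{l\})}$ as a masking function, and inverting a polynomial truncation of $\sigma_i(r(f))$ on the assembled sum to produce a $\TT^n$-valued configuration on $\{1,\dots,d_i\}$.

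The main obstacle, and the very reason the Sections~2 and~3 modifications of the Kerr-Li notion were introduced, is the error estimate for this inversion. The estimate requires bounding $\|\sigma_i(\alpha)(\chi_{c^{-1}(\{l\})} - u_{d_i}(c^{-1}(\{l\}))1)\|_2$ for polynomials $\alpha$ coming from an approximate inverse of $r(f)$ in the left regular representation, and the weak containment condition on $c^{-1}(\{l\})$ is exactly what allows this to be majorized by
\[
\|\lambda(\alpha)\| \cdot \|\chi_{c^{-1}(\{l\})} - u_{d_i}(c^{-1}(\{l\}))1\|_2 + \eta,
\]
with the reduced $C^*$-norm $\|\lambda(\cdot)\|$ in place of the $\ell^1$-norm used by Chung-Li, Kerr-Li, and Elek in the $\ell^1$-invertible case. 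The technical heart will be choosing $E$, $\eta$, the truncation of $r(f)^{-1}$, and the finite sets $F,K$ compatibly so that the resulting $L^2$-errors can be simultaneously driven below $\varepsilon$ uniformly over admissible colorings, while also verifying that the candidate $\phi$ actually lies in $\Map(\rho,F,\delta,\sigma_i)$. Once these estimates are assembled, the intermediate theorem --- and hence Theorem~\ref{T:mainsection4} --- is complete.
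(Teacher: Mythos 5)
Your high-level architecture is correct and matches the paper: reduce to showing that every $k$-tuple of $X_f$ is an $\ell^2$-$\Lambda_{(\sigma_i)_i}$-IE-tuple, pass through Proposition~\ref{P:regularindependence} to ordinary IE-tuples, then invoke the entropy formula $h_{(\sigma_i)_i}(X_f,\Gamma)=\log\Det_{L(\Gamma)}(f)$ from \cite{Me5}. You also correctly identify the role of the weak containment condition: it replaces the $\ell^1$-norm estimate used by Chung-Li, Kerr-Li, and Elek with the reduced $C^*$-norm estimate $\|\lambda(\alpha)\|$, which is what makes $f\in GL_n(L(\Gamma))$ enough.

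However, there are two concrete pieces of machinery that your sketch omits and that are not just ``technical assembly''; each is an independent idea that the argument fails without. First, you propose taking $(J_i)_i$ ``of positive density in $\Lambda_{(\sigma_i)_i}$, their existence following from Proposition~\ref{P:randomization}.'' That proposition does not produce such sequences --- it is about random colorings of a given $(J_i)_i \in \Lambda_{(\sigma_i)_i}$. What the paper actually uses is Lemma~\ref{L:Bernoulliembedding}, which builds $(J_i)_i\in\Lambda_{(\sigma_i)_i}$ of positive lower density whose translates $\{\sigma_i(x)J_i\}_{x\in K}$ are pairwise \emph{disjoint} for a prescribed finite symmetric $K$; the construction relies on modeling a Bernoulli shift by the sofic approximation and on the orthogonality of $L^2((E,u_E)^\Gamma)\ominus\CC 1$ to the constants, from which the weak-containment estimate is deduced. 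This disjointness is essential: it is precisely what makes the evaluation $\sigma_i(g)\xi_\varepsilon(j)=\widehat{\alpha_{c(j)}\phi_\varepsilon^*}(g)$ for $j\in J_i^{(s)}$ into an exact identity, since for $x\in K\cap g^{-1}K$ only $x=g^{-1}$ contributes. Without it your ``masking function'' sum has uncontrolled cross-terms. Second, you omit the reduction to the dense set of points $Q(\alpha\phi^*)$ with $t(\alpha)=0$, where $\phi=f^{-1}\in M_n(L(\Gamma))$ and $t\colon\CC(\Gamma)\to\CC$ is the augmentation. This is not merely a convenience: the condition $t(\alpha_s)=0$ is exactly what allows one to replace $\chi_{J_i^{(s)}}$ by the mean-zero vector $\chi_{J_i^{(s)}}-u_{d_i}(J_i^{(s)})1$ in the error term $\sigma_i(f)\xi_\delta-\sum_s\sigma_i(\alpha_s^*)\chi_{J_i^{(s)}}$ (using $\sigma_i(\beta)1=t(\beta)1$), and the $\Lambda(E,\eta,\sigma_i)$ hypothesis on the pieces of the coloring only controls norms of mean-zero vectors. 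Your proposal as written would run into exactly this mismatch and leave the $L^2$-error uncontrolled. As a smaller point, your appeal to Lemma~\ref{L:whatq?} for $q$-independence of $\IE^{\Lambda,k}$ is misplaced --- the paper only establishes independence of $q$ for the unmodified $\IE^k$, and explicitly records that $q$-independence of the $\Lambda$-version is open for $q=\infty$; the relevant fact for you is the H\"{o}lder observation after Definition~3.4 together with $\IE^{\Lambda,k}(q)\subseteq\IE^k(q)$.
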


	To illustrate the significance of our generalization, we should mention examples of elements in $M_{n}(\ZZ(\Gamma))$ which are in $GL_{n}(L(\Gamma))$ but are not in $GL_{n}(\ell^{1}(\Gamma)).$ Let $E\subseteq\Gamma,$ and let 
	\[\Delta_{E}=1-\frac{1}{|E|}\sum_{g\in E}g\in \QQ(\Gamma).\]
Note that $\Delta_{E}$ is never invertible in $\ell^{1}(\Gamma).$ To see this, consider the homomorphism
 \[t\colon \ell^{1}(\Gamma)\to \CC\]
 given by
 \[t(f)=\sum_{g\in\Gamma}f(g).\]
 Since $t(\Delta_{E})=0,$ we know that $\Delta_{E}$ is not invertible in $\ell^{1}(\Gamma).$ 
	
	First suppose that $\Gamma$ is a nonamenable group. Let $E\subseteq\Gamma$ be finite and symmetric, i.e. $E=E^{-1}$.
 By nonamenability of $\Gamma,$ we may choose $E$ so that 
\[\frac{1}{|E|}\sum_{g\in E}\lambda(g)\leq 1-\varepsilon\]
for some $\varepsilon>0$ (see e.g. \cite{BO} Theorem 2.6.8 (8)). Thus $\lambda(\Delta_{E})\geq \varepsilon$ as an operator on $\ell^{2}(\Gamma)$ and thus is invertible. So $|E|\Delta_{E}\in\ZZ(\Gamma)\cap L(\Gamma)^{\times}$ but is not in $\ell^{1}(\Gamma)^{\times}$ and thus we always have examples of elements in $\ZZ(\Gamma)\cap L(\Gamma)^{\times}$ which are not invertible in $\ell^{1}(\Gamma)$ if $\Gamma$ is nonamenable. So Theorem \ref{T:mainsection4} applies to these elements whereas Theorem \ref{T:itseasy} does not.

	Theorem \ref{T:mainsection4} is also new in the amenable case. Thus we wish to mention examples when $\Gamma$ is amenable of elements $f\in\ZZ(\Gamma)\cap L(\Gamma)^{\times}$ which are not in $\ell^{1}(\Gamma)^{\times}.$ We say that $\Gamma$ has subexponential growth if for any finite $E\subseteq \Gamma$ we have that 
 \[|\{g_{1}\dots g_{n}:g_{1},\dots,g_{n}\in E\}|^{1/n}\to_{n\to\infty}1.\]
 If $\Gamma$ has subexponential growth then $\alpha\in\CC(\Gamma)$ is invertible in $L(\Gamma)$ if and only if it is invertible in $\ell^{1}(\Gamma)$ by a result of Nica (see \cite{NicaIn}, page 3309). Recall that a group is \emph{virtually nilpotent} if it has a finite index subgroup which is nilpotent. Every virtually nilpotent group has polynomial, and hence subexponential, growth. So if $\Gamma$ is virtually nilpotent then $\alpha\in\CC(\Gamma)$ is invertible in $L(\Gamma)$ if and only if it is invertible in $\ell^{1}(\Gamma).$ The situation is very different when $\Gamma$ does not have subexponential growth. For example, if $\Gamma$ contains a free subsemigroup on two letters, then there are elements $\alpha\in\ZZ(\Gamma)$ which are invertible in $L(\Gamma),$ but not in $\ell^{1}(\Gamma).$ For example, if $g,h$ generate a free subsemigroup in $\Gamma,$ then 
\[\pm 3 e-(e+g+g^{2})h\]
is such an element (see Appendix A of \cite{Li2} for a detailed argument). If $\Gamma$ is a finitely-generated, elementary amenable, not virtually nilpotent group, then a result of Chou say that $\Gamma$ contains a nonabelian free subsemigroup (see \cite{Chou}). Additionally Frey in \cite{Frey} showed that if $\Gamma$ is an amenable group which contains a nonamenable subsemigroup, then it contains a nonabelian free group. For a concrete instance of Chou's result consider the group $\RR\rtimes (\RR\setminus\{0\})$ which is $\RR\times (\RR\setminus\{0\})$ as set but with operation
\[(a,b)(c,d)=(a+bc,bd).\]
If $0\leq a\leq 1/2,$ the subsemigroup generated by $(1,a),(1-a)$ is a free nonabelian semigroup.

	For our purposes, it will be important to use $\ell^{2}-\Lambda_{(\sigma_{i})}-\IE-k$-tuples. Following the methods in our proof of Theorem 4.4 of \cite{Me5}, given an inclusion $B\subseteq A$ of $\ZZ(\Gamma)$-modules, we will want a notion of $\Lambda-(\sigma_{i})_{i}$-IE-tuples corresponding to the inclusion $\widehat{A/B}\subseteq \widehat{A}.$ The use of $\Lambda_{(\sigma_{i})_{i}}-$ independence tuples for inclusions will ease extending Theorem \ref{T:itseasy} to the case when $f$ is only invertible in $M_{n}(L(\Gamma)).$
	
	We will need to recall some notation from \cite{Me5}, as the perturbative techniques there will remain to be important here. For $x\in\RR,$ we use
\[|x+\ZZ|=\inf_{l\in\ZZ}|x-l|.\]
Thus $|\theta|$ makes sense for any $\theta\in\RR/\ZZ.$ Let us recall a definition from \cite{Me4}.
\begin{definition}\label{D:smallonkernel}\emph{Let $\Gamma$ be a countable discrete sofic group with sofic approximation $\sigma_{i}\colon\Gamma\to S_{d_{i}}.$ Let $B\subseteq A$ be countable $\ZZ(\Gamma)$-modules, and let $\rho$ be a continuous dynamically generating pseudometric on $\widehat{A}.$ For finite $F\subseteq\Gamma,D\subseteq B$ and $\delta>0$ we let $\Map(\rho|D,F,\delta,\sigma_{i})$ be all $\phi\in \Map(\rho,F,\delta,\sigma_{i})$ so that}
\[\max_{b\in D}\frac{1}{d_{i}}\sum_{j=1}^{d_{i}}|\phi(j)(b)|^{2}<\delta^{2}.\]
\end{definition}
The main point of this definition is that it is shown in Proposition 4.3 of \cite{Me4} that an element of $\phi\in \Map(\rho|D,F,\delta,\sigma_{i})$ is close to a map
\[\widetilde{\phi}\colon\{1,\dots,d_{i}\}\to \widehat{A/B}\]
which is in $\Map(\rho,F',\delta',\sigma_{i})$ with $\delta'\to 0$ and $F'$ increasing to $\Gamma$ as $\delta\to 0,$ and $F$ increases to $\Gamma,$ and $D$ increases to $B.$ So $\Map(\rho\big|_{\widehat{A/B}},\dots)$ and $\Map(\rho|D,\dots)$ are asymptotically the same notion. A crucial defect of the argument in \cite{Me4}  is that the proof of existence of $\widetilde{\phi}$ is nonconstructive, using a compactness argument in an essential way. However, due to its nonconstructive nature it allows one to create more elements in $\Map(\rho,F,\delta,\sigma_{i})$ than one would initially believe exist. This will be precisely the use here.

We need a similar perturbative idea specifically related to the case of $X_{f}$ for $f\in M_{m,n}(\ZZ(\Gamma)).$ Fix a countable discrete sofic group  $\Gamma$ with sofic approximation $\sigma_{i}\colon\Gamma\to S_{d_{i}}.$ For $x\in \ell^{2}_{\RR}(d_{i},u_{d_{i}})^{\oplus n},$ define
\[\|x\|_{2,(\ZZ^{d_{i}})^{\oplus n}}=\inf_{l\in(\ZZ^{d_{i}})^{\oplus n}}\left(\sum_{j=1}^{n}\|x(j)-l(j)\|_{\ell^{2}(d_{i},u_{d_{i}})}^{2}\right)^{1/2}.\]
For $f\in M_{m,n}(\ZZ(\Gamma)),$ we let
\[\Xi_{\delta}(\sigma_{i}(f))=\{\xi\in (\RR^{d_{i}})^{\oplus n}:\|\sigma_{i}(f)\xi\|_{2,(\ZZ^{d_{i}})^{\oplus m}}<\delta\}.\]

\begin{definition}\emph{ Let $\Gamma$ be a countable discrete sofic group with sofic approximation $\sigma_{i}\colon\Gamma\to S_{d_{i}}.$ Let $B\subseteq A$ be countable $\ZZ(\Gamma)$-modules. Let $\rho$ be a continuous dynamically generating pseudometric for $\Gamma\actson \widehat{A},$ and $1\leq p\leq \infty.$ Fix $x\in\widehat{A/B}^{k}$ and $1\leq q\leq\infty.$ For finite $K,F\subseteq\Gamma,D\subseteq B,E\subseteq\CC(\Gamma)$ and $\eta,\delta>0$ we say that a sequence $J_{i}\subseteq\{1,\dots,d_{i}\}$ is a} $\ell^{q}-\Lambda-(x,\rho|D,F,\delta,E,\eta,(\sigma_{i})_{i};\varepsilon,K)$-independence sequence \emph{if $(J_{i})_{i\geq 1}\in \Lambda_{(\sigma_{i})}$ and for all $c\colon J_{i}\to \{1,\dots,k\}$ so that $c^{-1}(\{l\})\in \Lambda(E,\eta,\sigma_{i})$ there is a $\phi\in\Map(\rho|D,F,\delta,\sigma_{i})$ so that}
\[\max_{g\in K}\rho_{q,J_{i}}(g\phi(\cdot),gx_{c(\cdot)})<\varepsilon.\]
\emph{ We let $I_{\Lambda,q}(x,\rho|D,F,\delta,E,\eta,(\sigma_{i});\varepsilon,K,B\subseteq A)$ be the supremum of}
\[\limsup_{i\to\infty}u_{d_{i}}(J_{i})\]
\emph{ over all $\ell^{q}-\Lambda-(x,\rho|D,F,\delta,E,\eta,(\sigma_{i})_{i};\varepsilon,K)$-independence sequences $(J_{i})_{i\geq 1}.$ Set}
\[I_{\Lambda,q}(x,\rho|D,F,\delta,(\sigma_{i})_{i};\varepsilon,K,B\subseteq A)=\sup_{\substack{\textnormal{ finite} E\subseteq\CC(\Gamma),\\ \eta>0}}I_{\Lambda,q}(x,\rho|D,F,\delta,E,\eta,(\sigma_{i})_{i};\varepsilon,K),\]
\[I_{\Lambda,q}(x,\rho,(\sigma_{i})_{i};\varepsilon,K,B\subseteq A)=\inf_{\substack{\textnormal{finite} D\subseteq B,\\ \textnormal{ finite} F\subseteq\Gamma,\\ \delta>0}}I_{\Lambda,q}(x,\rho|D,F,\delta(\sigma_{i});\varepsilon,K,B\subseteq A).\]
\emph{We say that $x$ is a} $\ell^{q}-\Lambda_{(\sigma_{i})}-\IE-k-$tuple for $B\subseteq A$ \emph{ if for all $\varepsilon>0$ and for all $K\subseteq\Gamma$ finite we have}
\[I_{q}(x,\rho,(\sigma_{i})_{i};\varepsilon,K,B\subseteq A)>0.\]
\emph{We let $\IE^{\Lambda,k}_{(\sigma_{i})_{i}}(\rho,q,B\subseteq A)$ be the set of all $\ell^{q}-\Lambda_{(\sigma_{i})_{i}}-\IE-k$-tuples for $B\subseteq A$.}
\end{definition}

\begin{definition}\emph{ Let $\Gamma$ be a countable discrete sofic group with sofic approximation $\sigma_{i}\colon\Gamma\to S_{d_{i}},$ let $f\in M_{m,n}(\ZZ(\Gamma)).$ Fix $x\in X_{f}^{k}$ and $1\leq q\leq \infty.$ For finite $K\subseteq\Gamma,E\subseteq\CC(\Gamma),$ and $\delta,\eta,\vaerpsilon>0$ we say that a sequence $J_{i}\subseteq\{1,\dots,d_{i}\}$ is a} $\ell^{q}-\Lambda_{(\sigma_{i})}-(x,\delta,E,\eta,(\sigma_{i});\varespilon,K)$ independence sequence for $f$  \emph{if $(J_{i})_{i\geq 1}\in \Lambda_{(\sigma_{i})_{i}}$ and for all $c\colon J_{i}\to \{1,\dots,k\}$ with $c^{-1}(\{l\})\in \Lambda(E,\eta,\sigma_{i})$ there is a $\xi\in \Xi_{\delta}(\sigma_{i}(f))$ so that}
\[\max_{g\in K}\frac{1}{|J_{i}|}\sum_{j\in J_{i}}\sum_{l=1}^{n}|\xi(\sigma_{i}(g)^{-1}(j))(l)-x_{c(j)}(l)(g)|^{2}<\varepsilon^{2}.\]
\emph{We let $I_{\Lambda,q}^{f}(x,\delta,E,\eta,(\sigma_{i});\varepsilon,K)$ be the supremum of}
\[\limsup_{i\to \infty}u_{d_{i}}(J_{i}),\]
\emph{where $J_{i}$ is a $\ell^{q}-\Lambda_{(\sigma_{i})_{i}}-(x,\delta,E,\eta,(\sigma_{i});\varepsilon,K)$ independence sequence. We set}
\[I_{\Lambda,q}^{f}(x,\delta,(\sigma_{i})_{i};\varepsilon,K)=\sup_{\substack{\textnormal{finite} E\subseteq \CC(\Gamma),\\ \eta>0}}I_{\Lambda,q}^{f}(x,\delta,E,\eta,(\sigma_{i})_{i};\varespilon,K)\]
\[I_{\Lambda,q}^{f}(x,(\sigma_{i})_{i};\varepsilon,K)=\inf_{\delta>0}I_{\Lambda,q}^{f}(x,\delta,(\sigma_{i})_{i};\varepsilon,K).\]
\emph{We say that $x$ is a} $\ell^{q}-\Lambda_{(\sigma_{i})_{i}}-\IE-k$-tuple for $f$ \emph{ if for all $\varepsilon>0$ and $K\subseteq\Gamma$ finite we have}
\[I_{\Lambda,q}^{f}(x,(\sigma_{i})_{i};\varepsilon,K)>0.\]
\emph{We use $\IE_{(\sigma_{i})}^{\Lambda,k}(q,f)$ for the set of all $\ell^{q}-\Lambda_{(\sigma_{i})_{i}}-\IE-k$-tuples for $f.$}
\end{definition}

\begin{proposition}\label{P:IEreduction} Let $\Gamma$ be a countable discrete sofic group with sofic approximation $\Sigma.$

(a): Let $B\subseteq A$ be countable $\ZZ(\Gamma)$-modules. Then for $1\leq q<\infty,$ the set of $q-\Lambda_{(\sigma_{i})}$-IE-tuples for $B\subseteq A,$ is the same  as the set of $\ell^{q}-\Lambda_{(\sigma_{i})_{i}}$-IE-tuples for $\Gamma\actson\widehat{A/B}.$

(b): If $f\in M_{m,n}(\ZZ(\Gamma))$ then the set of $\Lambda_{(\sigma_{i})_{i}}$-IE-tuples for $\Gamma\actson X_{f},$ is the same as the set of $\Lambda_{(\sigma_{i})_{i}}$-IE-tuples for $f.$

\end{proposition}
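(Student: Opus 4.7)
The plan is to establish both equivalences by noting that the weak containment condition $\Lambda_{(\sigma_i)_i}$ is a condition on the sequence $(J_i)$ and on the partition pieces $c^{-1}(\{l\})$ alone, and is untouched by any change in the map space used to witness the approximate independence. So the $\Lambda$-quantifiers transfer verbatim between the two formulations, and it suffices in each part to match the corresponding spaces of approximate maps up to $\rho_{q,d_i}$-distance, with the outer infima and suprema over $(F,\delta,D,E,\eta)$ absorbing the resulting parameter slippage. The overall structure will mirror the reduction (without the $\Lambda$-constraint) carried out in the proof of Theorem 4.4 of \cite{Me5}; those computations are reused almost verbatim.

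For part (a), the easy inclusion $\supseteq$ is immediate: any $\phi$ taking values in $\widehat{A/B}$ evaluates to $0$ on $B$, so, viewed through the inclusion $\widehat{A/B}\hookrightarrow \widehat{A}$, it automatically lies in $\Map(\rho|D,F,\delta,\sigma_i)$ for every finite $D\subseteq B$. For the reverse inclusion, given a witness $\phi\in \Map(\rho|D,F,\delta,\sigma_i)$, I would invoke Proposition 4.3 of \cite{Me4} to produce a $\widetilde\phi\colon\{1,\dots,d_i\}\to \widehat{A/B}$ with $\rho_{2,d_i}(\phi,\widetilde\phi)$ arbitrarily small as $(F,\delta,D)$ refine, and with $\widetilde\phi\in \Map(\rho\big|_{\widehat{A/B}},F',\delta',\sigma_i)$. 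Using a H\"older comparison between $\rho_{2,d_i}$ and $\rho_{q,d_i}$ (valid because $\rho$ is bounded on the compact $X$), substituting $\widetilde\phi$ for $\phi$ in $\max_{g\in K}\rho_{q,J_i}(g\phi(\cdot),gx_{c(\cdot)})<\varepsilon$ yields a witness for the $\widehat{A/B}$-formulation at the cost of a controlled slippage in $\varepsilon$, handled by the outer quantifiers exactly as in Proposition \ref{P:permanence} (3).

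For part (b), take $A=\ZZ(\Gamma)^{\oplus n}$ and $B=r(f)\ZZ(\Gamma)^{\oplus m}$, so $X_f=\widehat{A/B}$; by part (a) it then suffices to identify $\Lambda_{(\sigma_i)_i}$-IE-tuples for $B\subseteq A$ with $\Lambda_{(\sigma_i)_i}$-IE-tuples for $f$. The correspondence is the standard $\phi\leftrightarrow \xi_\phi$, where the coordinates $\xi_\phi(j)(l)\in\RR/\ZZ$ are the values $\phi(j)(e_l)$ on the standard generators $e_1,\dots,e_n$ of $\ZZ(\Gamma)^{\oplus n}$, with a chosen $\RR$-lift $\widetilde\xi_\phi\in(\RR^{d_i})^{\oplus n}$. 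Taking the dynamically generating pseudometric on $\widehat{A}$ to be the pullback of the product $\RR/\ZZ$-metric under evaluation on $e_1,\dots,e_n$, the expression $\max_{g\in K}\rho_{q,J_i}(g\phi(\cdot),gx_{c(\cdot)})$ becomes exactly $\max_{g\in K}\bigl(|J_i|^{-1}\sum_{j\in J_i}\sum_l|\xi(\sigma_i(g)^{-1}(j))(l)-x_{c(j)}(l)(g)|^q\bigr)^{1/q}$. Meanwhile, the small-on-$D$ condition for $D\subseteq r(f)\ZZ(\Gamma)^{\oplus m}$ (generated by the $r(f)e_k$) translates, via approximate multiplicativity of $\sigma_i$ on the finite support of the entries of $f$, to $\widetilde\xi_\phi\in\Xi_{\delta'}(\sigma_i(f))$ for a suitable $\delta'$; conversely, starting from $\xi\in\Xi_\delta(\sigma_i(f))$, Proposition 4.3 of \cite{Me4} again lets one replace $\xi$ by a genuine near-homomorphism, producing the desired $\phi$. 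This gives both containments.

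The main obstacle will be the reverse inclusion in (a), since the promotion $\phi\mapsto\widetilde\phi$ is nonconstructive and the dependence $(F,\delta,D)\mapsto (F',\delta')$ is not given explicitly in \cite{Me4}. Care is needed to verify that the infima over $(F,\delta)$ and suprema over $(D,E,\eta)$ defining $I_{\Lambda,q}$ genuinely agree rather than giving only a one-sided bound; this will be handled by a two-step parameter choice, fixing the $(F,\delta,D)$ side first and then reading off the matching $(F',\delta')$ side, in the spirit of Kerr--Li's factor-map argument from \cite{KerrLi2} reproduced in Proposition \ref{P:permanence} (3).
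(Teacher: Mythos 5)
Your proposal is correct and follows essentially the same route as the paper: the $\Lambda$-conditions on $(J_i)$ and on the partition pieces are insensitive to the change of map space, so the work reduces to a perturbative identification of $\Map(\rho|D,F,\delta,\sigma_i)$ with $\Map(\rho\big|_{\widehat{A/B}},\cdot)$ in (a), and of $\Map(\rho|D',\cdot)$ with $\Xi_\delta(\sigma_i(f))$ via the $\phi\leftrightarrow\xi_\phi$ correspondence in (b), with the outer quantifiers absorbing the slippage. One small correction on citations: for the direction from $\xi\in\Xi_\delta(\sigma_i(f))$ to a map in $\Map(\rho|D',F',\delta',\sigma_i)$, the relevant ingredient is the proof of Proposition 3.6 of \cite{Me5} (which controls $\max_{b\in D'}\frac{1}{d_i}\sum_j|\ip{\phi_{\xi+(\ZZ^{d_i})^{\oplus n}}(j),b}|^2$), not Proposition 4.3 of \cite{Me4}; the map $\phi_\zeta$ built from a torus-valued $\zeta$ automatically satisfies the approximate equivariance estimates without any further replacement, so no ``near-homomorphism promotion'' step is needed there. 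Also, when you pass from $\rho_{q,d_i}$-closeness of $\phi$ and $\widetilde\phi$ to $\rho_{q,J_i}$-closeness, make explicit that the penalty factor $u_{d_i}(J_i)^{-1/q}$ is bounded because $\limsup_i u_{d_i}(J_i)\geq\alpha/2$ with $\alpha=I_{\Lambda,q}(x,\rho,(\sigma_i)_i;\varepsilon,K,B\subseteq A)$ chosen before $(D',F',\delta')$; this is what lets you force $\kappa_q(D',F',\delta')/\alpha<\varepsilon$ and is the precise content of the ``two-step parameter choice'' you gesture at.
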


\begin{proof}

(a): Fix $k\in \NN,$ and a continuous dynamically generating pseudometric $\rho$ on $\widehat{A}.$   Use the pseudometric $\rho\big|_{\widehat{A/B}\times \widehat{A/B}}$ on $\widehat{A/B}.$ It is clear that
\[\IE^{\Lambda,k}_{(\sigma_{i})}(\rho,q,B\subseteq A)\supseteq \IE^{\Lambda,k}_{(\sigma_{i})}(q,\widehat{A/B},\Gamma).\]

	For the reserve inclusion let
\[x\in \IE^{\Lambda,k}_{(\sigma_{i})_{i}}(\rho,q,B\subseteq A).\]
Fix finite $K,F\subseteq\Gamma, E\subseteq\CC(\Gamma)$ and $\delta,\eta,\varepsilon>0.$
Set
\[\alpha=I_{\Lambda,q}(x,\rho,(\sigma_{i})_{i};\varepsilon,K,B\subseteq A).\]
Choose finite $F'\subseteq\Gamma,D'\subseteq B,\delta'>0$ in a manner depending upon $F,\delta,\eta$ to be determined later. Let $(J_{i})_{i\geq 1}$ be a $\ell^{q}-\Lambda_{(\sigma_{i})_{i}}-(x,\rho|D,E,\eta,F,\delta,(\sigma_{i})_{i};\varepsilon,K)$ independence set with
\[\limsup_{i\to \infty}u_{d_{i}}(J_{i})\geq \frac{\alpha}{2}.\]
Suppose we are given
\[c\colon J_{i}\to \{1,\dots,k\}\]
with
\[c^{-1}(\{l\})\in \Lambda(E,\eta,\sigma_{i}).\]
Choose a $\phi\in\Map(\rho|D',F',\delta',\sigma_{i})$ with
\[\max_{g\in K}\rho_{q,J_{i}}(g\phi(\cdot),gx_{c(\cdot)})<\varepsilon.\]
Arguing as in Proposition 4.3 in \cite{Me4}, we may find a $\widetidle{\phi}\colon\{1,\dots,d_{i}\}\to \widehat{A/B}$ so that
\[\max_{g\in K\cup F}\max(\rho_{q,d_{i}}(g\phi(\cdot),g\widetilde{\phi}(\cdot)),\rho_{2,d_{i}}(g\phi(\cdot),g\widetilde{\phi}(\cdot)))\leq\kappa_{q}(D',F',\delta'),\]
with
\[\lim_{(D',F',\delta')}\kappa_{q}(D',F',\delta')=0.\]
Here $(D_{1},F_{1},\delta_{1})\leq (D_{2},F_{2},\delta_{2})$ if $D_{1}\subseteq D_{2},F_{1}\subseteq F_{2}$ and $\delta_{1}\geq \delta_{2}.$  Thus
\[\widetidle{\phi}\in \Map(\rho\big|_{\widehat{A/B}\times\widehat{A/B}},F,\delta'+\kappa_{q}(D',F',\delta'),\sigma_{i})\]
and
\[\max_{g\in K\cup F}\rho_{q,J_{i}}(g\widetilde{\phi}(\cdot),gx_{c(\cdot)})<\varepsilon+u_{d_{i}}(J_{i})^{-1}\kappa_{q}(D',F',\delta').\]
Choose $D',F',\delta'$ so that
\[\kappa_{q}(D',F',\delta')+\delta'<\delta\]
\[\kappa_{q}(D',F',\delta')\frac{1}{\alpha}<\varepsilon.\]
Since $\alpha$ does not depend upon $D',F',\delta'$ this is possible. We then see that
\[\widetilde{\phi}\in \Map(\rho\big|_{\widehat{A/B}\times \widehat{A/B}},F,\delta,\sigma_{i})\]
and since
\[\limsup_{i\to\infty}u_{d_{i}}(J_{i})\geq \alpha/2\]
we have
\[\max_{g\in K}\rho_{q,J_{i}}(g\widetilde{\phi}(\cdot),gx_{c(\cdot)})<5\varepsilon\]
for all large $i.$

(b) View $X_{f}\subseteq(\TT^{\Gamma})^{\oplus n},$ and let $\rho$ be the dynamically generating pseudometric on $(\TT^{\Gamma})^{\oplus n}$ defined by
\[\rho(\theta_{1},\theta_{2})^{2}=\sum_{l=1}^{n}|\theta_{1}(l)(e)-\theta_{2}(l)(e)|^{2},\]
where $|\cdot|$ on $\TT$ is in the sense defined in the remarks preceding Definition $\ref{D:smallonkernel}.$ Given $\zeta\in (\TT^{d_{i}})^{\oplus n}$ we can define
\[\phi_{\zeta}\colon\{1,\dots,d_{i}\}\to (\TT^{\Gamma})^{\oplus n}\]
by
\[\phi_{\zeta}(j)(l)(g)=\zeta(\sigma_{i}(g)^{-1}(j))(l).\]
Note that for any $\delta'>0$ and finite $F'\subseteq\Gamma$ we have
\[\phi_{\zeta}\in \Map(\rho,F',\delta',\sigma_{i})\]
for all large $i.$ Indeed for any $h\in \Gamma$
\begin{align*}
\rho(h\phi_{\zeta}(\cdot),\phi_{\zeta}\circ \sigma_{i}(h))^{2}&=\frac{1}{d_{i}}\sum_{l=1}^{n}\sum_{j=1}^{d_{i}}|\zeta(\sigma_{i}(h^{-1})^{-1}(j))(l)-\zeta(\sigma_{i}(e)^{-1}\sigma_{i}(h)(j))(l)|^{2}\\
&\leq nu_{d_{i}}(\{j:\sigma_{i}(h^{-1})^{-1}(j)\ne\sigma_{i}(e)^{-1}\sigma_{i}(h)(j)\})\\
&\to 0,
\end{align*}
the last line following as $(\sigma_{i})_{i}$ is a sofic approximation.
Given $D'\subseteq \ZZ(\Gamma)^{\oplus m}f,$ and $\xi\in\Xi_{\delta}(\sigma_{i}(f))$ by the proof of  Proposition 3.6 in \cite{Me5}, we have that
\[\max_{b\in D}\frac{1}{d_{i}}\sum_{j=1}^{d_{i}}|\ip{\phi_{\xi+(\ZZ^{d_{i}})^{\oplus n}}(j),b}|^{2}<\kappa(\delta)\]
with
\[\lim_{\delta\to 0}\kappa(\delta)=0.\]
Thus $\phi_{\xi+(\ZZ^{d_{i}})^{\oplus n}}\in \Map(\rho|D',F',\delta',\sigma_{i})$ if $\delta$ is sufficiently small and $i$ is sufficiently large. From this it is not hard to argue as in (a) that
\[\IE^{\Lambda,k}_{(\sigma_{i})}(q,f)\subseteq \IE^{\Lambda,k}_{(\sigma_{i})_{i}}(q,\ZZ(\Gamma)^{\oplus m}f\subseteq \ZZ(\Gamma)^{\oplus n}).\]

	Conversely, suppose we have a finite $F'\subseteq\Gamma,$ a $\delta'>0,$ and a finite $D'\subseteq\Gamma.$ Given $\phi\in\Map(\rho|D',F',\delta',\sigma_{i})$ we may define
\[\zeta_{\phi}\in(\TT^{d_{i}})^{\oplus n}\]
by
\[\zeta_{\phi}(l)(j)=\phi(j)(l)(e).\]
Let $\xi_{\phi}\in (\RR^{d_{i}})^{\oplus n}$ be any element such that
\[\xi_{\phi}+(\ZZ^{d_{i}})^{\oplus n}=\zeta_{\phi}.\]
Then by the proof of Proposition 3.6 in \cite{Me5},
\[\xi_{\phi}\in\Xi_{\kappa(D',F',\delta')}(\sigma_{i}(f))\]
with
\[\lim_{(D',F',\delta')}\kappa(D',F',\delta')=0.\]
Here the triples $(D',F',\delta')$ are ordered as in part (a). Again we can use this to argue as in (a) that
\[\IE^{\Lambda,k}_{(\sigma_{i})_{i}}(q,\ZZ(\Gamma)^{\oplus m}f\subseteq\ZZ(\Gamma)^{\oplus n})\subseteq \IE^{\Lambda,k}_{(\sigma_{i})_{i}}(q,f).\]

\end{proof}

We will use the above Lemma to show that
\[\IE^{\Lambda,k}_{(\sigma_{i})_{i}}(2,X_{f},\Gamma)=X_{f}^{k},\]
when $f\in M_{n}(\ZZ(\Gamma))\cap GL_{n}(L(\Gamma)).$ By Proposition \ref{P:regularindependence}, this will prove that every $k$-tuples of points in $X_{f}$ is a $(\sigma_{i})_{i}$-IE-tuple. We first need a way of constructing  elements of $\Lambda_{(\sigma_{i})}$ whose translates by a given finite subset of $\Gamma$ are disjoint. For this we use the following Lemma.

\begin{lemma}\label{L:Bernoulliembedding} Let $\Gamma$ be a countable discrete sofic group with sofic approximation $\sigma_{i}\colon\Gamma\to S_{d_{i}}.$ Fix a finite symmetric subset $E\subset\Gamma$ containing the identity. Then, there is a sequence $(J_{i})_{i\geq 1}\in \Lambda_{(\sigma_{i})_{i}}$ so that
\[\sigma_{i}(x)J_{i}\cap J_{i}=\varnothing\mbox{ for all $x\in E\setminus\{e\}$}\]
\[\lim_{i\to \infty}u_{d_{i}}(J_{i})=\left(\frac{1}{|E|}\right)^{|E|}.\]
\end{lemma}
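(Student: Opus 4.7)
The plan is to construct $(J_i)_{i \geq 1}$ by a random labeling argument in the spirit of Bowen's computation for Bernoulli shifts \cite{Bow}. For each $i$, pick $\omega_i \colon \{1,\dots,d_i\} \to E$ uniformly at random under the product measure, and set
\[
J_i = \bigl\{ j \in \{1,\ldots,d_i\} : \omega_i(\sigma_i(x)(j)) = x \text{ for all } x \in E \bigr\}.
\]
Assuming without loss of generality that $\sigma_i(e) = \id$, the required disjointness $\sigma_i(y)J_i \cap J_i = \varnothing$ for $y \in E \setminus \{e\}$ holds deterministically: if $j$ and $\sigma_i(y)(j)$ both lay in $J_i$, then the $x=y$ clause for $j$ forces $\omega_i(\sigma_i(y)(j)) = y$, while the $x=e$ clause for $\sigma_i(y)(j)$ forces $\omega_i(\sigma_i(y)(j)) = e$, a contradiction.

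For the density, one checks that $\EE[\chi_{J_i}(j)] = |E|^{-|E|}$ whenever the $|E|$ points $\{\sigma_i(x)(j)\}_{x \in E}$ are pairwise distinct, which by soficity holds for all but $o(d_i)$ choices of $j$. A variance estimate, using that two ``footprints'' $\{\sigma_i(x)(j)\}_{x}$ and $\{\sigma_i(x)(k)\}_{x}$ are generically disjoint outside $O(d_i)$ pairs $(j,k)$ (making the events $\{j \in J_i\}$ and $\{k \in J_i\}$ independent there), yields $\Var(u_{d_i}(J_i)) = o(1)$, and Chebyshev's inequality gives $u_{d_i}(J_i) \to |E|^{-|E|}$ in probability.

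The main step is to show $(J_i)_i \in \Lambda_{(\sigma_i)_i}$ with probability tending to $1$. The key observation is that $\chi_{J_i}$ is the pullback of the indicator $F(\phi) = \chi_{\{\phi|_E = \id_E\}}$ via the map $\Omega_i \colon \{1,\dots,d_i\} \to E^{\Gamma}$ given by $\Omega_i(j)(g) = \omega_i(\sigma_i(g)(j))$; for random $\omega_i$, this realizes an asymptotic sofic factor map onto the Bernoulli shift $\Gamma \actson (E^{\Gamma}, u_{E}^{\otimes \Gamma})$. Expanding $\EE[\|\sigma_i(f)\chi_{J_i}^{o}\|_{2}^{2}]$ as a sum of matrix coefficients and using the independence of $\omega_i$-values together with soficity yields, with $c = |E|^{-|E|}$,
\[
\EE\bigl[\|\sigma_i(f)(\chi_{J_i} - u_{d_i}(J_i)1)\|_{2}^{2}\bigr] \;=\; c\,\widehat{f^{*}f}(e) - c^{2} \sum_{k \in EE} \widehat{f^{*}f}(k) + o(1),
\]
and one verifies that this agrees with $\|\pi_{\mathrm{Bern}}(f)(F - c)\|_{L^{2}(E^{\Gamma})}^{2}$, where $\pi_{\mathrm{Bern}}$ is the Koopman representation of the Bernoulli shift. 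Invoking the standard decomposition of the mean-zero part of $\pi_{\mathrm{Bern}}$ as $\bigoplus_{\varnothing \neq S \subseteq \Gamma,\, |S|<\infty} L^{2}_{0}(E)^{\otimes S}$---each summand being induced from a finite stabilizer and hence contained in $\lambda$---we get $\|\pi_{\mathrm{Bern}}(f)(F - c)\|^{2} \leq \|\lambda(f)\|^{2} c(1-c)$. Combined with a variance bound for $\|\sigma_i(f)\chi_{J_i}^{o}\|_{2}^{2}$ and Chebyshev's inequality, this gives the weak-containment inequality for $J_i$ with probability $\to 1$. Finally, enumerating $F_{n} \nearrow \CC(\Gamma)$ and $\delta_{n} \searrow 0$, a union bound and diagonal extraction produce a deterministic sequence of labelings $\omega_i^{\ast}$ whose associated $(J_i)_i$ satisfies every requirement simultaneously. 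The principal obstacle is this last step: the matrix-coefficient expansion is explicit, but its clean conceptual justification rests on the weak containment of Bernoulli-shift Koopman representations in $\lambda$---standard for any countable discrete group, but which must be cited or reproved in the finitary sofic framework used here.
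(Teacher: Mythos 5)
Your argument is correct and, at the conceptual level, identical to the paper's: both approaches reduce the problem to the single set $J = \{\phi\in E^{\Gamma} : \phi|_{E} = \id_{E}\}$ in the Bernoulli shift $\Gamma\actson (E,u_{E})^{\Gamma}$, realize its indicator as a limit of microstates on $\{1,\dots,d_{i}\}$, verify convergence of matrix coefficients $\ip{\sigma_{i}(g)\chi_{J_{i}}^{o},\chi_{J_{i}}^{o}}$ to their Bernoulli analogues, and then invoke the fact that the mean-zero Koopman representation of the Bernoulli shift is weakly contained in $\lambda$ to transfer the inequality $\|\sigma_{i}(f)\chi_{J_{i}}^{o}\|_{2} \lesssim \|\lambda(f)\|\,\|\chi_{J_{i}}^{o}\|_{2}$. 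The differences are presentational rather than structural. Where the paper cites Bowen's entropy theorem to produce a good microstate partition $\{A_{i,g}\}_{g\in E}$ and then takes $J_{i}'=\bigcap_{g}\sigma_{i}(g)^{-1}A_{i,g}$, you re-derive that existence from scratch by the random labeling $\omega_{i}$ and second-moment concentration — but that random labeling is exactly the engine behind Bowen's theorem, so you are just unpacking the paper's citation. Your extra step (union bound plus diagonal extraction over a countable exhaustion of $(\CC(\Gamma),\delta)$-pairs to replace ``with probability $\to 1$'' by a deterministic choice of $\omega_{i}^{\ast}$) is correctly identified and is the one bookkeeping matter the paper's citation-based route avoids; the variance control you invoke for $\|\sigma_{i}(f)\chi_{J_{i}}^{o}\|_{2}^{2}$ reduces, via $\|\sigma_{i}(f)\chi_{J_{i}}^{o}\|_{2}^{2}=\sum_{g}\widehat{f^{*}f}(g)\ip{\sigma_{i}(g)\chi_{J_{i}}^{o},\chi_{J_{i}}^{o}}+o(1)$, to concentration of finitely many matrix coefficients and is routine. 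Finally, you justify weak containment via the tensor-product decomposition $L^{2}_{0}(E^{\Gamma})\cong\bigoplus_{\varnothing\ne S\subseteq\Gamma,\,|S|<\infty}L^{2}_{0}(E)^{\otimes S}$ and the finiteness of $\mathrm{Stab}(S)$; the paper instead states directly that $L^{2}(E^{\Gamma})\ominus\CC 1$ embeds $\Gamma$-equivariantly into $\ell^{2}(\NN\times\Gamma)$. These are equivalent; the ``obstacle'' you flag at the end is thus no obstacle — the weak-containment bound $\|\pi_{\mathrm{Bern}}(f)(F-c)\|\le\|\lambda(f)\|\,\|F-c\|$ is a fixed inequality in $L^{2}(E^{\Gamma})$, and once the microstate matrix coefficients converge to the Bernoulli ones, it transfers in the $i\to\infty$ limit with no need to ``reprove it in the finitary framework.''
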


\begin{proof}

Consider the Bernoulli shift action $\Gamma\acston(E,u_{E})^{\Gamma}.$ Let
\[J=\{x\in E^{\Gamma}:x(g)=g\mbox{ for all $g\in E$}\}.\]
Suppose
\[x\in J\]
and $g\in E\setminus\{e\},$ then
\[(g^{-1}x)(e)=x(g)=g\ne e\]
so $x\notin gJ\cap J.$ Thus $gJ,J$ are disjoint for all $g\in E\setminus\{e\}.$ We now use the fact that Bernoulli shifts have positive sofic entropy to model this behavior on $\{1,\dots,d_{i}\}.$

First note that for every $\varepsilon>0,$ there is a $\delta>0$ so that if $E_{i}\subseteq \{1,\dots,d_{i}\}$ has
\[u_{d_{i}}(\sigma_{i}(x)E_{i}\cap E_{i})\leq \delta,\mbox{ for all $x\in E\setminus\{e\}$}\]
then there is a $E_{i}'\subseteq E_{i}$ with
\[u_{d_{i}}(E_{i}\setminus E_{i}')\leq \varepsilon\]
and
\[\sigma_{i}(x)E_{i}'\cap E_{i}'=\varnothing,\mbox{ $x\in E\setminus\{e\}$}.\]
Indeed, this is simply proved by setting
\[E_{i}'=\bigcap_{x\in E\setminus\{e\}}E_{i}\setminus \sigma_{i}(x)^{-1}(E_{i}).\]

Using the fact that $h_{(\sigma_{i}),u_{E}^{\otimes\Gamma}}(E^{\Gamma},\Gamma)\geq 0,$ we may find a sequence $A_{i,g}\subseteq\{1,\dots,d_{i}\},g\in E$ so that
\begin{equation}\label{E:asymptoticmodelingbehavior}
u_{d_{i}}(\sigma_{i}(g_{1})^{\alpha_{1}}A_{i,h_{1}}\cap \dots \cap \sigma_{i}(g_{k})^{\alpha_{k}}A_{i,h_{k}})\to u_{E}^{\otimes \Gamma}\left(\bigcap_{l=1}^{k}\{x\in E^{\Gamma}:x(g_{l}^{-\alpha_{l}})=h_{k}\}\right)
\end{equation}
for all $k\in \NN,h_{1},\dots,h_{k}\in E,g_{1},\dots,g_{k}\in\Gamma$ and $\alpha_{1},\dots,\alpha_{k}\in \{1,-1\}$ (see e.g. Bowen's original definition of sofic entropy in \cite{Bow}). Set
\[J_{i}'=\bigcap_{g\in E}\sigma_{i}(g)^{-1}A_{i,g},\]
then by (\ref{E:asymptoticmodelingbehavior}),
\[u_{d_{i}}(\sigma_{i}(g)J_{i}'\cap J_{i}')\to 0\]
for all $g\in E\setminus\{e\}$ and
\[u_{d_{i}}(\sigma_{i}(x)J_{i}'\cap J_{i}')\to u_{E}^{\otimes \Gamma}(xJ\cap J)\]
for all $x\in\Gamma.$ Applying our previous observation we find $J_{i}\subseteq J_{i}'$ so that
\[u_{d_{i}}(J_{i}'\setminus J_{i})\to 0\]
and
\[\sigma_{i}(g)J_{i}\cap J_{i}=\varnothing\mbox{ for $g\in E\setminus\{e\}$}.\]
Since $u_{d_{i}}(J_{i}'\setminus J_{i})\to 0,$ we have
\begin{equation}\label{E:matrixcoeff}
u_{d_{i}}(\sigma_{i}(g)J_{i}\cap J_{i})-u_{d_{i}}(J_{i})^{2}\to u_{E}^{\otimes \Gamma}(gJ\cap J)-u_{E}^{\otimes \Gamma}(J)^{2}
\end{equation}
for all $g\in\Gamma.$ It is well-known that $\Gamma\actson (L^{2}((E,u_{E})^{\Gamma})\ominus \CC1)$ can be equivariantly, isometrically embedded in $\Gamma\actson \ell^{2}(\NN\times\Gamma)$ where the action of $\Gamma\actson \ell^{2}(\NN\times\Gamma)$ is given by
\[(g\xi)(n,h)=\xi(n,g^{-1}h).\]
 We will use this to show that $(J_{i})_{i\geq 1}\in \Lambda_{(\sigma_{i})_{i}}.$  We again use $o(1)$ for any expression which goes to zero as $i\to\infty.$ Let $\alpha\colon\Gamma\to \mathcal{U}(L^{2}((E,u_{E})^{\Gamma})\ominus \CC1)$ be the representation 
 \[(\alpha(g)\xi)(\omega)=\xi(g^{-1}\omega), \mbox{ $\omega\in E^{\Gamma},g\in\Gamma$}.\]
 Extend by linearity to a $*$-representation 
 \[\alpha\colon\CC(\Gamma)\to B(L^{2}(E,u_{E})^{\Gamma}\ominus \CC1).\]
 Then for any $f\in \CC(\Gamma)$ and $i\in \NN$ we have
 \begin{align}\label{E:sec4alskj}
 \|\sigma_{i}(f)(\chi_{J_{i}}-u_{d_{i}}(J_{i})1)\|_{2}^{2}&=\sum_{g,h\in\Gamma}\widehat{f}(g)\overline{\widehat{f}(h)}\ip{\sigma_{i}(g)(\chi_{J_{i}}-u_{d_{i}}(J_{i})1),\sigma_{i}(h)(\chi_{J_{i}}-u_{d_{i}}(J_{i})1)}\\ \nonumber
 &=o(1)+\sum_{g,h\in\Gamma}\widehat{f}(g)\overline{\widehat{f}(h)}\ip{\sigma_{i}(h^{-1}g)(\chi_{J_{i}}-u_{d_{i}}(J_{i})1),\chi_{J_{i}}-u_{d_{i}}(J_{i})1}\\ \nonumber
&=o(1)+ \sum_{g\in\Gamma}\widehat{f^{*}f}(g)\ip{\sigma_{i}(g)(\chi_{J_{i}}-u_{d_{i}}(J_{i})1),\chi_{J_{i}}-u_{d_{i}}(J_{i})1}\\ \nonumber
 &=o(1)+\sum_{g\in\Gamma}\widehat{f^{*}f}(g)(u_{d_{i}}(\sigma_{i}(g)J_{i}\cap J_{i})-u_{d_{i}}(J_{i})^{2})\\ \nonumber
 &=o(1)+\sum_{g\in\Gamma}\widehat{f^{*}f}(g)(u_{E}^{\otimes \Gamma}(gJ\cap J)-u_{E}^{\otimes \Gamma}(J)^{2})\\ \nonumber
 &=o(1)+\sum_{g\in\Gamma}\widehat{f^{*}f}(g)\ip{\alpha(g)(\chi_{J}-u_{E}^{\otimes \Gamma}(J)1),\chi_{E}-u_{E}^{\otimes \Gamma}(J)1}\\ \nonumber
 &=o(1)+\|\alpha(f)(\chi_{J}-u_{E}^{\otimes \Gamma}(J)1)\|_{2}^{2}.
 \end{align}
 Since $\alpha$ can be embedded into the infinite direct sum of the left regular representation, we have that 
 \begin{align}\label{E:sec4alsf}
\|\alpha(f)(\chi_{J}-u_{E}^{\otimes \Gamma}(J)1)\|_{2}&\leq \|\lambda(f)\|\|\chi_{J}-u_{E}^{\otimes \Gamma}(J)1\|_{2}\\ \nonumber
 &= \|\lambda(f)\|(u_{E}^{\otimes \Gamma}(J)-u_{E}^{\otimes\Gamma}(J)^{2})^{1/2}\\ \nonumber
 &=o(1)+\|\lambda(f)\|(u_{d_{i}}(J_{i})-u_{d_{i}}(J_{i})^{2})^{1/2}\\ \nonumber
 &=o(1)+\|\lambda(f)\|\|\chi_{J_{i}}-u_{d_{i}}(J_{i})1\|_{2}.
 \end{align}
 By $(\ref{E:sec4alskj}),(\ref{E:sec4alsf})$ we have that $(J_{i})_{i}\in \Lambda_{(\sigma_{i})_{i}}.$
 From our construction it also follows that
\[u_{d_{i}}(J_{i})\to u_{E}^{\otimes \Gamma}(J)=\left(\frac{1}{|E|}\right)^{|E|}.\]

\end{proof}

For the next Lemma, we need some notation. We use $\TT=\RR/\ZZ.$  Define $t\colon\CC(\Gamma)\to\CC$ by
\[t(\alpha)=\sum_{g\in\Gamma}\widehat{\alpha}(g).\]
\begin{lemma} Let $\Gamma$ be a countable discrete group, and $f\in M_{n}(\ZZ(\Gamma))\cap GL_{n}(L(\Gamma)).$ Let $\phi$ be the inverse of $f$ in $M_{n}(L(\Gamma)).$  Define $Q\colon \ell^{2}_{\RR}(\Gamma)^{\oplus n}\to (\TT^{\Gamma})^{\oplus n}$ by
\[Q(\xi)(l)(g)=\xi(l)(g)+\ZZ.\]
Then $Q(\{\alpha\phi^{*}:\alpha\in\ZZ(\Gamma)^{\oplus n},t(\alpha(j))=0,1\leq j\leq n\})$ is dense in $X_{f}.$

\end{lemma}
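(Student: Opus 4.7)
The plan is to use Pontryagin duality. Set $M := \ZZ(\Gamma)^{\oplus n}/r(f)\ZZ(\Gamma)^{\oplus n}$, so $X_f = \widehat{M}$ with natural pairing
\[\langle b,\theta\rangle := \sum_{l=1}^{n}\sum_{g\in\Gamma}\widehat{b(l)}(g)\,\theta(l)(g) \bmod \ZZ \qquad (b \in \ZZ(\Gamma)^{\oplus n},\ \theta \in (\TT^\Gamma)^{\oplus n}).\]
Letting $S$ denote the subgroup in the statement, I will verify (i) $S \subseteq X_f$, and (ii) the annihilator of $S$ in $M$ is trivial; Pontryagin duality then forces $S$ to be dense in $X_f$.

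Both steps will rest on one trace-duality identity. First I will observe that $\phi = f^{-1}$ has real Fourier coefficients, since $f \in M_n(\ZZ(\Gamma))$ is real and complex conjugation commutes with inversion in $M_n(L(\Gamma))$. Writing $T\colon L(\Gamma)\to L(\Gamma)$ for the anti-involution $T(\sum_g \widehat{x}(g)g) := \sum_g \widehat{x}(g^{-1})g$, realness of $\phi$ forces $(\phi^*)_{ml} = T(\phi_{lm})$. Combining $\sum_g \widehat{x}(g)\widehat{y}(g) = \tau(xT(y))$ with $T$ applied to $(\alpha\phi^*)(l) = \sum_m \alpha(m)T(\phi_{lm})$ and cycling the trace, I will derive that for all $b \in \ZZ(\Gamma)^{\oplus n}$ and all $\alpha \in \CC(\Gamma)^{\oplus n}$,
\[\sum_l\sum_g \widehat{b(l)}(g)\,\widehat{(\alpha\phi^*)(l)}(g) \;=\; \sum_m\sum_g \widehat{(b\phi)(m)}(g)\,\widehat{\alpha(m)}(g),\]
where $(b\phi)(m) := \sum_l b(l)\phi_{lm} \in \ell^2(\Gamma)$. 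Claim (i) will then follow: substituting $b \leftarrow r(f)b' = b'f$ collapses the right-hand side via $b'f\cdot\phi = b'$, giving $\sum_m\sum_g \widehat{b'(m)}(g)\widehat{\alpha(m)}(g) \in \ZZ$ since both $b'(m)$ and $\alpha(m)$ are integer-valued finitely supported.

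For claim (ii) assume $\Gamma$ is infinite. Supposing $b$ pairs integrally with every admissible $\alpha\phi^*$, I will set $c := b\phi \in \ell^2(\Gamma)^{\oplus n}$ and use the identity to rephrase the hypothesis as $\sum_m\sum_g \widehat{c(m)}(g)\widehat{\alpha(m)}(g) \in \ZZ$ for all $\alpha \in \ZZ(\Gamma)^{\oplus n}$ with $t(\alpha(j)) = 0$. Specializing $\alpha$ to be supported on a single coordinate $m_0$ with $\alpha(m_0) = g_0 - e$ (always in the augmentation kernel) yields $\widehat{c(m_0)}(g_0) - \widehat{c(m_0)}(e) \in \ZZ$ for every $g_0 \in \Gamma$. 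Since $c(m_0) \in \ell^2(\Gamma)$ and $\Gamma$ is infinite, $\widehat{c(m_0)}(g) \to 0$ along $\Gamma$, so the constant $\widehat{c(m_0)}(e)$ is a limit of integers and lies in $\ZZ$; all other Fourier coefficients of $c(m_0)$ are then integers, and the $\ell^2$ condition forces $c(m_0) \in \ZZ(\Gamma)$. Hence $c \in \ZZ(\Gamma)^{\oplus n}$, and $b = cf = r(f)c \in r(f)\ZZ(\Gamma)^{\oplus n}$, so $b$ is trivial in $M$.

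The hard part will be the trace-duality identity: carefully threading the matrix adjoint, the anti-involution $T$, realness of $\phi$, and trace-cyclicity so that $\phi^*$ on the left morphs into $\phi$ acting on the right via $\phi f = I$. Once it is in hand, the remainder of the proof is the short $\ell^2$-decay argument on Fourier coefficients above.
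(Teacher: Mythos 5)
Your argument is correct and follows essentially the same route as the paper's: reduce via Pontryagin duality to showing that the annihilator of $\{\alpha\phi^{*}\}$ in $\ZZ(\Gamma)^{\oplus n}$ is exactly $\ZZ(\Gamma)^{\oplus n}f$, transport $\phi^{*}$ across the pairing (the paper writes this as $\ip{\beta,\alpha\phi^{*}}=\ip{\beta\phi,\alpha}$, you derive the same identity by threading the anti-involution $T$), specialize to $\alpha=(g-e)\otimes e_{l}$, and use $\ell^{2}$-decay of $\widehat{(b\phi)(l)}$ to conclude that $\widehat{(b\phi)(l)}(e)\in\ZZ$ and hence $b\phi\in\ZZ(\Gamma)^{\oplus n}$. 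You are slightly more careful than the paper in explicitly verifying $S\subseteq X_{f}$, in noting why $\phi$ has real Fourier coefficients (so that $\phi^{*}=T(\phi)$ and $\alpha\phi^{*}$ is indeed in $\ell^{2}_{\RR}(\Gamma)^{\oplus n}$), and in flagging that the decay step requires $\Gamma$ to be infinite.
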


\begin{proof}
As usual, we view $\ZZ(\Gamma)^{\oplus n}\subseteq \ell^{2}(\Gamma)^{\oplus n}.$ For $\alpha,\beta\in\ZZ(\Gamma)^{\oplus n}$ we have 
\[\ip{\alpha,\beta}=\sum_{l=1}^{n}\tau(\beta(l)^{*}\alpha(l))\]
where $\tau$ is the trace on $L(\Gamma).$  For $\theta\in(\TT^{\Gamma})^{\oplus n},\alpha\in\ZZ(\Gamma)^{\oplus n}$ we set
\[\ip{\theta,\alpha}_{\TT}=\sum_{l=1}^{n}\sum_{g\in\Gamma}\theta(l)(g)\widehat{\alpha(l)}(g)\in\TT.\]
Then the pairing $\ip{\cdot,\cdot}_{\TT}$ allows us to identify $(\TT^{\Gamma})^{\oplus n}\cong (\ZZ(\Gamma)^{\oplus n})^{\widehat{}}.$

By Pontryagin duality, it suffices to show that if $\beta\in\ZZ(\Gamma)^{\oplus n}$ has
\[\ip{\beta,\alpha\phi^{*}}\in\ZZ\]
for all $\alpha\in\ZZ(\Gamma)^{\oplus n}$ with $t(\alpha(l))=0,1\leq l\leq n,$ then $\beta\in \ZZ(\Gamma)^{\oplus n}f.$ For $x\in L(\Gamma),$ and $1\leq l\leq n$ we use $x\otimes e_{l}\in L(\Gamma)^{\oplus n}$ which is $x$ in the $l^{th}$ coordinate and $0$ in every other coordinate. Fix $1\leq l\leq n$ and consider $\alpha=(g-1)\otimes e_{l}.$ Then
\[\ip{\beta,\alpha\phi^{*}}=\ip{\beta\phi,\alpha}=\widehat{(\beta\phi)(l)}(g)-\widehat{(\beta\phi)(l)}(e).\]
So
\[\widehat{(\beta\phi)(l)}(g)-\widehat{(\beta\phi)(l)}(e)\in\ZZ\]
for all $g\in\Gamma.$ Letting $g\to\infty,$ and using that $\widehat{(\beta\phi)(l)}\in\ell^{2}$ we find that
\[\widehat{(\beta\phi)(l)}(e)\in\ZZ.\]
As
\[\widehat{(\beta\phi)(l)}(g)-\widehat{(\beta\phi)(l)}(e)\in\ZZ\]
for all $g\in\Gamma,1\leq l\leq n$ we find that $\beta\phi\in\ZZ(\Gamma)^{\oplus n}.$ Thus
\[\beta=(\beta\phi)f\in\ZZ(\Gamma)^{\oplus n}f.\]

\end{proof}

We are now ready to prove our Theorem, but we first recall the notation we introduced at the beginning of this section. From the identifications 
\[M_{m,n}(\CC(\Gamma))\subseteq M_{m,n}(L(\Gamma))\subseteq B(\ell^{2}(\Gamma)^{\oplus n},\ell^{2}(\Gamma)^{\oplus m})\]
we may think of elements of $M_{m,n}(\CC(\Gamma))$ as bounded, linear operators $\ell^{2}(\Gamma)^{\oplus n}\to \ell^{2}(\Gamma)^{\oplus m}.$
For a fixed $x\in M_{m,n}(\CC(\Gamma))$ we let $\|x\|_{\infty}$ be the norm of $x$ as an operator  $\ell^{2}(\Gamma)^{\oplus n}\to \ell^{2}(\Gamma)^{\oplus m}$
under the above identification. We also identify $\CC(\Gamma)^{\oplus n}\cong M_{1,n}(\CC(\Gamma))$ and use the notation above. We thus caution the reader that for $A\in M_{m,n}(\CC(\Gamma))$ 
\[\|A\|_{\infty}\ne \sup_{\substack{g\in\Gamma,\\ 1\leq i\leq m,1\leq j\leq n}}|\widehat{A_{ij}}(g)|,\]
with similar remarks for elements of $\CC(\Gamma)^{\oplus n}.$ 

\begin{theorem}\label{T:tuplesmanf} Let $\Gamma$ be a countable discrete group with sofic approximation $\sigma_{i}\colon \Gamma\to S_{d_{i}}$ be a sofic approximation. Let $f\in M_{n}(\ZZ(\Gamma))\cap GL_{n}(L(\Gamma)),$ then every $k$-tuple of points in $X_{f}$ is a $\ell^{2}-\Lambda_{(\sigma_{i})_{i}}-\IE-k$-tuple.

\end{theorem}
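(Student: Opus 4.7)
By Proposition \ref{P:IEreduction}(b) it suffices to show every $k$-tuple in $X_{f}$ is a $\ell^{2}$-$\Lambda_{(\sigma_{i})_{i}}$-$\IE$-$k$-tuple for $f$. The preceding lemma, combined with the closedness of $\IE^{\Lambda,k}_{(\sigma_{i})_{i}}(2,f)$ (which follows routinely, e.g.\ via Proposition \ref{P:permanence}(2) transferred through Proposition \ref{P:IEreduction}(b)), reduces us to tuples of the form $x_{j}=Q(\alpha_{j}\phi^{*})$ with $\alpha_{j}\in\ZZ(\Gamma)^{\oplus n}$ and $t(\alpha_{j}(l))=0$ for all $l$, where $\phi=f^{-1}\in M_{n}(L(\Gamma))$; note that $\phi$ has real Fourier coefficients because $f$ does.

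\textbf{Key tool.} The decisive observation is that $\phi$ actually lies in $M_{n}(C^{*}_{\lambda}(\Gamma))$, and therefore admits operator-norm approximants in $M_{n}(\CC(\Gamma))$. Indeed, decompose $\phi=(f^{*}f)^{-1}f^{*}$; since $f^{*}f\in M_{n}(C^{*}_{\lambda}(\Gamma))$ is self-adjoint, spectral permanence gives $\spec_{C^{*}_{\lambda}(\Gamma)}(f^{*}f)=\spec_{L(\Gamma)}(f^{*}f)$, which is bounded away from zero by invertibility of $f$ in $L(\Gamma)$. Continuous functional calculus then places $(f^{*}f)^{-1}$, and hence $\phi$, in $M_{n}(C^{*}_{\lambda}(\Gamma))$.

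\textbf{Construction.} Fix $\varepsilon>0$ and a finite $K\subseteq\Gamma$. Choose once and for all a real-coefficient $\psi_{0}\in M_{n}(\CC(\Gamma))$ with $\|\psi_{0}-\phi\|_{\infty}$ small enough to control the matching step, and let $E_{0}\subseteq\Gamma$ be a finite symmetric set containing $\{e\}\cup K$, the supports of the $\psi_{0,lm}$ and $\alpha_{j}(l)$, and all products needed for the disjointness computation below. Lemma \ref{L:Bernoulliembedding} supplies $(J_{i})_{i}\in\Lambda_{(\sigma_{i})_{i}}$ with $\sigma_{i}(g)J_{i}\cap J_{i}=\varnothing$ for $g\in E_{0}\setminus\{e\}$ and $\liminf_{i}u_{d_{i}}(J_{i})\geq (1/|E_{0}|)^{|E_{0}|}=:d_{0}>0$; crucially, $d_{0}$ depends only on $\varepsilon$. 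For each $\delta>0$ pick a finer real-coefficient $\psi_{\delta}\in M_{n}(\CC(\Gamma))$ with $\|\psi_{\delta}-\phi\|_{\infty}\leq\|\psi_{0}-\phi\|_{\infty}$ and also small enough (now depending on $\delta$) for the $\Xi_{\delta}$ bound below, and choose $E_{\delta}\subseteq\CC(\Gamma),\eta_{\delta}>0$ suitably. For any $c\colon J_{i}\to\{1,\dots,k\}$ with $c^{-1}(\{l\})\in\Lambda(E_{\delta},\eta_{\delta},\sigma_{i})$, define
\[
\eta_{c}(m)=\sum_{j\in J_{i}}\sigma_{i}(\alpha_{c(j)}(m)^{*})\delta_{j}\in(\ZZ^{d_{i}})^{\oplus n},\qquad \xi=\sigma_{i}(\psi_{\delta})\eta_{c}\in(\RR^{d_{i}})^{\oplus n}.
\]

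\textbf{Verification and main obstacle.} For $\xi\in\Xi_{\delta}(\sigma_{i}(f))$ one uses the sofic operator-norm bound $\|\sigma_{i}(f\psi_{\delta}-1)\|_{\infty}\leq\|f\|_{\infty}\|\psi_{\delta}-\phi\|_{\infty}+o(1)$: applied to the bounded-norm vector $\eta_{c}$, this gives $\|\sigma_{i}(f)\xi-\eta_{c}\|_{2}<\delta$ for $i$ large, hence $\|\sigma_{i}(f)\xi\|_{2,(\ZZ^{d_{i}})^{\oplus n}}<\delta$ since $\eta_{c}$ is integer-valued. For the matching condition, split $\xi=\xi_{0}+(\xi-\xi_{0})$ with $\xi_{0}=\sigma_{i}(\psi_{0})\eta_{c}$: disjointness of $E_{0}$-translates of $J_{i}$ together with sofic approximation gives $\xi_{0}(l)(\sigma_{i}(g)^{-1}(j))\approx(\alpha_{c(j)}\psi_{0}^{*})(l)(g)$ for $g\in K,j\in J_{i}$, whose averaged squared deviation from $x_{c(j)}(l)(g)=(\alpha_{c(j)}\phi^{*})(l)(g)\bmod\ZZ$ is at most $\max_{j}\|\alpha_{j}\|_{\infty}^{2}\|\psi_{0}-\phi\|_{\infty}^{2}$; the correction $\xi-\xi_{0}=\sigma_{i}(\psi_{\delta}-\psi_{0})\eta_{c}$ contributes at most $d_{0}^{-1}\|\psi_{\delta}-\psi_{0}\|_{\infty}^{2}\|\eta_{c}\|_{2}^{2}+o(1)$ to the average. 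Both bounds are made $<\varepsilon^{2}/2$ by our choice of $\psi_{0},\psi_{\delta}$, and since $d_{0}$ is independent of $\delta$ we get $I^{f}_{\Lambda,2}(x,(\sigma_{i})_{i};\varepsilon,K)\geq d_{0}>0$, completing the proof. The principal obstacle — and the reason this genuinely extends the $\ell^{1}$-case — is that a naïve $\ell^{2}$-truncation of $\phi$ would only yield 2-norm approximants; then the support of $\psi_{\delta}$ would have to be enlarged as $\delta\to 0$, forcing $E_{0}$ to grow, and the Bernoulli-type density $(1/|E_{0}|)^{|E_{0}|}$ would shrink to zero in the infimum. The spectral-permanence argument that puts $\phi$ in $C^{*}_{\lambda}(\Gamma)$ is precisely what decouples the density of $(J_{i})_{i}$ from the parameter $\delta$.
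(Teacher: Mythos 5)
Your setup — reduction via Proposition \ref{P:IEreduction}(b) to the dense family $Q(\alpha\phi^{*})$ with $t(\alpha(l))=0$, approximating $\phi=f^{-1}$ by $\psi_{\delta}\in M_{n}(\RR(\Gamma))$ in operator norm, and building $(J_{i})_{i}\in\Lambda_{(\sigma_{i})_{i}}$ via Lemma \ref{L:Bernoulliembedding} with a support set that is independent of $\delta$ — tracks the paper's argument closely. The spectral-permanence remark is a genuinely useful addition: the paper writes ``let $\phi_{t}\in M_{n}(\RR(\Gamma))$ with $\|\phi_{t}-\phi\|_{\infty}<t$'' without comment, and this tacitly requires $\phi\in M_{n}(C^{*}_{\lambda}(\Gamma))$; your observation that $C^{*}$-subalgebras are inverse-closed (applied to $M_{n}(\ZZ(\Gamma))\subseteq M_{n}(C^{*}_{\lambda}(\Gamma))\subseteq M_{n}(L(\Gamma))$) supplies the missing justification, and your diagnosis that this is exactly what keeps the Bernoulli density decoupled from $\delta$ is correct.

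However, the verification step has a genuine gap, and it is the one the whole $\Lambda$-machinery was designed to fix. The asserted ``sofic operator-norm bound'' $\|\sigma_{i}(f\psi_{\delta}-1)\|_{\infty}\leq\|f\|_{\infty}\|\psi_{\delta}-\phi\|_{\infty}+o(1)$ is false for a general sofic approximation. For $g\in\CC(\Gamma)$ the operator norm of $\sigma_{i}(g)$ on $\ell^{2}(d_{i})$ is controlled only by $\|g\|_{\ell^{1}}$, not by $\|\lambda(g)\|$; the entire gap between the $\ell^{1}$ and $L(\Gamma)$ versions of Deninger's problem is exactly this discrepancy. The correct bound $\|\sigma_{i}(g)v\|_{2}\lesssim\|\lambda(g)\|\,\|v\|_{2}$ holds not for all $v$, but only for $v=\chi_{J}-u_{d_{i}}(J)1$ with $J\in\Lambda(E,\eta,\sigma_{i})$ and $E$ containing the relevant elements of $\CC(\Gamma)$. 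To apply it one must (a) use $t(\alpha_{s})=0$ to rewrite $\sigma_{i}(\beta\alpha_{s}^{*})\chi_{J_{i}^{(s)}}=\sigma_{i}(\beta\alpha_{s}^{*})(\chi_{J_{i}^{(s)}}-u_{d_{i}}(J_{i}^{(s)})1)$, since $t(\beta\alpha_{s}^{*})=t(\beta)t(\alpha_{s}^{*})=0$, and (b) invoke the hypothesis $J_{i}^{(s)}=c^{-1}(\{s\})\in\Lambda(E_{\delta},\eta_{\delta},\sigma_{i})$ with $E_{\delta}$ chosen to contain the matrix entries of $(f\psi_{\delta}-1)\alpha_{s}^{*}$ (and, for your matching estimate, of $(\psi_{\delta}-\psi_{0})\alpha_{s}^{*}$). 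Your write-up introduces the $\Lambda$-hypothesis on $c^{-1}(\{l\})$ and the condition $t(\alpha_{j}(l))=0$ in the reduction, but neither is used in the verification; instead a blanket operator-norm inequality is invoked that a sofic approximation simply does not provide. The same problem affects the ``correction'' bound $d_{0}^{-1}\|\psi_{\delta}-\psi_{0}\|_{\infty}^{2}\|\eta_{c}\|_{2}^{2}+o(1)$. As written, the argument proves nothing beyond the $\ell^{1}$ case; filling the gap requires inserting the mean-zero cancellation and the $\Lambda(E,\eta,\sigma_{i})$ estimate exactly as in the paper's equations $(\ref{E:cancellation1})$--$(\ref{E:cancellation3})$.
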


\begin{proof}

   Let $\phi$ be the inverse of $f$ in $M_{n}(L(\Gamma)).$  By the preceding lemma and Proposition \ref{P:permanence}, it suffices to prove the theorem when $(x_{1},\dots,x_{k})=(Q(\alpha_{1}\phi^{*}),\dots,Q(\alpha_{k}\phi^{*}))$ where $t(\alpha_{j})=0.$ For $t>0,$ let  $\phi_{t}\in M_{n}(\RR(\Gamma))$ be such that
\[\|\phi_{t}-\phi\|_{\infty}<t.\]

Fix $\varepsilon>0,$ and a $A\subseteq\Gamma$ finite. Suppose we are given a finite $F\subseteq\Gamma,$ and a $\delta>0.$ Let $E\subseteq\CC(\Gamma)$ be finite and $\eta>0$ to depend upon $F,\delta$ in a manner to be determined later.
Let
\[L_{1}=(\supp(\phi_{\varepsilon})\cup \{e\}\cup \supp(\phi_{\varepsilon})^{-1}),\]
\[L_{2,s}=(\supp(\alpha_{s})\cup \{e\}\cup\supp(\alpha_{s})^{-1})\mbox{ for $1\leq s\leq k,$}\]
\[K_{1}=\left[\bigcup_{1\leq s\leq k}L_{2,s}(\supp(f)\cup\{e\}\cup\supp(f)^{-1})L_{1}\right]^{(2015)!},\]
\[K_{2}=\left[\bigcup_{1\leq s\leq k}((A\cup\{e\}\cup A^{-1})(\supp(\phi_{\varepsilon})\cup\{e\}\cup\supp(\phi_{\varepsilon})^{-1})(\supp(\alpha_{s})\cup\{e\}\cup\supp(\alpha_{s})^{-1}))\right]^{(2015)!},\]
\[K=K_{1}\cup K_{2}.\]
Apply Lemma \ref{L:Bernoulliembedding} to find a sequence $J_{i}\subseteq \{1,\dots,d_{i}\}$ so that $\{\sigma_{i}(x)J_{i}\}_{x\in K}$ are a disjoint family, and $(J_{i})_{i}\in\Lambda_{(\sigma_{i})}$ and
\[\lim_{i\to \infty}\frac{|J_{i}|}{d_{i}}=\left(\frac{1}{|K|}\right)^{|K|}.\]
Note that if $J_{i}'\subseteq J_{i}$ satisfies
\[u_{d_{i}}(J_{i}\setminus J_{i}')\to 0,\]
then $J_{i}'$ enjoys the conclusions of Lemma \ref{L:Bernoulliembedding} as well. So by soficity, we may assume
\[\sigma_{i}(x)(j)\ne \sigma_{i}(y)(j)\]
for $x\ne y\in K,j\in J_{i}$ and that
\[\sigma_{i}(x_{1}\dots x_{l})(j)=\sigma_{i}(x_{1})\dots\sigma_{i}(x_{l})(j)\]
for $x_{1},\dots,x_{l}\in K$ and $1\leq l\leq (2015)!.$ Let $c\colon J_{i}\to \{1,\dots,k\}$ be such that $c^{-1}(\{s\})\in \Lambda(E,\eta,\sigma_{i}).$ Set
\[J_{i}^{(s)}=c^{-1}(\{s\}).\]
 For $t\in (0,\infty)$ let
\[\xi_{t}=\sum_{1\leq s\leq k}\sigma_{i}(\phi_{t}\alpha_{s}^{*})\chi_{J_{i}^{(s)}}.\]
Note that

\[\sigma_{i}(f)\xi_{\delta}-\sum_{1\leq s\leq k}\sigma_{i}(\alpha_{s}^{*})\chi_{J_{i}^{(s)}}=\sum_{1\leq s\leq k}(\sigma_{i}(f)\sigma_{i}(\phi_{\delta}\alpha_{s}^{*})-\sigma_{i}(\alpha_{s}^{*}))\chi_{J_{i}^{(s)}}.\]
For $\beta\in \CC(\Gamma)$ we have
\[\sigma_{i}(\beta)1=t(\beta)1.\]
Because $t(\alpha_{s})=0$ for $1\leq s\leq k,$
\begin{equation}\label{E:cancellation1}
\sigma_{i}(f)\xi_{\delta}-\sum_{1\leq s\leq k}\sigma_{i}(\alpha_{s}^{*})\chi_{J_{i}^{(s)}}=\sum_{1\leq s\leq k}(\sigma_{i}(f)\sigma_{i}(\phi_{\delta}\alpha_{s}^{*})-\sigma_{i}(\alpha_{s}^{*}))(\chi_{J_{i}^{(s)}}-u_{d_{i}}(\chi_{J_{i}^{(s)}})1).
\end{equation}
Since
\[\|\chi_{J_{i}^{(s)}}-u_{d_{i}}(J_{i}^{(s)})1\|_{\infty}\leq 2,\]
we have
\[\|\sigma_{i}(f)\sigma_{i}(\phi_{\delta}\alpha_{s}^{*})(\chi_{J_{i}^{(s)}}-u_{d_{i}}(J_{i}^{(s)})1)-\sigma_{i}(f\phi_{\delta}\alpha_{s}^{*})(\chi_{J_{i}^{(s)}}-u_{d_{i}}(J_{i}^{(s)})1)\|_{2}\to_{i\to\infty}0.\]
Thus
\begin{equation}\label{E:cancellation2}
\left\|\sum_{1\leq s\leq k}\left(\sigma_{i}(f)\sigma_{i}(\phi_{\delta}\alpha_{s}^{*})(\chi_{J_{i}^{(s)}}-u_{d_{i}}(J_{i}^{(s)})1)-\sigma_{i}(f\phi_{\delta}\alpha_{s}^{*}))(\chi_{J_{i}^{(s)}}-u_{d_{i}}(J_{i}^{(s)})1)\right)\right\|_{2}\to_{i\to\infty}0.
 \end{equation}
 We have
 \[\|\sigma_{i}(f\phi_{\delta}\alpha_{s}^{*})(\chi_{J_{i}^{(s)}}-u_{d_{i}}(J_{i}^{(s)})1)-\sigma_{i}(\alpha_{s}^{*})(\chi_{J_{i}^{(s)}}-u_{d_{i}}(J_{i}^{(s)})1)\|_{2}=\]
 \[\left(\sum_{l=1}^{n}\left\|\sum_{p=1}^{n}(\sigma_{i}((f\phi_{\delta}\alpha_{s}^{*}-\alpha_{s}^{*})_{lp})(\chi_{J_{i}^{(s)}}-u_{d_{i}}(J_{i}^{(s)})1)\right\|_{2}^{2}\right)^{1/2}.\]
If
 \[E\supseteq\{(f\phi_{\delta}\alpha_{s}^{*}-\alpha_{s}^{*})_{lp}:1\leq l,p\leq n\}\]
 then as
 \[\|f\phi_{\delta}\alpha_{s}^{*}-\alpha_{s}^{*}\|_{\infty}\leq \delta\|f\|_{\infty}\|\alpha_{s}\|_{\infty},\]
 we have
 \begin{align*}
 \|\sigma_{i}(f\phi_{\delta}\alpha_{s}^{*})(\chi_{J_{i}^{(s)}}-u_{d_{i}}(J_{i}^{(s)})1)-\sigma_{i}(\alpha_{s}^{*})(\chi_{J_{i}^{(s)}}&-u_{d_{i}}(J_{i}^{(s)})1)\|_{2}\\
 &\leq\left(\sum_{l=1}^{n}\left( \|\alpha_{s}\|_{\infty}\|f\|_{\infty}n\delta\|\chi_{J_{i}^{(s)}}-u_{d_{i}}(J_{i}^{(s)}1)\|_{2}+n\eta\right)^{2}\right)^{1/2}\\
&\leq \|\alpha_{s}\|_{\infty}\|f\|_{\infty}n^{2}\delta\|\chi_{J_{i}^{(s)}}\|_{2}+n^{2}\eta.
 \end{align*}
 Set
\[M=(\|f\|_{\infty}+1)\left(\sum_{1\leq s\leq k}\|\alpha_{s}\|_{\infty}^{2}\right)^{1/2},\]
then
 \begin{align}\label{E:cancellation3}
\left\|\sum_{1\leq s\leq k}\sigma_{i}(f\phi_{\delta}\alpha_{s}^{*})(\chi_{J_{i}^{(s)}}-u_{d_{i}}(J_{i}^{(s)})1)-\sigma_{i}(\alpha_{s}^{*})(\chi_{J_{i}^{(s)}}-u_{d_{i}}(J_{i}^{(s)})1)\right\|_{2}&\leq kn^{2}\eta\\ \nonumber
&+n^{2}\delta\|f\|_{\infty}\sum_{1\leq s\leq k}\|\alpha_{s}\|_{\infty}\|\chi_{J_{i}^{(s)}}\|_{2}\\ \nonumber
 &\leq kn^{2}\eta+Mn^{2}\delta u_{d_{i}}(J_{i})^{1/2},\nonumber
\end{align}
where in the last step we use the Cauchy-Schwartz inequality and the fact that
\[\sum_{1\leq s\leq k}\|\chi_{J_{i}^{(s)}}\|_{2}^{2}=u_{d_{i}}(J_{i}).\]
 If we force $\eta$ sufficiently small then by $(\ref{E:cancellation1}),(\ref{E:cancellation2}),(\ref{E:cancellation3})$ we have for all large $i,$
 \begin{equation}\label{E:itsmicrostate}
 \xi_{\delta}\in \Xi_{\delta(n^{2}+1)M}(\sigma_{i}(f)).
 \end{equation}
We will want to force $\eta$ to be even smaller later.

If $E\supseteq \{(\phi_{\varepsilon}\alpha_{s}^{*}-\phi_{\delta}\alpha_{s}^{*})_{pl}:1\leq l,p\leq n\},$ then for all $1\leq s\leq k,$ for all $1\leq p,l\leq n$
\[\|\sigma_{i}((\phi_{\varepsilon}\alpha_{s}^{*}-\phi_{\delta}\alpha_{s}^{*})_{pl})(\chi_{J_{i}^{(s)}}-u_{d_{i}}(J_{i}^{(s)})1)\|_{2}\leq 2\varepsilon\|\alpha_{s}\|_{\infty}\|\chi_{J_{i}^{(s)}}-u_{d_{i}}(J_{i}^{(s)})1)\|_{2}+\eta\leq 2\varepsilon\|\alpha_{s}\|_{\infty}u_{d_{i}}(J_{i}^{(s)})^{1/2}+\eta. \]
Note that in our defintion of $\ell^{2}-\Lambda_{(\sigma_{i})_{i}}$-tuples we are allowed to have $E,\eta$ depend upon $\delta.$ 
By the same arguments as before
\[\|\xi_{\varepsilon}-\xi_{\delta}\|_{2,J_{i}}\leq n^{2}\eta+2\varepsilon n^{2}\sum_{1\leq s\leq k}\|\alpha_{s}\|_{\infty}u_{d_{i}}(J_{i}^{(s)})^{1/2}\\
\leq n^{2}\eta+2\varepsilon M u_{d_{i}}(J_{i})^{1/2}\]
where again we have used the Cauchy-Schwartz inequality and the fact that
\[\sum_{1\leq s\leq k}\|\chi_{J_{i}^{(s)}}\|_{2}^{2}=u_{d_{i}}(J_{i}).\]
Thus
\[\frac{1}{|J_{i}|}\sum_{j\in J_{i}}|\xi_{\varepsilon}(j)-\xi_{\delta}(j)|^{2}=u_{d_{i}}(J_{i})^{-1}\frac{1}{d_{i}}\sum_{j\in J_{i}}|\xi_{\varepsilon}(j)-\xi_{\delta}(j)|^{2}\leq (u_{d_{i}}(J_{i})^{-1}n^{4}\eta^{2}+ 2\varepsilon Mn^{2}\eta u_{d_{i}}(J_{i})^{-1/2}+4\varepsilon^{2} M^{2}).\]
For all large $i,$
\[u_{d_{i}}(J_{i})\geq \frac{1}{2}\left(\frac{1}{|K|}\right)^{|K|}.\]
So we can choose $\eta$ sufficiently small (depending only upon $K$) so that
\[\|\xi_{\delta}-\xi_{\varepsilon}\|_{2,J_{i}}<\varepsilon(2M+1).\]
Then
\begin{equation}\label{E:closenessetc}
\left(\frac{1}{|J_{i}|}\sum_{j\in J_{i}}\left|[\sigma_{i}(g)\xi_{\delta}](j)-[\sigma_{i}(g)\xi_{\varpesilon}](j)+\ZZ\right|^{2}\right)^{1/2}\leq \|\sigma_{i}(g)\xi_{\delta}-\sigma_{i}(g)\xi_{\varepsilon}\|_{2,J_{i}}<\varepsilon(2M+1).
\end{equation}
Additionally, for $j\in J_{i}^{(s)},g\in A$
\begin{align*}
\sigma_{i}(g)\xi_{\varepsilon}(j)&=\sum_{x\in \Gamma}\sum_{1\leq s\leq k}\widehat{\phi_{\varepsilon}\alpha_{s}^{*}}(x)\chi_{\sigma_{i}(g)\sigma_{i}(x)J_{i}^{(s)}}(j)\\
&=\sum_{x\in K\cap g^{-1}K}\sum_{1\leq s\leq k}\widehat{\phi_{\varepsilon}\alpha_{s}^{*}}(x)\chi_{\sigma_{i}(g)\sigma_{i}(x)J_{i}^{(s)}}(j),
\end{align*}
here we use our choice of $J_{i}$ as well as the fact that $K\cap g^{-1}K\supseteq \supp(\phi_{\varepsilon}\alpha_{s}^{*}).$ As $\{\sigma_{i}(k)J_{i}\}_{k\in K}$ are a disjoint family, we have for $x\in K\cap g^{-1}K$ that $\chi_{\sigma_{i}(gx)J_{i}^{(s)}}(j)=1$ if and only if $gx=e,$ and thus when $x=g^{-1}.$  Since $K\cap g^{-1}K\supseteq \supp(\phi_{\varepsilon}\alpha_{s}^{*}),$ the above sum is
\[\widehat{\phi_{\varepsilon}\alpha_{s}^{*}}(g^{-1})=\widehat{\alpha_{s}\phi_{\varepsilon}^{*}}(g).\]
As
\[|\widehat{\alpha_{s}\phi_{\varepsilon}^{*}}(g)-\widehat{\alpha_{s}\phi^{*}}(g)|\leq \|\alpha_{s}\phi_{\varepsilon}^{*}-\alpha_{s}\phi^{*}\|_{2}\leq \varepsilon\|\alpha_{s}\|_{2}\leq \varepsilon\|\alpha_{s}\|_{\infty}.\]
We find that
\[\max_{g\in A}\left(\frac{1}{|J_{i}|}\sum_{j\in J_{i}}|(\sigma_{i}(g)\xi_{\varepsilon})(j)+\ZZ-\widehat{\alpha_{c(j)}\phi^{*}}(g)|^{2}\right)^{1/2}<\varepsilon M.\]
Combining with (\ref{E:closenessetc})
\[\max_{g\in A}\left(\frac{1}{|J_{i}|}\sum_{j\in J_{i}}|(\sigma_{i}(g)\xi_{\delta})(j)+\ZZ-\widehat{\alpha_{c(j)}\phi^{*}}(g)|^{2}\right)^{1/2}<\varepsilon(3M+1).\]
As $\varepsilon>0$ is arbitrary, the Theorem is now proved using Proposition \ref{P:IEreduction}.

\end{proof}

\begin{cor}\label{C:IEandregular} Let $\Gamma$ be a countable discrete sofic group with sofic approximation $\sigma_{i}\colon\Gamma\to S_{d_{i}}.$ Let $f\in M_{n}(\ZZ(\Gamma))\cap GL_{n}(L(\Gamma)).$  Then every $k$-tuple of points in $X_{f}$ is a $(\sigma_{i})_{i}-\IE-k$-tuple.
\end{cor}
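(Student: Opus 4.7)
The proof is essentially a direct chain of the preceding results, so the plan is to string them together in the correct order. First, I would invoke Theorem \ref{T:tuplesmanf}, which is the technical heart of the section: it supplies, for every $k$-tuple $x = (x_1, \dots, x_k) \in X_f^k$, the conclusion that $x \in \IE^{\Lambda,k}_{(\sigma_i)_i}(2)$, i.e.\ $x$ is an $\ell^2$-independence tuple satisfying the weak containment condition. This is where all the actual work lives --- the choice of $J_i$ via the Bernoulli-shift embedding in Lemma \ref{L:Bernoulliembedding}, the perturbation $\phi_\varepsilon$ of $\phi = f^{-1}$ to rational coefficients, and the cancellation of constants using $t(\alpha_s) = 0$.

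Next, I would apply Proposition \ref{P:regularindependence} with $q = 2$ to pass from a weak-containment independence tuple to an ordinary $\ell^2$-independence tuple, yielding $x \in \IE^{k}_{(\sigma_i)_i}(2)$. Then Lemma \ref{L:whatq?} (independence of $\IE^k_{(\sigma_i)_i}(q)$ from the exponent $q$) gives $x \in \IE^{k}_{(\sigma_i)_i}(\infty)$.

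Finally, by the corollary following Lemma \ref{L:whatmetric?}, the set $\IE^k_{(\sigma_i)_i}(\infty)$ coincides with $\IE^k_{(\sigma_i)_i}$, the Kerr-Li notion of $(\sigma_i)_i$-IE-$k$-tuple. Hence $x \in \IE^k_{(\sigma_i)_i}(X_f, \Gamma)$, as desired. Since $x \in X_f^k$ was arbitrary, every $k$-tuple is a $(\sigma_i)_i$-IE-$k$-tuple.

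The main obstacle is entirely contained in Theorem \ref{T:tuplesmanf}, which has already been proved; the corollary itself is just the formal bookkeeping of assembling the chain $\IE^{\Lambda,k}_{(\sigma_i)_i}(2) \subseteq \IE^k_{(\sigma_i)_i}(2) = \IE^k_{(\sigma_i)_i}(\infty) = \IE^k_{(\sigma_i)_i}$. There are no new estimates or choices required at this stage.
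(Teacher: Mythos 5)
Your proof is correct and follows exactly the paper's route: Theorem \ref{T:tuplesmanf}, then Proposition \ref{P:regularindependence}, then the identification $\IE^k_{(\sigma_i)_i}(2) = \IE^k_{(\sigma_i)_i}(\infty) = \IE^k_{(\sigma_i)_i}$ via Lemma \ref{L:whatq?} and the corollary after Lemma \ref{L:whatmetric?}. The paper just compresses this to ``automatic from the preceding Theorem and Proposition \ref{P:regularindependence}''; you have merely spelled out the same bookkeeping.
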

\begin{proof}

Automatic from the preceding Theorem and Proposition \ref{P:regularindependence}.

\end{proof}

\begin{cor} Let $\Gamma$ be a countable discrete sofic group and $f\in M_{n}(\ZZ(\Gamma))\cap GL_{n}(L(\Gamma)).$ If $f$ is not in $GL_{n}(\ZZ(\Gamma)),$ then $\det_{L(\Gamma)}(f)>1.$

\end{cor}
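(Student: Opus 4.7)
The plan is to deduce this corollary from Corollary \ref{C:IEandregular} together with two external ingredients already identified in the paper: Proposition 4.16(3) of \cite{KerrLi2}, which says that $\Gamma\actson X$ has positive topological entropy with respect to $(\sigma_i)_i$ iff there exists a nondiagonal $(\sigma_i)_i$-IE-pair; and Theorem 4.4 of \cite{Me5}, which identifies $h_{(\sigma_i)_i}(X_f,\Gamma)$ with $\log\Det_{L(\Gamma)}(f)$ whenever $f\in GL_n(L(\Gamma))$.

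First I would check that the hypothesis forces $X_f$ to have at least two points. Since $f\in GL_n(L(\Gamma))$, the operator $r(f)$ is injective on $\ZZ(\Gamma)^{\oplus n}$, so the quotient $\ZZ(\Gamma)^{\oplus n}/r(f)\ZZ(\Gamma)^{\oplus n}$ vanishes iff $r(f)$ is surjective on $\ZZ(\Gamma)^{\oplus n}$; and $r(f)$ is surjective on $\ZZ(\Gamma)^{\oplus n}$ iff $f^{-1}\in M_n(\ZZ(\Gamma))$ (pull back the standard basis vectors of $\ZZ(\Gamma)^{\oplus n}$ and assemble their coordinates into a matrix). Under our hypothesis this quotient is therefore nontrivial, so by Pontryagin duality $X_f$ is a nontrivial compact abelian group.

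Pick $x\ne y$ in $X_f$. Then $(x,y)$ is a nondiagonal element of $X_f^2$, and by Corollary \ref{C:IEandregular} every pair of points in $X_f$ is a $(\sigma_i)_i$-IE-pair, so in particular $(x,y)\in \IE^2_{(\sigma_i)_i}(X_f,\Gamma)\setminus\{(z,z):z\in X_f\}$. Invoking Proposition 4.16(3) of \cite{KerrLi2} we conclude $h_{(\sigma_i)_i}(X_f,\Gamma)>0$. Finally, Theorem 4.4 of \cite{Me5} applies since $f\in GL_n(L(\Gamma))$ and gives $h_{(\sigma_i)_i}(X_f,\Gamma)=\log\Det_{L(\Gamma)}(f)$, so $\log\Det_{L(\Gamma)}(f)>0$, i.e.\ $\Det_{L(\Gamma)}(f)>1$.

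All the genuine work for this corollary has been packaged into Theorem \ref{T:tuplesmanf} and Corollary \ref{C:IEandregular}; the remaining argument is essentially bookkeeping. The only point that requires any thought is the equivalence in the first paragraph (nontriviality of the quotient module versus noninvertibility of $f$ in $M_n(\ZZ(\Gamma))$), and this is immediate from injectivity of $r(f)$ on $\ZZ(\Gamma)^{\oplus n}$, which in turn follows from $f\in GL_n(L(\Gamma))$.
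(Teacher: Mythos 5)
Your proof is correct and follows exactly the same route as the paper, which simply cites Corollary \ref{C:IEandregular}, Proposition 4.16(3) of \cite{KerrLi2}, and Theorem 4.4 of \cite{Me5} after observing that $X_f$ is nontrivial iff $f\notin GL_n(\ZZ(\Gamma))$. Your elaboration of the nontriviality equivalence is accurate (though the remark about injectivity of $r(f)$ is unnecessary: the quotient vanishes iff $r(f)$ is surjective regardless), and the remaining steps are the same bookkeeping the paper leaves implicit.
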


\begin{proof}

Observe that $X_{f}$ is not a single point if and only if $f\notin GL_{n}(\ZZ(\Gamma)).$ The corollary is then automatic from the preceding Corollary, Proposition 4.16 (3) in \cite{KerrLi2} and Theorem 4.4 in \cite{Me5}.

\end{proof}

In fact, we have the following more general result. Recall that if $\Gamma$ is sofic, if $\sigma_{i}\colon\Gamma\to S_{d_{i}}$ is a sofic approximation, an action $\Gamma\actson X$ on a compact metrizable space is said to have \emph{completely positive topological entropy} with respect to $(\sigma_{i})_{i}$ if whenever $\Gamma\actson Y$ is a nontrivial (i.e. not a one-point space) topological factor of $X,$ we have $h_{(\sigma_{i})_{i}}(Y,\Gamma)>0.$ The following Corollary was known for $f\in M_{n}(\ZZ(\Gamma))\cap GL_{n}(\ell^{1}(\Gamma)),$ by Proposition 4.16 (3),(5) and Theorem 6.7 of \cite{KerrLi2}. 

\begin{cor}\label{C:topcpe} Let $\Gamma$ be a countable discrete sofic group and $f\in M_{n}(\ZZ(\Gamma))\cap GL_{n}(L(\Gamma)).$ Suppose that $f$ is not in $GL_{n}(\ZZ(\Gamma)).$ Then for any sofic approximation $\sigma_{i}\colon\Gamma\to S_{d_{i}},$ the action $\Gamma\actson X_{f}$ has completely positive topological entropy with respect to $(\sigma_{i})_{i}.$
\end{cor}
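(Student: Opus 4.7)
The plan is to derive completely positive topological entropy for $\Gamma\actson X_f$ directly from Corollary \ref{C:IEandregular}, combined with the permanence properties of $\IE$-tuples under factor maps and Kerr--Li's characterization of positive entropy via nondiagonal independence pairs.

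First, I would fix a nontrivial topological factor $\pi\colon X_f\to Y$. Since $Y$ is not a single point, I can select two distinct points $y_1,y_2\in Y$, and, using surjectivity of $\pi$, lift them to points $x_1,x_2\in X_f$ with $\pi(x_j)=y_j$. By Corollary \ref{C:IEandregular} the pair $(x_1,x_2)$ is a $(\sigma_i)_i$-$\IE$-$2$-tuple in $X_f$ in the sense of Kerr--Li. Applying the fact that $\IE$-tuples push forward under factor maps (the analogue of Proposition \ref{P:permanence}(3) for the Kerr--Li $\IE$-tuples; this is straightforward from the definition, essentially because an approximating map on $X_f$ composes with $\pi$ to an approximating map on $Y$), I obtain that $(\pi(x_1),\pi(x_2))=(y_1,y_2)$ is an $\IE$-$2$-tuple for $\Gamma\actson Y$.

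The key observation is that because $y_1\neq y_2$, this pair is \emph{nondiagonal}. Then Proposition 4.16(3) of \cite{KerrLi2} immediately yields $h_{(\sigma_i)_i}(Y,\Gamma)>0$. Since the factor $Y$ was an arbitrary nontrivial topological factor of $X_f$, this is exactly the statement that $\Gamma\actson X_f$ has completely positive topological entropy with respect to $(\sigma_i)_i$.

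Once the preceding corollary is in hand, there is no real obstacle here; the only thing to verify is that $X_f$ itself is nontrivial, so that the assertion is not vacuous and so that factors can be nontrivial at all. But $X_f$ is a one-point space if and only if $\ZZ(\Gamma)^{\oplus n}=r(f)\ZZ(\Gamma)^{\oplus n}$, i.e.\ $f\in GL_n(\ZZ(\Gamma))$, which is excluded by hypothesis. Thus the proof reduces to a one-line invocation of Corollary \ref{C:IEandregular} together with standard properties of $\IE$-tuples from \cite{KerrLi2}.
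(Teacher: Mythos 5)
Your proof is correct and essentially parallels the paper's, with one small difference in the order of operations. The paper first pushes a nondiagonal pair forward to the factor $Y$ using its own Proposition \ref{P:permanence}(3) (preservation of $\Lambda$-IE-tuples under factor maps), and only then converts to an ordinary Kerr--Li IE-tuple via Proposition \ref{P:regularindependence}; you instead convert in $X_f$ first (via Corollary \ref{C:IEandregular}) and then push forward using Kerr--Li's own permanence result for IE-tuples (Proposition 4.16 in \cite{KerrLi2}). Both routes rest on the same ingredients and yield the conclusion immediately once Corollary \ref{C:IEandregular} is established; neither has a gap. Your remark that nontriviality of $X_f$ is exactly equivalent to $f\notin GL_n(\ZZ(\Gamma))$ is also the observation the paper itself makes in the proof of Theorem \ref{T:mainsection4}.
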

	
\begin{proof}
Automatic from Theorem \ref{T:tuplesmanf},  Proposition \ref{P:regularindependence}, Proposition 4.16 (3) in \cite{KerrLi2} and Proposition \ref{P:permanence}.

\end{proof}

Combining with results of Chung-Li we have the following result in the amenable case, which previously only known for $f\in GL_{n}(\ell^{1}(\Gamma))$ (see Corollary 8.4, Theorem 7.8 and Lemma 5.4 of \cite{ChungLi}).

\begin{cor}\label{C:cpeamenable} Let $\Gamma$ be a countable amenable group, and $f\in M_{n}(\ZZ(\Gamma))\cap GL_{n}(L(\Gamma)).$ Suppose that $f$ is not in $GL_{n}(\ZZ(\Gamma)).$ Then the action $\Gamma\actson X_{f}$ has completely positive measure-theoretic entropy (with respect to the Haar measure on $X_{f}$).
\end{cor}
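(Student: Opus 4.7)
The plan is to bootstrap from completely positive topological entropy to completely positive measure-theoretic entropy by following the template of Chung-Li in \cite{ChungLi}, with Corollary \ref{C:topcpe} taking the place of their $\ell^{1}$-based input. The $f \in GL_{n}(\ell^{1}(\Gamma))$ case in \cite{ChungLi} already goes through Corollary 8.4, Theorem 7.8, and Lemma 5.4 of that paper, so the only modification I would need is to remove the $\ell^{1}$-hypothesis at the starting point; the rest of the argument should proceed verbatim.

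First, I would apply Corollary \ref{C:topcpe} to conclude that $\Gamma\actson X_{f}$ has completely positive topological entropy with respect to any sofic approximation. Since $\Gamma$ is amenable, sofic entropy is independent of the sofic approximation and coincides with the Ornstein-Weiss entropy (by results of Kerr-Li and Bowen), so $\Gamma\actson X_{f}$ has completely positive topological entropy in the classical amenable sense. By definition of completely positive topological entropy, every nontrivial topological factor $\Gamma\actson X_{f}/Y$, for $Y \leq X_{f}$ a closed $\Gamma$-invariant subgroup with $Y \ne X_{f}$, has positive topological entropy.

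Second, I would invoke Chung-Li's Lemma 5.4 of \cite{ChungLi}, which (for amenable $\Gamma$) shows that every measure-theoretic factor of the measure-preserving action $\Gamma\actson (X_{f},\lambda_{X_{f}})$ is measurably isomorphic to an algebraic factor $\Gamma\actson (X_{f}/Y,\lambda_{X_{f}/Y})$ for some closed $\Gamma$-invariant subgroup $Y \leq X_{f}$. Third, I would combine this reduction with the identity between topological entropy and measure-theoretic entropy (with respect to Haar measure) for algebraic actions of amenable groups, as established in Theorem 7.8 and Corollary 8.4 of \cite{ChungLi} (extending classical work of Deninger-Schmidt and Li-Thom). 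The outcome is that every nontrivial measure-theoretic factor, being measurably conjugate to a nontrivial algebraic factor, has positive measure-theoretic entropy.

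The principal nontrivial inputs are the identification of measurable factors with algebraic factors (Chung-Li's Lemma 5.4) and the topological-equals-measure-theoretic entropy identity for algebraic actions of amenable groups; both are established in \cite{ChungLi} without any recourse to $\ell^{1}$-invertibility of $f$. Consequently, the only genuinely new ingredient required for the present corollary is Corollary \ref{C:topcpe}, which supplies the topological CPE starting point without the $\ell^{1}$-hypothesis. The obstacle, if any, is verifying that the derivations of these Chung-Li results do not secretly invoke invertibility in $\ell^{1}(\Gamma)$; inspection of their proofs shows they only rely on $f$ inducing an algebraic action with positive topological entropy on every factor, which is exactly what our generalized Corollary \ref{C:topcpe} provides.
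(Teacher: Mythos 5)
The paper's proof is a two-line deduction: Corollary \ref{C:IEandregular} shows every $k$-tuple of points in $X_{f}$ is a $(\sigma_{i})_{i}$-IE-$k$-tuple, so in particular the IE group of $X_{f}$ equals $X_{f};$ Corollary 8.4 of \cite{ChungLi} states that for an algebraic action of a countable amenable group, $\IE(X)=X$ implies completely positive measure entropy with respect to Haar measure. Your proposal instead routes through Corollary \ref{C:topcpe} and then attempts to transfer topological CPE to measure-theoretic CPE, which amounts to re-deriving Corollary 8.4 of \cite{ChungLi} rather than simply invoking it.

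The genuine gap is your statement of Lemma 5.4 of \cite{ChungLi}. You claim that every measure-theoretic factor of $\Gamma\actson(X_{f},\lambda_{X_{f}})$ is measurably isomorphic to an algebraic factor $\Gamma\actson(X_{f}/Y,\lambda_{X_{f}/Y}).$ This is false: an algebraic action with completely positive entropy is (at least in well-understood cases) measurably Bernoulli, and Bernoulli systems have an abundance of non-algebraic measurable factors. What is actually available, and what the proof of Corollary 8.4 in \cite{ChungLi} uses, is the much weaker fact that the \emph{Pinsker factor} of $(X_{f},\lambda_{X_{f}})$ is an algebraic factor $X_{f}/Y.$ Combined with the equality of topological and Haar-measure entropy for algebraic actions of amenable groups, one gets: $X_{f}/Y$ has zero topological entropy, hence by topological CPE is a point, hence the Pinsker factor is trivial. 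So your strategy is salvageable with the corrected version of the lemma, but as written the key step is not correct, and the overall route is considerably more involved than the paper's direct application of Corollary 8.4 of \cite{ChungLi} to the IE-group computation in Corollary \ref{C:IEandregular}.
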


\begin{proof} This follows from Corollary 8.4 of \cite{ChungLi} and Corollary \ref{C:IEandregular}.

\end{proof}
The preceding corollary was known in the amenable case when $f\in M_{n}(\ZZ(\Gamma))\cap GL_{n}(\ell^{1}(\Gamma))$ by combining   Proposition 4.16 (3),(5) and Theorem 6.7 of \cite{KerrLi2} with Corollary 8.4 of \cite{ChungLi}. As we mentioned, at the beginning of this section there are interesting examples in the amenable case of $f\in \ZZ(\Gamma)\cap L(\Gamma)^{\times}$ but $f\notin \ell^{1}(\Gamma)^{\times}.$ When $\Gamma$ is sofic, it would be interesting to decide if $\Gamma\actson X_{f}$ has completely positive measure-theoretic entropy with respect to every sofic approximation if $f\in M_{n}(\ZZ(\Gamma))\cap GL_{n}(L(\Gamma))$ is not invertible in $M_{n}(\ZZ(\Gamma)).$ 

\end{document}